\let\oldbf\bfseries
\renewcommand*{\bfseries}{\oldbf\mathversion{bold}\relax}
\let\oldnorm\normalfont
\renewcommand*{\normalfont}{\oldnorm\mathversion{normal}}
\let\oldrm\rmfamily
\renewcommand*{\rmfamily}{\oldrm\mathversion{normal}}
\newtheorem{thm}{Theorem}
\newtheorem{cor}[thm]{Corollary}
\newtheorem{lem}[thm]{Lemma}
\newtheorem{prop}[thm]{Proposition}
\newtheorem{conj}[thm]{Conjecture}
\newtheorem{ques}[thm]{Question}
\theoremstyle{definition}
\newtheorem{defn}[thm]{Definition}
\newtheorem{rem}[thm]{Remark}
\newtheorem{obs}[thm]{Observation}
\numberwithin{thm}{section}
\def\cC{\mathcal{C}}
\def\cE{\mathcal{E}}
\def\cL{\mathcal{L}}
\def\cU{\mathcal{U}}
\def\Ex{\mathbb{E}}
\def\H{\mathbb{H}}
\def\N{\mathbb{N}}
\def\Pr{\mathbb{P}}
\def\R{\mathbb{R}}
\def\bbT{\mathbb{T}}
\def\Z{\mathbb{Z}}
\def\1{\mathbbm{1}}
\def\<{\langle}
\def\>{\rangle}
\renewcommand{\leq}{\leqslant}
\renewcommand{\le}{\leqslant}
\renewcommand{\ge}{\geqslant}
\renewcommand{\to}{\rightarrow}
\newcommand{\pc}{{p_{\mathrm{c}}}}
\newcommand{\qc}{{q_{\mathrm{c}}}}
\newcommand{\qct}{{\tilde q_{\mathrm{c}}}}
\newcommand{\qcu}{{\qc(\cU)}}
\newcommand{\pcop}{{p_{\mathrm{c}}^{\mathrm{OP}}}}
\newcommand{\pcsp}{{p_{\mathrm{c}}^{\mathrm{SP}}}}
\definecolor{qqffqq}{rgb}{0,1,0}
\definecolor{ffqqqq}{rgb}{1,0,0}
\definecolor{cqcqcq}{rgb}{0.75,0.75,0.75}
\title{$\cU$-bootstrap percolation: critical probability, exponential decay and applications}
\author{Ivailo Hartarsky\thanks{Département de Mathématiques et Applications, École Normale Supérieure, PSL University, 45 rue d’Ulm, 75005 PARIS -- \texttt{ivailo.hartarsky@dauphine.psl.eu}}}
\date{\today}
\begin{document}
\maketitle
\begin{abstract}
Bootstrap percolation is a wide class of monotone cellular automata with random initial state. In this work we develop tools for studying in full generality one of the three `universality' classes of bootstrap percolation models in two dimensions, termed subcritical. We introduce the new notion of `critical densities' serving the role of `difficulties' for critical models~\cite{Bollobas14}, but adapted to subcritical ones. We characterise the critical probability in terms of these quantities and successfully apply this link to prove new and old results for concrete models such as DTBP and Spiral as well as a general non-trivial upper bound. Our approach establishes and exploits a tight connection between subcritical bootstrap percolation and a suitable generalisation of classical oriented percolation, which will undoubtedly be the source of more results and could provide an entry point for general percolationists to bootstrap percolation.

Furthermore, we prove that above a certain critical probability there is exponential decay of the probability of a one-arm event, while below it the event has positive probability and the expected infection time is infinite. We also identify this as the transition of the spectral gap and mean infection time of the corresponding kinetically constrained model. Finally, we essentially characterise the noise sensitivity properties at fixed density for the two natural one-arm events.

In doing so we answer fully or partially most of the open questions asked by Balister, Bollobás, Przykucki and Smith~\cite{Balister16}---namely we are concerned with their Questions 11, 12, 13, 14 and 17.
\end{abstract}
\smallskip
MSC: 60K35 (Primary), 60C05, 82B43, 82C22 (Secondary)\\
Keywords: bootstrap percolation, oriented percolation, kinetically constrained spin models
\newpage
\tableofcontents
\newpage
\section{Introduction}
\subsection{Background}
The bootstrap pecolation (BP) process is a deterministic monotone cellular automaton first introduced in 1979 by Chalupa, Leath and Reich~\cite{Chalupa79} (see \cite{Morris17} for a review). Given a set $A\subset \Z^d$ or $(\Z/n\Z)^d$ of initially infected vertices (sites), we declare more vertices to be infected on each (discrete) time step according to a local rule. For a given initial state $A$, we say that BP occurs if every vertex eventually becomes infected. In the first examples considered a site becomes infected if at least $r$ of its nearest neighbours are already infected. These models are motivated by several different facets of statistical physics (see e.g.\ \cites{Adler91,Adler03}). For instance, they can represent nucleation or excitation of a metastable material. Moreover, they are tightly related to the zero-temperature dynamics of the Ising model \cites{Fontes02, Morris11}, as well as kinetically constrained models for the liquid-glass transition \cites{Cancrini08,Hartarsky20FA}. In these applications and the vast majority of BP literature the initial set $A$ is chosen randomly according to a product Bernoulli measure with density of infections $q$, which we denote $\Pr_q$. A quantity of major interest for this model is its critical probability defined by
\[\qc=\inf\left\{q,\Pr_q([A]=\Z^d)\ge 1/2\right\}\]
on $\Z^2$ and similarly for other graphs. We denote the parameter $q$ instead of the standard $p$, as it will become clear that a more natural description of the model is in terms of a certain infinite `cluster' of healthy sites, whose density is $1-q$. We will use the term \emph{percolation} for a random subset of $\Z^2$ with law $\Pr_p$ for any $p$ without necessarily referring to a BP process. 

The first results on BP due to van Enter~\cite{VanEnter87} and Schonmann~\cite{Schonmann92} proved the triviality of the phase transition for all values of the parameters $r$ and $d$. However, Aizenmann and Lebowitz~\cite{Aizenman88} showed that when the dynamics is considered on a finite box $\{1,\dots,n\}^d$ instead of $\Z^d$, the critical probability scales like $\Theta\left((\log n)^{1-d}\right)$ for the nearest neighbour model with $d\ge r=2$. As it was noticed by Balogh and Bollobás~\cite{Balogh03} the phase transition is sharp owing to the general result of Friedgut and Kalai\cite{Friedgut96}. The position of the sharp threshold for $d=r=2$ was determined in a breakthrough of Holroyd~\cite{Holroyd03}. His results were then improved further and now the scaling of the second term of the critical probability is exactly known~\cites{Gravner08,Hartarsky19} in this setting. For $d\ge r>2$ the correct scaling was determined by Cerf and Cirillo~\cite{Cerf99} and Cerf and Manzo~\cite{Cerf02}. The corresponding sharp threshold was established by Balogh, Bollobás and Morris and the same authors together with Duminil-Copin \cites{Balogh09a,Balogh12}.

However, the methods of those works remained highly model-dependent, while many more models had been studied in the literature and some exhibited very different behaviour~\cites{Schonmann92,Mountford95,VanEnter07,Gravner96}. A relatively general classification was first attempted by Gravner and Griffeath~\cites{Gravner96,Gravner99}. It was much later substantially generalised, rectified and universality results were rigorously proved by Bollobás, Smith and Uzzell~\cite{Bollobas15} and Balister, Bollobás, Przykucki and Smith~\cite{Balister16}. It is this vast class of models that we introduce now. Although much of our work easily carries over to higher dimensions, we restrict ourselves to models on $\Z^2$, as the universality picture is currently only established in this setting.

\subsection{Models}
\label{subsec:models}
\paragraph{Bootstrap percolation} A BP model is parametrised by an \emph{update family}---a finite family $\cU$ of finite non-empty subsets of $\Z^2\setminus \{0\}$ called \emph{rules}. The initial set of infections $A=A_0$ is taken at random according to the product Bernoulli measure with parameter $q$, $\Pr_q$, and we define the evolution of the dynamics by
\[A_{t+1}=A_t\cup\{x\in\Z^2,\;\exists U\in\cU,\,x+U\subset A_t\},\]
so that a site becomes infected if any of the rules is entirely infected already. We denote by $[A]=\bigcup_{t\ge 0}A_t$ the \emph{closure} of $A$ and extend this notation to $A\subset\R^2$ by setting $[A]:=[A\cap\Z^2]$.

The result of \cites{Bollobas15,Balister16} is a partition of these models into three classes. The classification is based on the notion of stable directions---a direction $u\in S^1=\{x\in\R^2,\|x\|_2=1\}$ is \emph{unstable} if there exists $U\in\cU$ entirely contained in the half-plane $\H_u=\{x\in\R^2,\<x,u\><0\}$ and \emph{stable} otherwise, where $\<\cdot,\cdot\>$ denotes the canonical scalar product of $\R^2$. In terms of the BP process $u\in S^1$ is stable iff $[\H_u]=\H_u\cap\Z^2$ and unstable iff $[\H_u]=\Z^2$ (see \cite{Bollobas15}*{Lemma 3.1}). With this terminology BP models are classified as follows.
\begin{itemize}
\item \emph{Supercritical} if there exists an open semi-circle of unstable directions. In this case $\qc((\Z/n\Z)^2)=n^{-\Theta(1)}$ \cite{Bollobas15}.
\item \emph{Critical} if there exists a semi-circle with a finite number of stable directions, but it is not supercritical. In this case $\qc((\Z/n\Z)^2)=(\log n)^{-\Theta(1)}$ \cite{Bollobas15}.
\item \emph{Subcritical} otherwise, i.e.\ if every semi-circle contains infinitely many stable directions. In this case on $\Z^2$ we have $\qc>0$ \cite{Balister16}.
\end{itemize}
It is not hard to check (see \cite{Bollobas15}*{Theorem 1.10}) that, since the update family and rules are finite, the set of stable directions forms a finite union of closed intervals of $S^1$, with rational endpoints, a direction $u\in S^1$ being rational if there exists $x\in\Z^2$ such that $x=\lambda u$ for some $\lambda\in\R\setminus\{0\}$. In particular, the above classification is indeed equivalent to \cite{Bollobas15}*{Definition 1.3}.

Before discussing further results, let us give a few examples to digest these definitions. Turning back to the $r$-neighbour models, one can embed them in this setting by considering $\cU$ consisting of all $r$-element subsets of the set of the $4$ nearest neighbours of $0$. The case $r=1$ is supercritical, as it has no stable directions; $r=2$ is critical, as only the four lattice directions are stable; $r\in\{3,4\}$ are subcritical, as they have no unstable directions.

\paragraph{Trivial subcritical models} Models with no unstable directions, which  we call \emph{trivial subcritical models}, are not particularly relevant for us. It can be shown that they are exactly the models such that $\qc=1$ or, equivalently, such that there exist finite sets of healthy sites, which cannot be infected by the rest of $\Z^2$ \cite{Balister16}. Therefore, it is useful to introduce more interesting subcritical models, which will be investigated further in this work.

\paragraph{Oriented site percolation} \emph{OP} can be viewed as the BP model with $\cU=\{\{(1,1),(-1,1)\}\}$. It is easy to check (and was noticed already by Schonmann \cite{Schonmann92}) that $x\not\in[A]$ if and only if there exists an infinite oriented path (with North-East and North-West steps) of initially healthy sites. In particular $\qc$ for this model is equal to $1-\pcop$, where $\pcop$ is the usual critical probability of OP parametrised in terms of the density of healthy sites (this is one of the reasons for denoting our parameter $q$). Up to applying an invertible linear transformation to $\Z^2$, any family with one rule consisting of two non-collinear sites is equivalent to OP, so we will abusively also call them OP. Furthermore, one may consider \emph{bidirectional OP} with $\cU'=\{\{(1,1),(-1,1)\},\{(-1,-1),(1,-1)\}\}$, for which the surviving healthy sites are those initially belonging to a bi-infinite oriented path, so that the critical probability is again $1-\pcop$. OP is a very classical and well-understood model, for background on which we direct the reader to \cite{Durrett84} in addition to Section \ref{sec:OP}.

\paragraph{One-rule families and generalised oriented percolation} More generally, it is natural to consider all update families with only one rule, $\cU=\{U\}$. There are three types of them. If the origin is contained in the convex envelope of $U$, then $\cU$ is trivial subcritical. If the rule is contained in a half-line starting at the origin, the model is supercritical. All remaining one-rule families give rise to non-trivial subcritical models and their rule is contained in an open half plane. We refer to these models as \emph{generalised oriented site percolation} (GOP). Specific cases of these correspond to well-known probabilistic cellular automata (see \cite{Hartarsky20GOP}).

\paragraph{Spiral model} The \emph{Spiral model} of Toninelli, Biroli and Fisher \cite{Toninelli07a} is defined by $\cU=\{U_1,U_2,U_3,U_4\}$, where
\begin{equation}
\begin{aligned}
U_1&=\{(1,-1),(1,0),(1,1),(0,1)\} &U_2&=\{(1,-1),(1,0),(-1,-1),(0,-1)\}\\
U_3&=-U_1,  &U_4&=-U_2.
\end{aligned}
\label{eq:def:Spiral}
\end{equation}
This model was introduced to witness the somewhat surprising fact that subcritical BP can have a discontinuous phase transition in the sense that $\theta(\qc)=\Pr_{\qc}(0\not\in[A])>0$. This was established rigorously by Toninelli and Biroli \cite{Toninelli08} based on a close relationship with OP, which we will discuss further in Section \ref{sec:app}.

\paragraph{Directed triangular bootstrap percolation} \emph{DTBP} was introduced by Balister, Bollob\'as, Przykucki and Smith \cite{Balister16} as an example of a simple, but somewhat generic, subcritical model. Its main feature is its lack of symmetry and it should be viewed as a benchmarking example. It can be defined as $2$-neighbour BP on a directed triangular lattice, but can also be embedded in $\Z^2$ by
\begin{equation}
\cU=\{\{(1,0),(0,1)\},\{(1,0),(-1,-1)\},\{(0,1),(-1,-1)\}\}.
\label{eq:def:DTBP}
\end{equation}
As for most subcritical models essentially nothing is known about it. As a quantitative illustration of their result, the authors of \cite{Balister16} established that for DTBP
\[10^{-101}\le \qc\le 1-\pcop\le 0.3118,\]
invoking Gray, Weirman and Smythe \cite{Gray80} for the last inequality.

\paragraph{Kinetically constrained models} \emph{KCM} are stochastic generalisations of BP, although they were introduced independently to model the liquid-glass transition \cite{Fredrickson84}. A KCM is defined by an update family $\cU$ and a parameter $q$ as in BP. It is a Markov process on $\{0,1\}^{\Z^2}$ reversible w.r.t.\ $\Pr_q$ with the following graphical representation (see e.g.\ \cite{Liggett05} for background on interacting particle systems). We consider independent standard Poisson processes on each site and at each point of those processes the state of the corresponding site is resampled from its equilibrium Bernoulli measure with parameter $q$ if it would become infected on the next step in the BP process with the same update family and remains unchanged otherwise. Cancrini, Martinelli, Roberto and Toninelli~\cite{Cancrini08} proved that the critical probability of a KCM (above which $0$ is a simple eigenvalue of the Markov generator) is equal to $\qc$ for the corresponding BP. Furthermore they proved, using a general halving technique, that the spectral gap of the Markov generators of specific KCM considered in the physics literature is strictly positive (see e.g.\ \cite{Hartarsky19I} for a variational definition).

\paragraph{Supercritical models}
Certain classes of supercritical BP were studied extensively in the 90s by Gravner and Griffeath (see \cite{Gravner98}), mainly from the point of view of limit shapes and minimal percolating sets, which are usually finite for such models. Their KCM counterparts offer additional complexity, but were successfully studied in general by Mar\^ech\'e, Martinelli, Morris and Toninelli in different combinations \cites{Martinelli19a,Mareche20combi,Mareche20Duarte}.

\paragraph{Critical models}
For general critical BP the most notable results are due to Bollobás, Duminil-Copin, Morris and Smith~\cite{Bollobas14}. They introduced a notion of `difficulty' of a (topologically) isolated stable direction, counting the number of additional infections needed for an infected half-plane to grow and used it to determine the exact scaling (up to a constant factor) of $\qc((\Z/n\Z)^2)$ for all critical models. Although it will not be used directly, this notion is an important inspiration for our work. Sharper results generalising the one of Holroyd~\cite{Holroyd03} were also proved in a more restrictive but still somewhat general framework by Duminil-Copin and Holroyd~\cite{Duminil-Copin12}.

Very recently, an equaly complete understanding of critical KCM was achieved in a series of works by Marêché, Martinelli, Morris, Toninelli and the author in various combinations~\cites{Martinelli19a,Hartarsky19I, Hartarsky19II,Hartarsky20I, Hartarsky20II,Hartarsky20FA}, establishing a more complex behaviour.

\paragraph{Subcritical models} The focus of our work are the least understood of the three classes of update families---subcritical ones. For them the only result in full generality to date is the one of Balister, Bollob\'as, Przykucki and Smith~\cite{Balister16} stating that $\qc>0$. The technique behind it is a fairly involved multi-scale renormalisation, which has little hope of providing more results than what Peierls arguments give for standard percolation models (see Grimmett's classical monograph on percolation theory \cite{Grimmett99}*{Chapter 1}). We should note that computing the critical probability $\qc$ of subcritical models does not seem plausible even for the simplest one---OP.\footnote{To quote Grimmett \cite{Grimmett99} in the setting of standard percolation, but also valid e.g.\ for OP, ``It is highly unlikely that there exists a useful representation of $\pc$ [...] although such values may be computed with increasing degrees of accuracy with the aid of larger and faster computers.''}

\paragraph{Ordinary site percolation} Finally, \emph{SP} is one of the most classical percolation models (see \cite{Grimmett99}), which will also be useful for us, although it is not a particular case of BP. Similarly to OP, it consists in declaring each site of $\Z^2$ open independently with probability $p$ and looking for infinite paths of open sites with respect to the usual nearest neighbour graph structure of $\Z^2$ instead of the oriented one for OP. We denote $\pcsp$ the critical probability of appearance of such infinite paths.

\section{Definitions and notation}
\label{sec:defs}
In this section we gather most of the notation used throughout the article. We invite the reader familiar with percolation to skip ahead to Section \ref{sec:results} and go back to this section as needed. As some of the notions will be used relatively locally, let us indicate that the central notion of the present work is the one in Definition \ref{def:diff}.

\paragraph{Critical probability} Recall that $0<q<1$ is the density of infected sites and $\Pr_q$ is the associated Bernoulli product law of the random set $A\subset\Z^2$ and that $[\cdot]$ denotes the closure with respect to the BP process defined by a non-trivial update family $\cU$, that we keep implicit when there is no risk of confusion. Also, $B_x=[-x,x]^2\cap\Z^2$ for all $x\in[0,\infty)$. Define
\begin{align*}
\theta_n(q)&=\Pr_q(0\not\in[A\cap B_n]),\\
\theta(q)&=\lim_{n} \theta_n(q)=\Pr_{q}(0\not\in[A]).
\end{align*}
The critical probability is given by
\[\qc=\inf\left\{q\in[0,1],\Pr_q([A]=\Z^2)=1\right\}=\sup\left\{q,\theta(q)>0\right\},\]
the first equality following from ergodicity and the second one resulting from invariance by translation as for SP (see e.g.\ \cite{Grimmett99}). We also introduce another critical probability
\begin{equation}
\label{eq:def:pct}
\qct=\inf\left\{q\in[0,1],\,\sum_n n\theta_n(q)<\infty\right\},
\end{equation}
which is actually the only relevant one for our proofs, only noting that $\qct\ge\qc$. Several other equivalent definitions will be proved in Theorem~\ref{th:exp:decay}, so that $\qct$ is in particular the critical probability of exponential decay of $\theta_n(q)$. We emphasise that working with $\qct$ instead of $\qc$ will only lead to stronger results in applications.

\paragraph{Directions and half-planes} In order to define the central notion of this work, critical densities, we will need some conventions and notation concerning directions and half-planes, which will mostly follow previous authors. We identify the unit circle $S^1\subset\R^2$ with the torus $\R/2\pi\Z$ via \[(\cos\theta,\sin\theta)\longleftrightarrow \theta\mod 2\pi.\]
Despite the identification we shall preferentially use the letters $u,v$ for directions and $\theta$ for angles. 
For $n\in \N$ directions $u_1,\ldots u_n\in S^1$ we write $u_1<\ldots <u_n$ if one can find $\theta_1<\ldots<\theta_n<\theta_1+2\pi$ and $\theta$ in $\R$ such that for each $i$ we have $u_i\longleftrightarrow (\theta_i-\theta) \mod 2\pi$.

Recall that $\langle\cdot,\cdot\rangle$ and $S^1$ are the canonical scalar product on $\R^2$ and its unit sphere (circle). Furthermore, for $u\in S^1$ and $a\in\R$ set
\[\H_u^a =\{v\in\R^2, \<v,u\>< a\},\]
$\H_u=\H_u^0$ and $\H_u^{a+}=\{v\in\R^2,\<v,u\>\le a\}$. We denote by $V_{u,v}=\H_u\cap \H_v$ the cone defined by the directions $u,v\in S^1$. We also recall the standard notation $a\vee b=\max(a,b)$ and $a\wedge b=\min(a,b)$.

\paragraph{Critical densities} We are now ready to  introduce the new notion of `critical densities' adapted to subcritical BP (for critical and supercritical ones they will turn out to be identically $0$). Let us note that this is not an extension, but rather a complement, of the `difficulties' of~\cite{Bollobas14}, which are trivial for subcritical models.

Before we frighten the reader with the definition, let us say that the critical density in a direction $u$ is morally the critical probability of the model with infected boundary condition in $\H_u$. The definition we give differs from this one in two ways---it concerns the critical probability for certain decay of $\theta_n(q)$ and it is defined in a region whose shape approaches a half-plane. Nevertheless, this distinction will only be of major importance for Section~\ref{subsec:main}. That is because in applications we will always rely on simple OP-like models, in which we know that there is exponential decay above criticality and that the critical density is continuous in the shape of the region, so that the two notions coincide. Finally, we actually conjecture that they are \emph{always} equal. With this in mind, let us state the definition we shall use.
\begin{defn}
\label{def:diff}
For $u\in S^1$ and $\theta\in[-\pi,\pi]$ define
\[d_{u}^{\theta} = \inf\left\{q\in[0,1],\,\sum_n n\Pr_q\left(0\not\in [((A\cup V_{u,u+\theta}\right)\cap B_n)])<\infty\right\}.\]
Taking the (monotone) limit of this quantity, we set
\[d_{u}^{\pm}=\lim_{\theta\to 0\pm}d_u^\theta\]
and we call $d_{u}^-$ and $d_{u}^+$ the \emph{left and right critical densities} of $u$ respectively. The \emph{critical density} of $u$ is then given by $d_u=d_u^+\vee d_u^-$. We call $u\mapsto d_u$ the \emph{critical density function} of the model (of $\cU$).
\end{defn}

It is clear from the definition that this quantity is somewhat of the same complexity as $\qc$, so that it is not feasible to be able to compute the critical densities for all $u$ even for the simplest of subcritical models---OP.

The next observation directly follows from Definition \ref{def:diff}, but will be the base for our upper bounds on $\qc$.
\begin{obs}
\label{obs:crit:dens}
Let $\cU$ be an update family. Let $u\in S^1$ be a direction and $\cU'\subset\cU$ be a subfamily of rules. Then
\[d_u(\cU)\le d_u(\cU').\]
\end{obs}

\paragraph{One-arm events} Generally in percolation theory, a one-arm event is an event corresponding to `a point being connected to infinity' or its finite-size truncations. In BP there is one very natural infinite volume one-arm event---$\{0\not\in[A]\}$, which corresponds to the presence of an infinite cluster (set) of healthy sites ensuring the occurrence of the event. There are several natural ways to truncate this event. In particular, we have 
\[\{0\not\in[A]\}=\bigcap_n\{\tau_0\ge n\}=\bigcap_n\{0\not\in[A\cap B_n]\},\]
etc., where $\tau_0$ is the infection time of the origin. We interpret this event as $0\to\infty$ (0 `looks at' infinity) and its truncated version $\{0\not\in[A\cap B_n]\}$ as $0\to\partial B_n$ ($\partial$ stands for the boundary). In models involving some kind of directionality, like BP, one may need to distinguish between `point-to-infinity' and `infinity-to-point' and similarly for truncated versions. The second one, which we define next, turns out to be more tractable, albeit less natural.

For $n\in\N$ and $x\in B_n$ we denote the infection time of $x$ in $B_n$ with healthy boundary condition by
\[\tau_x^{B_n}=\inf\left\{t,\,x\in(A\cap B_n)^{B_n}_t\right\},\]
where the dynamics only affects the configuration in $B_n$.
\begin{defn}
\label{def:En}
Fix a large constant $C>0$ depending on $\cU$. Denote by $E_n\subset \{0,1\}^{B_n}$ the event that there exists an integer $N$ and a sequence $(x_i)_0^N$ of sites in $B_n$ such that
\begin{itemize}
\item $x_N$ is at distance at most $C$ from the boundary $\partial B_n$ of $B_n$.
\item $x_0=0$
\item $x_{i-1}\in x_i+X$ for all $1\le i\le N$, where $X=\bigcup_{U\in\cU} U$
\item $\tau^{B_n}_{x_i}\ge i$.
\end{itemize}
Also set $\tilde\theta_n(q)=\Pr_q(E_n)$ and $\tilde\theta(q)=\lim_n\tilde\theta_n(q)$.
\end{defn}
Note that the healthy boundary condition does not influence this event too much. Indeed, it is clear that some $x_i$ is close to $\partial B_{n/2}$, so the occurrence of $E_n$ implies the existence of a site `in the bulk' (far from the boundary) with large infection time. We will use this observation to obtain information on the distribution of the infection time $\tau_0$ below $\qct$.

The events $E_n$, which we interpret as $\partial B_n\to 0$, have the notable advantage of being `reflexive' in the sense that, when exploring a configuration to check if $E_n$ holds, looking back at the explored region from its boundary, one sees the event itself occurring in a smaller domain, which is crucial for the argument of Duminil-Copin, Raoufi and Tassion \cite{Duminil-Copin19} that we will use. Also very importantly, this event is defined in terms of a path rather than a `cluster', although it does require the existence of `clusters' of healthy sites. Of course, the main disappointment is that although very closely related to (and only differing by at most polynomial factors from) the natural events $\{0\not\in[A\cap B_n]\}$ or $\{\tau_0\ge n\}$, it does not allow us to prove that $\qct=\qc$, but only provides additional constraints on the phase $[\qc,\qct)$. The reason is that we may have $\bigcap_n E_n\neq \{0\not\in[A]\}$, meaning that in BP the `$0\to\infty$' and `$\infty\to0$' events are different.

\paragraph{Randomised algorithms and revealment}
We will need the natural notion of \emph{algorithm determining} a random variable $Y$ on $\Omega_0=\{0,1\}^{B_n}$ endowed e.g.\ with the measure $\Pr_q$. Roughly speaking, this is an algorithm which reveals the state of one bit (the value of $\omega_0\in\Omega_0$ on one site $x\in B_n$) at a time possibly depending on knowledge of the configuration already explored. It keeps exploring bits one at a time until the value of $Y$ is witnessed by the explored sites (determined regardless of the state of the remaining unexplored sites).

More formally, an algorithm is a rooted strict binary tree $T$ directed away from the root. Its internal nodes are labelled by sites of $B_n$ indicating the state of which site is being revealed. For each such internal node labelled by $x$, the two out-edges are labelled by the two possible values of the corresponding bit, so that given $\omega_0\in\Omega_0$, the algorithm with input $\omega_0$ continues along the edge labelled by $\omega_0(x)$. The leaves of the tree are labelled by the possible values of $Y$ (with repetition) indicating which value of $Y$ is witnessed (guaranteed) by the states indicated by the edges from the root to the leaf. More precisely, let $P_l$ denote the path from the root to a leaf $l$ labelled by a possible value $y$ of $Y$. Then the vertices of $P_l$ all have distinct labels (each site is revealed at most once) and for any $\omega_0\in\Omega_0$ such that for all internal nodes $v\in P_l$ we have $\omega_0(x_v)=\epsilon_v$ implies that $Y(\omega_0)=y$, where $x_v$ is the label of $v$ and $\epsilon_v$ is the label of the out-edge of $v$ belonging to $P_l$. Clearly, given an algorithm and an input $\omega_0\in\Omega_0$, there exists a unique leaf $l_{\omega_0}$ such that for every internal node in $v\in P_{l_{\omega_0}}$ we have $\omega_0(x_v)=\epsilon_v$. This simply corresponds to what the algorithm actually does for the specific realisation of the random input---which sites it checks, in what order, what values it finds for their states and, finally, what value of the random variable $Y$ it determines based on those states. 

A \emph{randomised algorithm} is an algorithm-valued random variable. As we will apply these algorithms to inputs which are random themselves, we need to define them on a common probability space $(\Omega,\Pr)$, so that the random algorithm is independent from the random input. For a randomised algorithm define its \emph{maximal revealment}
\[\delta=\max_{x\in B_n}\Pr(\exists v\in P_{l_{\omega_0}}, x_v=x),\]
i.e.\ the maximal probability that any fixed site is explored by the algorithm.

\paragraph{Noise sensitivity}
We next define noise sensitivity, although our proofs will mostly use black-box theorems based on Fourier analysis instead of the definition.\begin{defn}
\label{def:noise}
Let $G_n\subset\{0,1\}^{B_n}$ be a sequence of events. For every $\omega_0\in\{0,1\}^{B_n}$ let $N_\varepsilon(\omega_0)$ be the configuration obtained when each bit of $\omega_0$ is resampled independently with probability $\varepsilon$ and unchanged otherwise. Resampled bits are taken to be independently infected with probability $q$ as originally.

We say that the sequence $G_n$ is \emph{noise sensitive}, if for every $\varepsilon>0$
\[\lim_{n\to\infty}\frac{Cov\left(\1_{\omega_0\in G_n},\1_{N_\varepsilon(\omega_0)\in G_n}\right)}{Var(\1_{G_n})}=0.\]
\end{defn}
Let us note that this definition following \cite{Bartha15} is stronger than the original one from~\cite{Benjamini99}, which is trivial for events with probabilities tending to $0$ and equivalent, if the probabilities are bounded away from $0$.

\section{Results}
\label{sec:results}
Our goal is to provide a toolbox for studying subcritical models in full generality. Although our results will apply also to supercritical and critical models, most of them are either empty or relatively easy for such families. Unless explicitly mentioned we do not consider trivial subcritical models.

\paragraph{Critical densities and upper bounds on $\qc$}
Let $\cC=\{[u,u+\pi],u\in S^1\}$ be the set of closed semi-circles of $S^1$. The most central result of our work is the following directional decomposition of the critical probability.
\begin{thm}
\label{th:general}
Let $\cU$ be any update family. Then
\begin{equation}
\label{eq:qct:decomposition}
\qct=\sup_{u\in S^1}d_u=\inf_{C\in\cC}\sup_{u\in C}d_u.
\end{equation}
If $\cU$ is not subcritical, then $\qct=0$.
\end{thm}

Combining Theorem~\ref{th:general} with Observation \ref{obs:crit:dens}, we obtain the following upper bound on $\qc$.
\begin{cor}
\label{cor:upper:bound}
Let $\cU$ be an update family. Then for any set of subfamilies $\cU_i\subset\cU$ we have
\[\qc(\cU)\le\qct(\cU)\le \inf_{C\in\cC}\sup_{u\in C}\min_{i}d_u(\cU_i).\]
\end{cor}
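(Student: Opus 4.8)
The plan is to chain the Observation with the characterisation of $\qct$ from Theorem~\ref{th:general}; the corollary has no content beyond these two inputs. First I would fix a direction $u\in S^1$: by the Observation applied to each subfamily $\cU_i\subset\cU$ we have $d_u(\cU)\le d_u(\cU_i)$, and since this holds for every index $i$ simultaneously it gives the pointwise bound $d_u(\cU)\le\min_i d_u(\cU_i)$ (with $\min$ interpreted as $\inf$ should the index set be infinite). Next I would push this inequality through the two optimisations in the statement. For a fixed closed semi-circle $C\in\cC$, taking the supremum over $u\in C$ yields $\sup_{u\in C}d_u(\cU)\le\sup_{u\in C}\min_i d_u(\cU_i)$, and then taking the infimum over $C\in\cC$ yields $\inf_{C\in\cC}\sup_{u\in C}d_u(\cU)\le\inf_{C\in\cC}\sup_{u\in C}\min_i d_u(\cU_i)$. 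All quantities lie in $[0,1]$, so no existence issue arises, and by the second equality in Theorem~\ref{th:general} the left-hand side is exactly $\qct$.

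It then remains to prepend $\qc\le\qct$. This is part of the information about $\qct$ gathered in Theorem~\ref{th:exp:decay}: for $q>\qct$ there is exponential decay of the one-arm event, so the infection time of the origin is finite almost surely, i.e. $\Pr_q(0\in[A])=1$; by translation invariance of the measure and of the dynamics, $\Pr_q(x\in[A])=1$ for every $x\in\Z^2$, hence $[A]=\Z^2$ almost surely and $q\ge\qc$. Letting $q\downarrow\qct$ gives $\qc\le\qct$. Concatenating the three inequalities produces the displayed bound (this holds whether or not $\cU$ is subcritical; when it is not, $\qct=0$ by Theorem~\ref{th:general} and the bound is immediate from $\qc\le\qct$).

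There is no genuine obstacle here — the substance is entirely in Theorem~\ref{th:general} and the Observation, both already available. The only point worth a sentence in the write-up is that the order of the three operators $\min_i$, $\sup_{u\in C}$ and $\inf_{C\in\cC}$ is immaterial for us: we only use that each of them is monotone and therefore preserves the pointwise inequality $d_u(\cU)\le\min_i d_u(\cU_i)$, so the estimate remains valid for an arbitrary, possibly infinite, collection $\{\cU_i\}$.
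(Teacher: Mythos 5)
Your proof is correct and matches the paper's intent: the paper gives no explicit proof, simply noting that the corollary follows by combining the Observation with Theorem~\ref{th:general}, which is precisely your chain of pointwise inequality $d_u(\cU)\le\min_i d_u(\cU_i)$ pushed through $\sup_{u\in C}$ and $\inf_{C\in\cC}$. One small remark: your detour through Theorem~\ref{th:exp:decay} to obtain $\qc\le\qct$ is heavier than needed and creates an unnecessary forward reference; the inequality already follows directly from the definitions, since for $q<\qc$ one has $\theta(q)>0$ and hence $\theta_n(q)\ge\theta(q)>0$ for all $n$, so $\sum_n n\theta_n(q)=\infty$ and $q\le\qct$ (the paper indeed just records this fact in Section~\ref{sec:defs}).
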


\paragraph{Critical densities of OP}
In order to make use of Corollary~\ref{cor:upper:bound} and obtain a concrete non-trivial upper bound in relative generality, we express the critical densities of OP in terms of a classical quantity called `edge speed'. This is done in Section \ref{sec:OP} by combining many standard facts about OP recalled there together with the definition of the `edge speed'.

\paragraph{Application to DTBP}
Though simple, the bound in Corollary \ref{cor:upper:bound} is very versatile and can lead to non-trivial results for the right choice of subfamilies we have information for. Of course, in some cases it will reduce to the trivial bound $\qc(\cU)\le\min_{U\in\cU} q_{\mathrm{c}}(\{U\})$ (since it is sometimes sharp already), which has not been brought up explicitly in the literature, but was mentioned for DTBP in \cite{Balister16}, taking only $\cU_1=\{U\}$ for some rule $U\in\cU$ (they are all isomorphic). There it was observed that $\qc\le 1-\pcop<0.312$, the second inequality being due to Gray, Weirman and Smythe \cite{Gray80}. 

As an exemplary application of our result, we improve this bound on DTBP, answering Question~17 of \cite{Balister16} (of course, the question may now be reiterated). We prove the following by combining Corollary~\ref{cor:upper:bound}, the expression of critical densities of OP and a variant of the argument from \cite{Gray80}.
\begin{thm}
\label{th:BBPS:family}
For DTBP
\[\qc\le\qct\le d_{\arctan(-1/3)}^{\mathrm OP}< 0.2452,\]
where $d^{\mathrm OP}$ is the critical density of OP.
\end{thm}

\paragraph{Application to Spiral}
Another application concerns the Spiral model. For that model Toninelli and Biroli \cite{Toninelli08} proved that $\qc=1-\pcop$, there is exponential decay for $q>\qc$ and its transition is discontinuous, as well as providing bounds on the exponentially diverging correlation length. It turns out that our method exactly recovers the first two assertions, giving a new proof of the following.
\begin{thm}[Theorem 3.3. of \cite{Toninelli08}]
\label{th:Spiral}
For the Spiral model $\qc=\qct=1-\pcop$.
\end{thm}
This is a consequence of Corollary~\ref{cor:upper:bound} together with an adaptation of a straightforward but fundamental lemma from \cite{Toninelli08}, which inputs a crucial feature of the model identified by Jeng and Schwarz~\cite{Jeng08}.

\paragraph{Exponential decay}
In the proof of Theorem~\ref{th:general} we actually prove that $\theta_n(q)$ decays exponentially fast in $n$ for $q>\qct$. We provide a second proof of this fact, which also gives additional information on the phase $q<\qct$.
\begin{thm}
\label{th:exp:decay}
Recalling Definition \ref{def:En}, for any update family the following holds.
\begin{itemize}
\item If $q>\qct$, then there exists $c(q)>0$ such that
\[\max\left(\theta_n(q),\tilde\theta_n(q)\right)\le \exp(-c(q).n).\]
\item There exists $c>0$ such that for $q<\qct$
\[\tilde\theta(q)\ge c.(\qct-q)>0.\]
\item If $q<\qct$, then there exists $c(q)>0$ such that
\[\Pr_q(\tau_0>n)\ge c(q)/n\]
and in particular $\Ex_q[\tau_0]=\infty$.
\end{itemize}
\end{thm}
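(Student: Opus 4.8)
The first bullet is essentially a restatement of what Theorem~\ref{th:general} already gives for $\theta_n$, augmented to also cover $\tilde\theta_n$. The plan is to observe that the characterisation $\qct=\inf_{C\in\cC}\sup_{u\in C}d_u$ means that for $q>\qct$ there is a closed semi-circle $C$ with $\sup_{u\in C}d_u<q$; translating this into the oriented-percolation picture developed for Theorem~\ref{th:general}, one obtains that a suitable ``blocked'' structure (a crossing of healthy sites that cannot be repaired by the dynamics) exists with probability bounded below, and the one-arm events $\theta_n$, $\tilde\theta_n$ are then controlled by a standard Peierls/renormalisation argument on the generalised oriented percolation associated to $C$: the probability that the origin fails to be infected from $B_n$, or that $\partial B_n$ fails to infect the origin, is at most the probability that a certain oriented path of length $\Theta(n)$ is entirely healthy (or entirely blocked), which is exponentially small once we are strictly inside the subcritical-for-repair regime. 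The only new ingredient relative to Theorem~\ref{th:general} is checking that the \emph{same} semi-circle and the \emph{same} exponential estimate apply to $E_n$ as to $\{0\notin[A\cap B_n]\}$; since $E_n$ is defined (Definition~\ref{def:En}) precisely as the dual/reversed one-arm event, this is a matter of running the contour argument with the orientation reversed, which the symmetry of the construction permits.

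For the second bullet, the plan is to run the argument for $q<\qct$ in the opposite direction. By Theorem~\ref{th:general}, $q<\qct$ forces $d_u>q$ for some direction $u$, hence (again via the translation to GOP) the existence of a half-plane-like configuration directed by $u$ that the healthy sites can sustain: informally, along direction $u$ the density $q$ is below the critical density, so an infinite healthy ``barrier'' survives with positive probability. I would make this quantitative by a sprinkling / finite-energy comparison: the event $E_\infty$ contains an event of the form ``there is an infinite blocked path in the GOP associated to $u$'', whose probability is positive for $q<d_u$ and, by the strict monotonicity and (near-)linear behaviour of the GOP order parameter near its critical point — exactly the input we imported when computing the critical densities of OP in Theorem~\ref{th:diff:OP} — is bounded below by $c(d_u-q)\ge c(\qct-q)$ after optimising over $u$. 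The linear lower bound $\tilde\theta(q)\ge c(\qct-q)$ is thus inherited from the corresponding linear bound for oriented percolation's percolation probability just below its threshold; the main point to be careful about is that the comparison to GOP is two-sided and uniform, so that the constant $c$ does not degenerate as $q\uparrow\qct$.

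For the third bullet, the idea is a classical one: when $q<\qct$, the process ``started from the half-plane'' fails to fill the plane with positive probability, and one extracts from this a polynomial lower bound on $\Pr_q(\tau_0>n)$ by a renewal/restart argument. Concretely, I would condition on the event (of positive probability $p_0$, by the second bullet applied on a suitable scale) that within a box $B_m$ there is a blocked configuration protecting the origin from infection coming from outside $B_m$ \emph{unless} that outside infection has already developed; then $\{\tau_0>n\}$ contains the event that in each of $\sim n/m^2$ disjoint annuli there is such a local obstruction, but this naive bound is exponentially small, so instead one uses the standard trick that $\{\tau_0>n\}\supseteq\{$the origin is protected until the infection reaches distance $\sqrt n$, which takes time $\ge\sqrt n$ with probability $\ge c\}$, combined with $\Pr_q(0\notin[A])>0$; summing $\Pr_q(\tau_0>n)$ and using $\Pr_q(0\notin[A])=\lim_n\Pr_q(\tau_0>n)>0$ would only give a constant, so the $1/n$ comes from a second-moment or ergodic averaging argument over the $n$ possible ``first times the origin could have been infected''. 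In fact the cleanest route is: since $\Pr_q(\tau_0=\infty)=0$ is \emph{false} here ($\Pr_q(\tau_0=\infty)>0$ by the second bullet), we have $\Ex_q[\tau_0]=\infty$ immediately once we know $\Pr_q(\tau_0=\infty)>0$, and the $1/n$ tail bound is then a soft consequence via a spatial averaging argument exploiting translation invariance and the fact that if the origin is infected at time $n$ then some site within distance $n$ was infected at time $0$ and ``propagated'', giving $n\cdot\Pr_q(\tau_0\le n)\le$ (expected number of origins of infection paths reaching $0$) which, bounded by subadditivity, forces $\Pr_q(\tau_0>n)\ge c(q)/n$.

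\medskip
The main obstacle I expect is the \emph{lower} bound in the second bullet, specifically the claim that the constant $c$ in $\tilde\theta(q)\ge c(\qct-q)$ is genuinely uniform. This requires that the near-critical linear bound for generalised oriented percolation hold with a constant uniform over the relevant one-parameter family of directions $u$ achieving (or nearly achieving) the supremum in $\qct=\sup_u d_u$, and that the translation between bootstrap percolation and GOP does not distort densities by a $u$-dependent amount that blows up near the optimal direction. Controlling this uniformity — essentially a compactness-plus-continuity statement about the map $u\mapsto d_u$ and about the GOP comparison, drawing on the continuity and strict monotonicity facts assembled for Theorem~\ref{th:diff:OP} — is where the real work lies; the exponential-decay direction and the $\Ex_q[\tau_0]=\infty$ conclusion are comparatively routine given Theorems~\ref{th:general} and~\ref{th:diff:OP}.
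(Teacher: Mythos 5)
Your proposal takes a route that is genuinely different from the paper's, and unfortunately each of the three bullets has a substantive gap. The paper's proof rests entirely on the OSSS/randomised-algorithm method of Duminil-Copin, Raoufi and Tassion~\cite{Duminil19}: the only new ingredient is Lemma~\ref{lem:exp:decay}, constructing a randomised algorithm determining $\1_{E_n}$ with revealment $\le \frac{3}{n-1}\sum_{k<n}\tilde\theta_k$. Plugged into Russo's formula together with the OSSS inequality, this yields both exponential decay of $\tilde\theta_n$ above $\hat\qc$ \emph{and} the mean-field linear lower bound $\tilde\theta(q)\ge c(\hat\qc-q)$ below it. The remaining work is the identification $\hat\qc=\qct$ via the two comparisons $\theta_n(q)\le Cn\tilde\theta_{n/3}(q)$ and $\tilde\theta_n(q)\le Cn\theta_{\sqrt n/(2C)}(q)$. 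Your proposal does not use this machinery at all, and the alternatives you sketch do not close.

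For the first bullet, you propose to extend the renormalisation/Peierls argument underlying Theorem~\ref{th:general} to $\tilde\theta_n$ by ``reversing the orientation''. But the only comparison available in the paper between the two one-arm events is $\tilde\theta_n(q)\le Cn\,\theta_{\sqrt n/(2C)}(q)$ (the $E_n$ event forces a site with infection time $\gtrsim n$ to exist, hence a ball of radius only $\Theta(\sqrt n)$ to fail to infect it, since the dynamics advances one site per step inside a box). Exponential decay of $\theta_m$ thus gives only a stretched-exponential bound $\tilde\theta_n\lesssim n\exp(-c\sqrt n)$, which falls short of the statement. The paper gets exponential decay of $\tilde\theta_n$ directly from OSSS and then deduces $\theta_n$'s decay via $\theta_n\le Cn\tilde\theta_{n/3}$, which does respect exponential rates. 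Reversing the renormalisation is not obviously meaningful either: $E_n$ is defined via a sequence of sites with large infection times in $B_n$, and the droplet-growth renormalisation does not naturally produce a contour controlling this event.

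For the second bullet, the proposed comparison to a ``GOP associated to $u$'' does not exist for a general subcritical $\cU$. The critical density $d_u$ of Definition~\ref{def:diff} is an intrinsic quantity of $\cU$ and is not computed via any one-rule family; the GOP picture in Section~\ref{sec:OP} applies only to $\cU$ \emph{itself} being a single rule. Hence there is no near-critical linear order-parameter estimate available to import, and the uniformity problem you flag is moot: the basic comparison is absent. In the paper this bullet costs nothing — the mean-field lower bound is exactly the extra output of the OSSS method.

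For the third bullet, your proposal contains an error: you write that $\Pr_q(\tau_0=\infty)>0$ follows from the second bullet, but the second bullet controls $\tilde\theta(q)$, not $\theta(q)$. Since the paper cannot prove $\qc=\qct$ (this is Conjecture~\ref{conj:qc:qct}), for $q\in[\qc,\qct)$ one would have $\Pr_q(\tau_0=\infty)=0$ yet still $\Ex_q[\tau_0]=\infty$; this is precisely what makes the third bullet non-trivial and what the paper highlights as ``a consolation if $\qc\neq\qct$''. The paper instead argues directly: on $E_n$ there is a site $x$ within distance $C/4$ of $\partial B_{n/2}$ with $\tau_x^{B_n}\ge n/C$; since $\{\tau_x\ge n/C\}$ is determined in $B_n$ and is translation-invariant, the union bound gives $Cn\,\Pr_q(\tau_0\ge n/C)\ge\tilde\theta_n(q)\to\tilde\theta(q)>0$, whence $\Pr_q(\tau_0>n)\ge c(q)/n$.
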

Although we expect that $\qc=\qct$, this implies that if $\qc\neq\qct$, then the expected infection time is infinite at $\qc$ (Question~11 of~\cite{Balister16}).

The proof relies heavily on the new simple but powerful method of Duminil-Copin, Raoufi and Tassion~\cite{Duminil-Copin19} based on randomised algorithms. With some additional work on their only model-dependent Lemma~3.2, somewhat surprisingly the technique applies to BP, which is a rather unconventional setting for such arguments from SP.

Finally, we answer Question~12 of~\cite{Balister16} on exponential decay for $q<\qc$ in the negative and provide satisfactory information concerning Question~14 of the same paper on the relationship between BP and SP.

\paragraph{Noise sensitivity}
Exploiting the algorithm we devise in order to prove Theorem \ref{th:exp:decay}, we obtain the following relatively complete information about noise sensitivity.
\begin{thm}\label{th:noise}
Recalling Definition \ref{def:En}, for any update family and any $q\in(0,1)$ the following hold.
\begin{itemize}
\item $\tilde\theta(q)=0$ if and only if the events $E_n$ are noise sensitive and if and only if there is an algorithm with vanishing revealment determining their occurrence.
\item If $\theta(q)>0$, then the events $\{0\not\in[A\cap B_n]\}$ are not noise sensitive.
\item If $\theta(q)=\tilde\theta(q)=0$, then the events $\{0\not\in[A\cap B_n]\}$ are noise sensitive and there is an algorithm with vanishing revealment determining their occurrence.
\end{itemize}
\end{thm}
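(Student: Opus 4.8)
\emph{Overall strategy.} Theorem~\ref{th:noise} consists of a ``positive'' part --- noise sensitivity, together with the existence of a vanishing-revealment algorithm --- and a ``negative'' part --- failure of noise sensitivity --- and the two rely on entirely disjoint ingredients. The positive part is obtained by feeding the randomised algorithm constructed in Section~\ref{subsec:exp} into the revealment criterion of Schramm and Steif~\cite{Schramm10}: if a sequence of Boolean functions $f_n$ is computed by a randomised algorithm of revealment $\delta_n$, then for every fixed frequency cutoff the low-frequency Fourier weight of $f_n$ is at most a constant times $\delta_n\,\Ex_q[f_n^2]$; combined with the Benjamini--Kalai--Schramm~\cite{Benjamini99} description of noise sensitivity as the escape of the Fourier spectrum to infinity, this says precisely that $f_n$ is noise sensitive in the sense of Definition~\ref{def:noise} as soon as $\delta_n\to0$, and this remains meaningful when $\Ex_q[f_n]\to0$. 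The negative part uses only the elementary fact that a monotone sequence of events whose probabilities converge to some $p\in(0,1)$ is never noise sensitive.

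\emph{Positive part.} The single analytic input needed is the following: for every $q$ there is a randomised algorithm determining $E_n$ (resp.\ $\{0\notin[A\cap B_n]\}$) whose revealment tends to $0$ exactly when $\tilde\theta_n(q)\to0$ (resp.\ when both $\theta_n(q)\to0$ and $\tilde\theta_n(q)\to0$). For $q>\qct$ this is essentially already contained in Section~\ref{subsec:exp}, where the revealment is bounded by a quantity that manifestly vanishes whenever the relevant one-arm probability does; what has to be checked is that this persists at a continuous critical point $q=\qct$, where the one-arm probability decays only polynomially --- possibly after slightly modifying the exploration. For $\{0\notin[A\cap B_n]\}$ one must also observe that the natural algorithm explores \emph{both} outward from $0$ and inward from $\partial B_n$, so that its revealment is governed simultaneously by the two one-arm probabilities, which is why the joint hypothesis $\theta(q)=\tilde\theta(q)=0$ is the right one. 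Granting this, Schramm--Steif directly yields that the $E_n$ are noise sensitive when $\tilde\theta(q)=0$ and that $\{0\notin[A\cap B_n]\}$ is noise sensitive when $\theta(q)=\tilde\theta(q)=0$; this is the content of the forward implications of the first bullet and of the third bullet.

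\emph{Negative part.} Let $(A_n)$ be a monotone sequence of events, $A_n$ depending only on the coordinates of $B_n$, with $\Pr_q(A_n)\to p\in(0,1)$, and set $f_n=\1_{A_n}$. If the sequence is decreasing, put $f_\infty=\1_{\bigcap_kA_k}\in L^2$; then $f_n\ge f_\infty\ge0$ pointwise, and for $\omega^\varepsilon$ resampling only the coordinates of $B_n$,
\[
\Ex_q\!\left[f_n(\omega)f_n(\omega^\varepsilon)\right]\ \ge\ \Ex_q\!\left[f_\infty(\omega)f_\infty(\omega^\varepsilon)\right]\ =\ \sum_{S}\widehat{f_\infty}(S)^2(1-\varepsilon)^{|S\cap B_n|}\ \ge\ \sum_{S}\widehat{f_\infty}(S)^2(1-\varepsilon)^{|S|}\,,
\]
and the last sum converges to $\Pr_q(\bigcap_kA_k)=p$ as $\varepsilon\to0$. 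Choosing $\varepsilon$ so that the last sum exceeds $p^2+p(1-p)/2$ (possible since $p<1$), we obtain $\liminf_n\big(\Ex_q[f_n(\omega)f_n(\omega^\varepsilon)]-\Pr_q(A_n)^2\big)>0$, so $(A_n)$ is not noise sensitive; the increasing case is identical after replacing $f_\infty$ by a fixed member $f_m$ with $\Pr_q(A_m)$ close to $p$ and using $f_n\ge f_m$ for $n\ge m$. Applied to $\{0\notin[A\cap B_n]\}$, which decreases to $\{0\notin[A]\}$ with $p=\theta(q)\le1-q<1$ when $\theta(q)>0$, this proves the second bullet. Applied to $(E_n)$, monotone with limiting probability $\tilde\theta(q)\in(0,1)$ when $\tilde\theta(q)>0$, it shows the $E_n$ are not noise sensitive, hence, by the Schramm--Steif implication above, cannot be determined by an algorithm of vanishing revealment; together with the positive part this closes the chain of equivalences in the first bullet.

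\emph{Main obstacle.} Everything above is routine once the revealment estimate isolated in the positive part is available, so the crux is precisely that estimate: verifying that the algorithm of Section~\ref{subsec:exp} for $\{0\notin[A\cap B_n]\}$ has revealment jointly controlled by $\theta_m$ and $\tilde\theta_m$, and that the revealment bound still vanishes at a continuous critical point despite the merely polynomial decay of the one-arm probabilities there. The passage from the Schramm--Steif Fourier bound to Definition~\ref{def:noise} (which for probabilities bounded away from $0$ and $1$ is the classical notion, and for probabilities tending to $0$ is its standard normalised form) and the whole negative part are straightforward.
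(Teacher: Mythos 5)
Your treatment of the negative part is correct and uses a genuinely different argument from the paper: you expand the noise correlation $\Ex_q[f_n(\omega)f_n(\omega^\varepsilon)]$ in the biased Fourier basis, use pointwise monotonicity $f_n\ge f_\infty$ to drop to the limit indicator, and send $\varepsilon\to0$. The paper's Lemma~\ref{lem:Pete} instead works purely in $L^2$, via the $L^2$ contraction property of the noise operator and the fact that $\1_{G_n}\to\1_{G_\infty}$ in $L^2$, stopping at a fixed cylinder $G_{n_\delta}$ before letting $\varepsilon\to0$. Both routes are valid; yours is arguably more quantitative (it gives an explicit correlation lower bound), the paper's avoids Fourier altogether. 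The forward direction of the first bullet is also fine and matches the paper: Lemma~\ref{lem:exp:decay} gives revealment $\frac{3}{n-1}\sum_{k<n}\tilde\theta_k(q)$, which vanishes by Ces\`aro whenever $\tilde\theta_n(q)\to0$, and Theorem~\ref{th:SS} closes the implication chain. (Your worry about the ``merely polynomial decay'' at $q=\qct$ is moot --- the Ces\`aro bound needs nothing beyond $\tilde\theta_n\to0$.)

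The genuine gap is the third bullet, which you yourself flag as ``the crux''. Your sketch --- a ``natural algorithm explores both outward from $0$ and inward from $\partial B_n$, so that its revealment is governed simultaneously by the two one-arm probabilities'' --- is not what the paper does, and it is not clear it can be made to work: the event $\{0\notin[A\cap B_n]\}$ lacks the reflexive structure that makes the Duminil-Copin--Raoufi--Tassion/Schramm--Steif exploration tractable, so a naive outward exploration from $0$ has no good revealment bound. The paper's actual construction is different and more delicate: run \emph{only} the $E_k$-style inward exploration of Lemma~\ref{lem:exp:decay}, but from a radius $k$ chosen uniformly in a window $[3k_0,4k_0)$, with $k_0$ tuned so that $k_0\,\Pr_q(\tau_0\ge n/C)<\varepsilon$ and $\frac{1}{k_0}\sum_{m\le 2k_0}\tilde\theta_m(q)<\varepsilon$. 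After the exploration one either certifies $0\in[A\cap B_n]$ (if the event $H_k$ --- some site near $\partial B_k$ having $\tau^{B_n}>n/C$ --- has failed) or, on the rare event $H_k$ of probability $<\varepsilon$, reveals everything. The role of $\theta(q)=0$ is thus to make $\Pr_q(\tau_0\ge n/C)$ small so the fallback reveal-all branch is negligible, not to control a second exploration arm. Without this construction the third bullet is not proved, and it is the hardest part of the theorem.
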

The proof relies on fundamental results of Benjamini, Kalai and Schramm~\cite{Benjamini99} and Schramm and Steif~\cite{Schramm10}.

In particular, this proves that Spiral is not noise sensitive at criticality, while OP is, so that the conditions on continuity of the transition are indeed relevant for noise sensitivity. Let us also mention that proving that the missing case---$\tilde\theta(q)>0=\theta(q)$---never occurs is only slightly stronger than proving Conjecture~\ref{conj:qc:qct} stating that $\qc=\qct$. If it indeed does not occur, then Theorem~\ref{th:noise} provides the final answer to Question~13 of \cite{Balister16} as far as one-arm events are concerned. Furthermore, Theorem~\ref{th:noise} suggests some limitations  for the intuition given by Bartha and Pete~\cite{Bartha15} (see Question~1.3 therein). Namely, Theorem~\ref{th:noise} indicates that noise sensitivity non-trivially depends on the continuity of the transition, while~\cite{Bartha15} suggests that it should only depend on whether the model is subcritical or not, though for a more restrained class of models. Therefore, if a variant of Question~1.3 of~\cite{Bartha15} is to hold in general, additional ramifications should be needed.

\paragraph{Spectral gap and mean infection time of KCM}
Another application of our exponential decay results concerns KCM. We extend to full generality the scope of the main result of Cancrini, Martinelli, Roberto and Toninelli~\cite{Cancrini08} using their method together with exponential decay.
\begin{thm}
\label{th:gap}
Consider any KCM. If $q<\qct$, then the spectral gap of its generator is $0$ and the mean infection time of the origin in the stationary process (with initial law $\Pr_q$) is infinite. If $q>\qct$, then the spectral gap is strictly positive and the mean infection time of the origin in the stationary process is finite.
\end{thm}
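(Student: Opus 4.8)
The plan is to treat the two regimes $q>\qct$ and $q<\qct$ separately, in each case reducing the KCM statement to the bootstrap-percolation estimates of Theorem~\ref{th:exp:decay} via the renormalisation method of Cancrini, Martinelli, Roberto and Toninelli~\cite{Cancrini08}. (For non-subcritical $\cU$ one has $\qct=0$ by Theorem~\ref{th:general}, so only the regime $q>\qct$ is relevant and there the statement also follows from~\cite{Martinelli19b}; the content lies in the subcritical case.)

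\emph{The regime $q>\qct$: positive gap and finite mean infection time.} Fix $q>\qct$, so that $\theta_n(q)$ and $\tilde\theta_n(q)$ decay exponentially by Theorem~\ref{th:exp:decay}. I would run the bisection (``halving'') scheme of~\cite{Cancrini08}: bound below the Dirichlet form on a box of side $2^{k}$ by a constant multiple of the Dirichlet forms on two overlapping sub-boxes of side of order $2^{k-1}$, the multiplicative loss at each step being controlled by the probability that a box of the current scale fails to be emptiable with the help of an infected neighbourhood, an event of probability $O(\theta_{2^{k-1}}(q)+\tilde\theta_{2^{k-1}}(q))$. Exponential decay makes the product of these losses converge, yielding a uniform Poincar\'e inequality $\mathrm{Var}_q(f)\le C(q)\,\mathcal{D}(f)$ for all local $f$, hence $\mathrm{gap}\ge 1/C(q)>0$. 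Given a positive gap, finiteness of $\Ex_q[\tau_0^{\mathrm{KCM}}]$ is the standard consequence of reversibility: the Laplace transform of the hitting time of the positive-measure set $\{0\text{ infected}\}$ is controlled by the resolvent of the generator, giving $\Ex_q[\tau_0^{\mathrm{KCM}}]\le C/(q\cdot\mathrm{gap})<\infty$, exactly as in~\cite{Cancrini08,Martinelli19b}.

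\emph{The regime $q<\qct$: infinite mean infection time, hence zero gap.} Start the KCM from $A\sim\Pr_q$ and couple it, using the \emph{same} Poisson clocks, with the continuous-time bootstrap process in which a site becomes infected -- and never recovers -- at a ring of its clock as soon as one of its rules is then satisfied. By monotonicity of the rules, the KCM-infected set is always contained in the continuous-time bootstrap one, so $\tau_0^{\mathrm{KCM}}\ge\tau_0^{\mathrm{cBP}}$; in particular $\tau_0^{\mathrm{cBP}}=\infty$ on $\{0\notin[A]\}$, an event of positive probability when $q<\qc$. To obtain a heavy tail for $\tau_0^{\mathrm{cBP}}$ in general, I would transfer the bound $\Pr_q(\tau_0^{\mathrm{BP}}>n)\ge c(q)/n$ of Theorem~\ref{th:exp:decay}: on $\{n\le\tau_0^{\mathrm{BP}}<\infty\}$ the origin has bootstrap-depth $d\ge n$, so any infection of the origin in the continuous process forces the completion, in increasing time order, of a chain of clock rings along a path of $d$ \emph{distinct} sites (consecutive sites lying within the maximal rule range); the probability that a given such path is completed by time $\varepsilon n$ is at most $\Pr(\Gamma(d,1)\le\varepsilon d)\le e^{-dI(\varepsilon)}$ with $I(\varepsilon)\to\infty$ as $\varepsilon\to0$, and there are at most $C^{d}$ such paths, so a union bound gives $\Pr_q\big(\tau_0^{\mathrm{cBP}}\le\varepsilon n,\; n\le\tau_0^{\mathrm{BP}}<\infty\big)\le e^{-cn}$ once $\varepsilon$ is small. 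Together with the behaviour on $\{0\notin[A]\}$ this yields $\Pr_q(\tau_0^{\mathrm{KCM}}>\varepsilon n)\ge\Pr_q(\tau_0^{\mathrm{cBP}}>\varepsilon n)\ge\Pr_q(\tau_0^{\mathrm{BP}}\ge n)-e^{-cn}\ge c(q)/(2n)$ for $n$ large, so $\Ex_q[\tau_0^{\mathrm{KCM}}]=\infty$. Finally, were the gap positive, the reversibility bound of the previous paragraph would force $\Ex_q[\tau_0^{\mathrm{KCM}}]<\infty$, a contradiction; hence the gap is $0$. (When $q<\qc$ one may instead simply invoke the non-simplicity of the eigenvalue $0$ established in~\cite{Cancrini08}.)

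\emph{Main obstacle.} I expect the real difficulty to lie in the $q>\qct$ half: verifying that the bisection argument of~\cite{Cancrini08}, originally carried out for specific families, goes through for an arbitrary update family with the exponential decay of $\theta_n$ and $\tilde\theta_n$ as the only model-dependent input -- in particular, identifying the correct ``good block'' event and the elementary legal move by which an emptiable neighbourhood is used to relax a block, and controlling the measure-theoretic constants through the recursion. The transfer step in the $q<\qct$ half is comparatively routine, but still requires care in enumerating the relevant space-time chains and in reconciling discrete bootstrap time with continuous KCM time.
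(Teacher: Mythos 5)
Your high-level strategy matches the paper's: for $q>\qct$ feed the exponential decay of Theorem~\ref{th:exp:decay} into the renormalisation scheme of~\cite{Cancrini08}, and for $q<\qct$ transfer the infinite mean bootstrap infection time to the KCM. But there is a genuine gap in the $q>\qct$ half, and you have correctly located it yourself without filling it.

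What you leave unresolved is precisely what the paper's proof supplies. Rather than re-running the bisection, the paper invokes Theorem~3.3 of~\cite{Cancrini08} as a black box, which reduces the problem to exhibiting a renormalisation satisfying two conditions: (a) each renormalised site is good with high probability, and (b) if the three neighbouring blocks are good the origin's block is fully infected by their union. The entire content of the proof is the explicit construction: pick one rule $U_0\subset\H_{-\pi/2+\delta}\cap\H_{-\pi/2-2\delta}$, take the renormalisation parallelogram $B'$ generated by $\mathbf{a}=(n,0)$ and $\mathbf{b}=n(\cos(-\pi+\delta),\sin(-\pi+\delta))$, and define ``good'' by the conditions $\tau_x^{B'}<\eta n$ for $x$ in two thin strips along the lower sides (with free boundary) plus the analogous bound in two corner rhombi under a mixed infected/healthy boundary condition. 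Condition (b) is then a purely geometric check (Figure~\ref{fig:gap2}) using only the single rule $U_0$. Moreover, your estimate of the per-scale loss as ``$O(\theta_{2^{k-1}}+\tilde\theta_{2^{k-1}})$'' is the wrong way to think about it: the paper stresses that decay of $\theta_n$ alone would not suffice here; the decisive step is that the failure event $\tau_x^{B'}\ge \eta n$ forces a translate of the event $E_{\eta^2 n}$ (via a $U_0$-path of sites with strictly decreasing infection times), so the loss is controlled by $\tilde\theta_{\eta^2 n}$ specifically. This reduction of a time-bounded relaxation failure to the one-arm event $E_n$ is the non-trivial idea and is entirely absent from your sketch.

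For $q<\qct$, your coupling-plus-large-deviations argument is essentially re-proving the inequality $\delta\,\Ex_{\mathrm{BP},q}[\tau_0]\le\Ex_{\mathrm{KCM},q}[\tau_0]$, which the paper simply imports as Lemma~\ref{lem:MT} (Lemma~4.3 of~\cite{Martinelli19}). Combined with the third item of Theorem~\ref{th:exp:decay} and the complementary bound $\Ex_{\mathrm{KCM},q}[\tau_0]\le T_{\mathrm{rel}}(q)/q$ (Theorem~4.7 of~\cite{Cancrini09}), this one lemma gives both conclusions at once: infinite KCM mean infection time and zero gap. Your route reaches the same place, but you should realise that the heavy lifting in that direction is already packaged in the literature, whereas the heavy lifting in the $q>\qct$ direction — the model-independent good block construction — is exactly what you would still need to supply.
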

In other words, $\qct$ is the phase transition of the spectral gap of the associated KCM, so that it can be directly read off the associated BP as is the case of the non-ergodicity transition occurring at $\qc$~\cite{Cancrini08}.

We should note that the statement in the case of supercritical and critical models (for which $\qct=0$ by Theorem~\ref{th:general}) is also a trivial consequence of the quantitative result of~\cite{Martinelli19a}. We are particularly indebted to Cristina Toninelli for discussions around this theorem and its proof.

\section{Critical densities}
\label{sec:critdens}
In this section, after some short preparatory work of establishing basic properties of critical densities, we characterise $\qct$ in terms of them, which can be viewed as the most central result of the paper.
\subsection{Preliminaries}
We start with a few observations which follow trivially from Definition~\ref{def:diff}, but are essential nonetheless.
\begin{obs}
\label{obs:diff:monotone}
For all $u,\theta\in S^1$ one has
\[d_u^\theta\le \qct\]
and therefore the same holds for $d_u$ and $d_{u}^{\pm}$. Moreover, $\theta\mapsto d_u^{\theta}$ is non-decreasing for $\theta\in[0,\pi]$ and non-increasing for $\theta\in[-\pi,0]$ and $d_u^{\pm\pi}=\qct$.
\end{obs}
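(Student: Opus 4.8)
The plan is to derive all three assertions from monotonicity of the closure operator $[\cdot]$ together with a description of how the cone $C_{u,u+\theta}$ depends on $\theta$; no genuinely new idea is needed.

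For the bound $d_u^\theta\le\qct$, observe that $A\subseteq A\cup C_{u,u+\theta}$, hence $(A\cap B_n)\subseteq((A\cup C_{u,u+\theta})\cap B_n)$, and monotonicity of $[\cdot]$ gives $[(A\cap B_n)]\subseteq[((A\cup C_{u,u+\theta})\cap B_n)]$ for every $n$. Therefore $\Pr_q(0\notin[((A\cup C_{u,u+\theta})\cap B_n)])\le\theta_n(q)$, so $\sum_n n\,\Pr_q(0\notin[((A\cup C_{u,u+\theta})\cap B_n)])\le\sum_n n\,\theta_n(q)$; consequently the set of $q$ in the infimum defining $d_u^\theta$ contains the one defining $\qct$, whence $d_u^\theta\le\qct$. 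Passing to the monotone limits $\theta\to0^\pm$ gives $d_u^\pm\le\qct$, and $d_u=d_u^+\vee d_u^-\le\qct$.

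For the monotonicity in $\theta$, I would first record the elementary geometric fact that, identifying the open half-plane $\H_v$ through the origin with the open arc of $S^1$ of length $\pi$ of directions at angular distance more than $\pi/2$ from $v$, the cone $C_{u,u+\theta}=\H_u\cap\H_{u+\theta}$ is, for $\theta\in[0,\pi]$, an arc of length $\pi-\theta$ sharing one endpoint with $\H_u$, and symmetrically for $\theta\in[-\pi,0]$. In particular $\theta\mapsto C_{u,u+\theta}$ is nested decreasing on $[0,\pi]$, nested increasing on $[-\pi,0]$, and $C_{u,u+\pi}=C_{u,u-\pi}=\emptyset$ (the two strictly open half-planes being then complementary). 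Given $0\le\theta_1\le\theta_2\le\pi$, the inclusion $C_{u,u+\theta_2}\subseteq C_{u,u+\theta_1}$ yields $(A\cup C_{u,u+\theta_2})\subseteq(A\cup C_{u,u+\theta_1})$, so exactly as above $\Pr_q(0\notin[((A\cup C_{u,u+\theta_1})\cap B_n)])\le\Pr_q(0\notin[((A\cup C_{u,u+\theta_2})\cap B_n)])$, and comparing the corresponding series and hence their thresholds gives $d_u^{\theta_1}\le d_u^{\theta_2}$. The same argument on $[-\pi,0]$, where the cones grow with $\theta$, gives the reverse inequality, i.e.\ $\theta\mapsto d_u^\theta$ is decreasing there. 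Finally, since $C_{u,u\pm\pi}=\emptyset$, the defining formula for $d_u^{\pm\pi}$ reduces verbatim to that for $\qct$.

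I do not expect a real obstacle here. The only point needing a moment of care is the arc description of $C_{u,u+\theta}$ — specifically, checking that the family of cones is genuinely \emph{nested} in $\theta$ (not merely of monotone opening) and that at $\theta=\pm\pi$ the intersection of the two strictly open half-planes is empty in $\Z^2$ rather than a line. Everything else reduces to the fact that, configuration by configuration, enlarging the seed can only enlarge its closure, combined with a one-line domination of the series $\sum_n n\,\Pr_q(\cdot)$.
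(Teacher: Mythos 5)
Your proof is correct and fills in exactly what the paper leaves to the reader — the paper states this as an Observation "trivially following from Definition~\ref{def:diff}" with no written proof, and the route you take (monotonicity of $[\cdot]$ under enlarging the seed, nestedness of the cones $C_{u,u+\theta}$ in $\theta$, and $C_{u,u\pm\pi}=\varnothing$) is the routine unwinding intended. The care you take in checking that the cones are genuinely nested (one boundary ray fixed, the other rotating) and that the half-planes are strictly open so that $C_{u,u\pm\pi}$ is empty rather than a line is precisely the right thing to verify.
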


\begin{obs}
\label{obs:diff:sym}
For all $u,\theta\in S^1$ one has
\[d_u^\theta=d_{u+\theta}^{-\theta}.\]
\end{obs}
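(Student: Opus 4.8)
The plan is to observe that this identity is nothing more than the commutativity of intersection hidden inside the definition of the cone $C_{u,v}=\H_u\cap\H_v$ used in Definition~\ref{def:diff}. So I would simply unfold both sides of the claimed equality and check that they are literally the same infimum over $q$.

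Concretely, the first step is to write out $d_{u+\theta}^{-\theta}$ according to Definition~\ref{def:diff}: it is the infimum of those $q\in[0,1]$ for which $\sum_n n\,\Pr_q\bigl(0\not\in[((A\cup C_{u+\theta,(u+\theta)+(-\theta)})\cap B_n)]\bigr)<\infty$. In $S^1\cong\R/2\pi\Z$ we have $(u+\theta)+(-\theta)=u$, so the cone appearing here is $C_{u+\theta,u}=\H_{u+\theta}\cap\H_u$, which by commutativity of set intersection equals $\H_u\cap\H_{u+\theta}=C_{u,u+\theta}$, exactly the cone in the definition of $d_u^\theta$. The second step is then immediate: for every $q$ and every $n$ the events $\{0\not\in[((A\cup C_{u,u+\theta})\cap B_n)]\}$ and $\{0\not\in[((A\cup C_{u+\theta,u})\cap B_n)]\}$ are the \emph{same} event, hence the two series agree term by term, their sets of summability thresholds coincide, and the two infima are equal. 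This gives $d_u^\theta=d_{u+\theta}^{-\theta}$.

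There is no genuine obstacle here; the only point requiring (trivial) care is the bookkeeping of the angular parameters, namely checking that $-\theta\in[-\pi,\pi]$ whenever $\theta\in[-\pi,\pi]$ so that the right-hand side is a well-defined instance of Definition~\ref{def:diff}, and that the identification $S^1\cong\R/2\pi\Z$ indeed yields $(u+\theta)+(-\theta)=u$. Both are immediate, and no probabilistic input is needed. (One could additionally record the corresponding consequences for the one-sided limits, e.g. that $d_u^+$ and $d_{u+\theta}^{-\theta}$-type quantities match up as $\theta\to0$, but this is not part of the present statement and will only be used implicitly later.)
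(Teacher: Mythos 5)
Your proof is correct and is exactly the (unstated, because considered trivial) argument the paper has in mind: the paper presents Observation~\ref{obs:diff:sym} without proof as "trivially following from Definition~\ref{def:diff}", and your unfolding via $C_{u,u+\theta}=\H_u\cap\H_{u+\theta}=\H_{u+\theta}\cap\H_u=C_{u+\theta,u}$ is precisely that trivial verification, with the angular bookkeeping handled correctly.
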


The following fundamental lemma is based on a classical topological trick.
\begin{lem}
\label{lem:topo}
Let $\varepsilon>0$ and $I\neq S^1$ be a closed interval of $S^1$, which we identify with an interval $[u,v]$ of $\R$. Then there exists $n\in\N$ and a finite sequence $u=u_0<u_1<\ldots <u_n=v$ of directions in $I$ such that
\begin{equation}
\label{eq:lem:topo}\forall i\in[1,n],\,0\le d_{u_{i-1}}^{u_i-u_{i-1}}-\left(d_{u_{i-1}}^+\vee d_{u_i}^-\right)<\varepsilon.
\end{equation}
\end{lem}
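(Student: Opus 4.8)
The plan is to treat the left critical density as a function of the direction and exploit its regularity together with a compactness/Lebesgue-number argument on the interval $I$. First I would record, from Observation~\ref{obs:diff:monotone}, that for a fixed $u$ the map $\theta\mapsto d_u^\theta$ is monotone on each side of $0$, hence $d_u^+=\lim_{\theta\to0+}d_u^\theta$ exists as an infimum of the values $d_u^\theta$ for small $\theta>0$. Using Observation~\ref{obs:diff:sym}, $d_u^\theta=d_{u+\theta}^{-\theta}$, so $d_{u_{i-1}}^{u_i-u_{i-1}}$ can be read either as a ``right-looking'' quantity at $u_{i-1}$ or a ``left-looking'' quantity at $u_i$; this symmetry is what lets a single cone-difference term be compared simultaneously to $d_{u_{i-1}}^+$ and to $d_{u_i}^-$, and it is why the quantity $d_{u_{i-1}}^+\vee d_{u_i}^-$ is the natural lower bound appearing in~\eqref{eq:lem:topo}. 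The inequality $d_{u_{i-1}}^{u_i-u_{i-1}}\ge d_{u_{i-1}}^+$ is immediate from monotonicity (it is an infimum over a larger range on the positive side); similarly $d_{u_{i-1}}^{u_i-u_{i-1}}=d_{u_i}^{-(u_i-u_{i-1})}\ge d_{u_i}^-$. Hence the left inequality $0\le d_{u_{i-1}}^{u_i-u_{i-1}}-(d_{u_{i-1}}^+\vee d_{u_i}^-)$ holds for \emph{any} choice of intermediate points, and the entire content of the lemma is the strict upper bound by $\varepsilon$.

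For the upper bound, I would argue by a covering argument. For each point $w\in I$, by definition of $d_w^+$ there is a small $\delta_w^+>0$ with $d_w^{\delta_w^+}<d_w^++\varepsilon/2$, and by definition of $d_w^-$ a small $\delta_w^->0$ with $d_w^{-\delta_w^-}<d_w^-+\varepsilon/2$. The open intervals $(w-\delta_w^-/2,\,w+\delta_w^+/2)$, $w\in I$, cover the compact set $I$; extract a finite subcover and let $\eta>0$ be a Lebesgue number. Now choose the partition $u=u_0<u_1<\dots<u_n=v$ fine enough that each $u_i-u_{i-1}<\eta$; then each pair $\{u_{i-1},u_i\}$ lies in one interval of the subcover, say the one centred at $w$. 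The point is that for that $w$ we then have both $0<u_i-u_{i-1}<\delta_w^+$ (if $w\le u_{i-1}$) or $0<u_i-u_{i-1}<\delta_w^-$ (if $w\ge u_i$), and in either case monotonicity lets us bound $d_{u_{i-1}}^{u_i-u_{i-1}}$ above by a value of $d_w^{\pm\delta_w^{\pm}}$ up to the already-controlled cone terms, which is $<d_w^\pm+\varepsilon/2\le (d_{u_{i-1}}^+\vee d_{u_i}^-)+\varepsilon$ once one also uses that $d_w^\pm$ is close to $d_{u_{i-1}}^+$ or $d_{u_i}^-$. To make this last comparison rigorous I would want a one-sided semicontinuity statement for $w\mapsto d_w^+$ and $w\mapsto d_w^-$ — e.g. that $d_{u_{i-1}}^+$ and $d_w^\pm$ differ by at most $\varepsilon/2$ when the points are close — which should itself follow from the monotonicity in Observation~\ref{obs:diff:monotone} by sandwiching $d_w^{\pm\theta}$ between $d_{u_{i-1}}^{\theta'}$ for nearby $\theta'$; alternatively one folds this semicontinuity directly into the choice of the covering intervals by shrinking them.

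The main obstacle I anticipate is precisely this bookkeeping: the cone-difference quantity $d_{u_{i-1}}^{u_i-u_{i-1}}$ must be squeezed between two \emph{different} one-sided limits at two \emph{different} base points, and monotonicity of $\theta\mapsto d_u^\theta$ is only stated for fixed $u$, so one has to carefully interleave the direction-variable and the angle-variable estimates. The clean way is to fix, for each covering interval, its centre $w$ as the ``reference'' base point, use $d_u^\theta=d_{u+\theta}^{-\theta}$ to rebase the cone term onto $w$, and then apply the single-base-point monotonicity; the error incurred in rebasing is controlled because $|u_i-u_{i-1}|$ and the distances from $w$ are all $O(\eta)$, and $\eta$ is at our disposal. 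Everything else — existence of the Lebesgue number, finiteness of the subcover, choice of a fine enough partition — is standard compactness on the circle minus a point (identified with a bounded real interval), and the left inequality is free as noted above.
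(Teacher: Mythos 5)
Your two preliminary observations are sound: the lower bound $d_{u_{i-1}}^{u_i-u_{i-1}}\ge d_{u_{i-1}}^+\vee d_{u_i}^-$ does follow from Observations~\ref{obs:diff:monotone} and~\ref{obs:diff:sym}, and you are right that the real content of the lemma is that the cone term only has to be $\varepsilon$-close to \emph{one} of the two one-sided limits. The gap is in the final comparison you flag yourself. After bounding $d_{u_{i-1}}^{u_i-u_{i-1}}\le d_w^{\delta_w^\pm}<d_w^\pm+\varepsilon/2$, you need $d_w^\pm\le (d_{u_{i-1}}^+\vee d_{u_i}^-)+\varepsilon/2$, which is a one-sided continuity statement for $w\mapsto d_w^\pm$. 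This does \emph{not} follow from Observation~\ref{obs:diff:monotone}: that observation fixes the base direction and varies only the opening $\theta$, and the cone-nesting monotonicity you are implicitly using (if $[u',v']\subset[u,v]$ then $C_{u,v}\subset C_{u',v'}$, hence the critical density with cone $C_{u',v'}$ is not larger) compares cones with a common endpoint or nested intervals but gives the inequality in the \emph{wrong} direction for comparing $d_w^\pm$ with $d_{u_{i-1}}^\pm$. There is no ``rebasing'' identity onto an arbitrary interior $w$; Observation~\ref{obs:diff:sym} only exchanges the two endpoints of a single cone. Indeed the paper explicitly poses the continuity of $(u,\theta)\mapsto d_u^\theta$ as an open question, so your proof as written rests on an unproved regularity assumption.

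The repair is to make the ``reference'' directions part of the partition rather than off-lattice centres: extract a finite subcover $J_{w_1},\dots,J_{w_k}$, pick one point in each overlap $J_{w_j}\cap J_{w_{j+1}}$, and take the partition consisting of $u,v$, the $w_j$'s and these overlap points. Then every consecutive pair has a $w_j$ as one endpoint, and the estimate $d_{w_j}^{\pm\delta_{w_j}}<d_{w_j}^\pm+\varepsilon$ (via Observations~\ref{obs:diff:monotone} and~\ref{obs:diff:sym}) gives~\eqref{eq:lem:topo} for that pair with no cross-basepoint comparison at all. This is, in substance, what the paper does: it runs a one-sided ``open-and-closed'' argument on the set $I_0$ of right endpoints admitting a valid partition, and in each step the newly appended direction $v'$ is the one at which $d_{v'}^\pm$ is evaluated, so the inequality for the new subinterval is automatic and only a single direction's one-sided limit is ever invoked.
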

\begin{proof}
Recall that by Observation~\ref{obs:diff:sym} for $u',v'\in S^1$ with $0<v'-u'<\pi$ we have $d_{u'}^{v'-u'}=d_{v'}^{-(v'-u')}$. Then by Observation~\ref{obs:diff:monotone} one always has $d_{u'}^{v'-u'}\ge d^+_{u'}\vee d^-_{v'}$, so we need only establish the second inequality. 

Set 
\[I_0=\left\{v'\in[u,v],\,\exists n\exists(u_i)\in(S^1)^{n+1},\;u=u_0<\ldots <u_n=v',\,\text{satisfying \eqref{eq:lem:topo}}\right\},\]
and $v_0=\sup I_0$, which we shall prove to be $v$. To do this we prove that $I_0$ is open to the right:
\[\forall v'\in I_0\,\exists\delta>0,\,[v',v'+\delta]\cap I\subset I_0\] and closed to the right:
\[\exists v'\in I,(v_i)\in I_0^\N,\, v_i\nearrow v'\Rightarrow v'\in I_0,\] which suffices as $I$ is an interval and $u\in I_0$.

For the first part, fix $v'\in I_0\setminus\{v\}$, $n$ and $(u_i)_0^n$, $u_n=v'$ as provided by the definition of $I_0$. By Observation~\ref{obs:diff:monotone} there exists $(v-v')\wedge\pi>\delta>0$ small enough so that $d_{v'}^{\delta}-d_{v'}^+<\varepsilon$, which proves that $[v',v'+\delta]\subset I_0$.

The proof of $I_0$ being closed goes along the same lines looking to the left instead of to the right. More precisely, let $v_i$ form an increasing sequence of elements of $I_0$ converging to $v'\in I$. By definition for $i$ sufficiently large $v'-v_i<\delta$, where $0<\delta<(v'-u)\wedge\pi$ is such that $d_{v'}^{-\delta}-d_{v'}^-<\varepsilon$. Hence, taking a sequence given by the definition of $v_i\in I_0$ and appending $v'$ to it, we obtain $v'\in I_0$, which concludes the proof.
\end{proof}
\begin{rem}
One can use the technique of quasi-stable directions~\cite{Bollobas14} to deal more easily with intervals of unstable and isolated stable directions. We do not do this as our construction works for the more difficult stable intervals and trivially also applies to unstable ones.

Also notice that if one knew that $(u,\theta)\mapsto d_u^\theta$ is continuous, this would follow by uniform continuity on a compact set.
\end{rem}
We shall in fact need the following variant which follows immediately.
\begin{cor}
\label{cor:topo}
With the notation of Lemma~\ref{lem:topo} there also exist two directions such that $v<v'<u'<u$ and
\begin{align*}
d_u^{u'-u}-d_{u}^-&<\varepsilon,\\
d_v^{v'-v}-d_v^+&<\varepsilon.
\end{align*}
\end{cor}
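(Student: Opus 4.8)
The plan is to deduce this directly from the monotonicity recorded in Observation~\ref{obs:diff:monotone} together with the definition of the one-sided critical densities $d_u^{\pm}$ as monotone limits; in particular no new idea beyond Lemma~\ref{lem:topo} is needed, which is why the corollary ``follows immediately''. Since $I\neq S^1$ is a closed interval, its complement is a nonempty open arc, which in the parametrisation of the excerpt can be written $(v,u)$ with $u-v=\ell>0$ (computing mod $2\pi$): one reaches $u$ from $v$ by increasing the angle, and every direction strictly inside this arc satisfies $v<\,\cdot\,<u$ in the sense used in the paper.

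First I would invoke the existence of the monotone limit $d_v^+=\lim_{\theta\to 0+}d_v^\theta$ from Definition~\ref{def:diff} to choose $\delta_v\in(0,\ell/2)$ with $d_v^{\delta_v}-d_v^+<\varepsilon$, and likewise $d_u^-=\lim_{\theta\to 0-}d_u^\theta$ to choose $\delta_u\in(0,\ell/2)$ with $d_u^{-\delta_u}-d_u^-<\varepsilon$. Then I would set $v'=v+\delta_v$ and $u'=u-\delta_u$. Because $\delta_v,\delta_u<\ell/2$, we obtain $v<v'<v+\ell/2=u-\ell/2<u'<u$, so the ordering $v<v'<u'<u$ demanded by the statement holds and both $v'$ and $u'$ lie in the complement of $I$.

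It then remains only to verify the two displayed inequalities. By construction $v'-v=\delta_v\in(0,\pi)$, so the monotonicity of $\theta\mapsto d_v^\theta$ on $[0,\pi]$ from Observation~\ref{obs:diff:monotone} gives $d_v^{v'-v}=d_v^{\delta_v}<d_v^++\varepsilon$. Symmetrically $u'-u=-\delta_u\in(-\pi,0)$, and the monotonicity of $\theta\mapsto d_u^\theta$ on $[-\pi,0]$ gives $d_u^{u'-u}=d_u^{-\delta_u}<d_u^-+\varepsilon$. These are exactly the two asserted bounds.

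There is essentially no obstacle: whereas Lemma~\ref{lem:topo} required the topological connectedness argument to chain finitely many steps across an entire interval of stable directions, here a single step at each of the two endpoints suffices, and the monotone-limit definition of $d^{\pm}$ does all the work. The only point deserving a word is that the two chosen directions must fit inside the possibly short complementary arc while preserving $v'<u'$, which is precisely why one imposes $\delta_v,\delta_u<\ell/2$.
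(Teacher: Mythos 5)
Your argument is correct and follows the same route as the paper: the paper likewise applies one "open-to-the-right" step at $v$ and one "closed-from-the-left" step at $u$ from the proof of Lemma~\ref{lem:topo} and notes that those steps already produce the stronger one-sided inequalities. You merely spell out the details (explicit $\delta_v,\delta_u<\ell/2$ to guarantee $v'<u'$ inside the complementary arc), which the paper leaves implicit.
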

\begin{proof}
Given a sequence as in Lemma~\ref{lem:topo} we apply one step of the reasoning to the right of $v$, obtaining $v'$ sufficiently close to $v$ and one step to the left of $u$. We simply observe that the inequalities we obtained in the proof of the Lemma were in fact the stronger ones in the statement of the corollary.
\end{proof}

\subsection{Critical density characterisation of $\qct$---proof of Theorem~\ref{th:general}}
\label{subsec:main}
In order to prove Theorem~\ref{th:general} we will first need to show that above the maximal critical density in a semi-circle a certain well-chosen big droplet of infection grows indefinitely in that direction with high probability. We thus start by defining our droplets (see Figure~\ref{fig:droplet}).
\begin{defn}
Let $n\ge 3$, $u=u_0<\ldots<u_{n+1}=v$ be directions with $u_n=u_1+\pi$ and $u_n<v<u<u_1$ and let $L$ be in $\R_+$. We then define the \emph{droplet} of size $L$ by
\begin{equation}
D_L=\bigcap_{i=0}^{n+1} \H_{u_i}^L-x_L,\quad
D_{L+}=\bigcap_{L'>L}D_{L'}=\left(\bigcap_{i=1}^{n}\H_{u_i}^{L+}-x_L\right)\cap V_{u,v},
\label{eq:def:droplet}
\end{equation}
where $x_L\in\R^2$ is such that $\<x_L,u\>=\<x_L,v\>=L$, so that droplets are inscribed in $V_{u,v}$.
\end{defn}
It is crucial for the reasoning to follow that all sides of this droplet are of length $\Theta(L)$ for large $L$ when the directions are fixed.

\begin{figure}
\begin{center}
\begin{tikzpicture}[line cap=round,line join=round,>=triangle 45,x=0.3cm,y=0.3cm]
\draw (-1.27,10)-- (3.33,10);
\draw (8.75,5.94)-- (3.33,10);
\draw (10.23,0.47)-- (8.75,5.94);
\draw (9.52,-3.63)-- (10.23,0.47);
\draw (9.52,-3.63)-- (7.13,-7.48);
\draw (2.59,-10)-- (7.13,-7.48);
\draw (-2.1,-10)-- (2.59,-10);
\draw [->] (0,0) -- (-4.02,-9.15);
\draw [->] (0,0) -- (0,-10);
\draw [->] (0,0) -- (4.86,-8.74);
\draw [->] (0,0) -- (0,10);
\draw [->] (0,0) -- (-2.5,9.68);
\draw [->] (0,0) -- (6,8);
\draw [red, ultra thick] (6,8) -- (3.33,10);
\draw (4.5,9) node[anchor=west] {$h_{n-1}$};
\fill[fill=black,fill opacity=0.3] (2.87,8.6) -- (2.87,10.6) -- (4.87,10.6) -- (4.87,8.6) -- cycle;
\draw (-30,0) node[anchor=south west] {$0$};
\draw (-5.5,-3.5) node[anchor=north west] {$L.u_0$};
\draw (3,-3.5) node[anchor=north west] {$L.u_2$};
\draw (0,-7) node[anchor=north west] {$L.u_1$};
\draw (-6,7) node[anchor=north west] {$L.u_{n+1}$};
\draw (3,5) node[anchor=north west] {$L.u_{n-1}$};
\draw (0,8) node[anchor=north west] {$L.u_n$};
\draw (3.5,0) node[anchor=west] {$\vdots$};
\draw (-30.47,2.47) -- (-1.27,10);
\draw (-30.47,2.47) -- (-2.1,-10);
\end{tikzpicture}
\end{center}
\caption{The droplet $D_L$ of size $L$ for the directions $u_0,\ldots u_{n+1}$ defined in~\eqref{eq:def:droplet}. The left `half-side', $h_{n-1}$ of $l_{n-1}$ is thickened. The shaded box is $x+B_{L/C}$ for some $x\in h_{n-1}$.}
\label{fig:droplet}
\end{figure}

The growth mechanism is, of course, quite different from the one encountered for critical and supercritical models (finding an infection somewhere on the side of a droplet and relying on quasi-stable directions to make sure that the sides expand to fill the corners as well). Our strategy is to infect sites one by one by inspecting an area of size $\Omega(L)$ to have sufficiently small probability that the site remains uninfected in that zone. We can then use the union bound to infect a new row on one side of the droplet. We use this procedure to make the droplet grow, making sure that each side grows linearly, so that we can finally sum the probabilities using the decay provided by the definition of critical densities.

The next lemma roughly tells us that once a set of directions is fixed as in Corollary~\ref{cor:topo}, a large infected droplet is highly likely to grow to infect the cone it is inscribed in if given a sufficiently high (compared to the critical densities) additional density of infections.
\begin{lem}
\label{lem:general:main}
Let $n>2$ and let $(u_i)_0^{n+1}$ be directions such that \[u=u_0<u_1<\ldots<u_n<u_{n+1}=v,\]
and $u_1+\pi=u_n<u_{n+1}<u_0<u_1$. Fix $C$ large enough depending on the directions. Let $q>\max d_{u_{i-1}}^{u_i-u_{i-1}}$ for all $1\le i\le n+1$ and let $\delta>0$. Then for $L$ large enough and for any $\Lambda\ge CL$
\[\Pr_q\left(\left[D_L\cup (A\cap B_{C\Lambda})\right]\supset V_{u,v}\cap B_{\Lambda/C}\right)>1-\delta.\]
\end{lem}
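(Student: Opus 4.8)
The strategy is exactly the one sketched in the paragraph preceding the lemma: grow the droplet $D_L$ one full ``row'' at a time, where each row is parallel to one of the bounding sides, and control the total failure probability by a union bound using the decay of $\theta_n$ built into the definition of the critical densities. Let me set up the geometry first. Fix the rational directions $(u_i)_0^{n+1}$ and $C$ large. For a side of $D_L$ directed by $u_i$ (i.e.\ lying on $\partial \H_{u_i}^L - x_L$), define the ``next slab'' to be the set of lattice points just outside that side, within $\H_{u_j}^{L}-x_L$ for the two neighbouring directions $u_{i-1}, u_{i+1}$; pushing one side out by a bounded amount while keeping all other constraints fixed enlarges $D_L$ to (a translate of) $D_{L+c}$ for some side-dependent $c=\Theta(1)$. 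The key point, emphasised after the definition of the droplet, is that every side of $D_L$ has length $\Theta(L)$, so a slab contains $\Theta(L)$ sites.

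**Growing one row.** The core sub-claim is: conditionally on a large rectangular box $R$ of dimensions $\Theta(L)\times\Theta(L)$ placed appropriately against a side of $D_L$ directed by $u_i$, with probability $\ge 1 - \theta_{\Theta(L)}(q)\cdot\Theta(L)^{O(1)}$ the infected set $A\cap R$ together with the half-plane-like boundary condition supplied by $D_L$ fills the adjacent slab. Here is where $q > d_{u_{i-1}}^{u_i - u_{i-1}} \vee d_{u_i}^{u_{i+1}-u_i}$ (hence $q$ exceeds all the relevant directional critical densities appearing in the hypothesis) enters: by Definition~\ref{def:diff}, for such $q$ the sum $\sum_m m\,\Pr_q(0\notin[(A\cup C_{u_i,u_i+\theta})\cap B_m])$ converges for a suitable small $\theta$, so in particular $\Pr_q(0\notin[(A\cup C_{u_i,u_i+\theta})\cap B_m]) = o(1/m^2)$. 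Translating a copy of this cone so that its apex sits just outside the side and its boundary lies along $D_L$'s side (this uses that $D_L$, near that side, contains a cone directed by exactly $u_i$, up to the two neighbouring directions which are even more favourable by monotonicity, Observation~\ref{obs:diff:monotone}), we get that each individual site of the slab becomes infected with probability $1 - o(1/L^2)$ from the infections in a box of radius $\Theta(L)$ around it. A union bound over the $\Theta(L)$ sites of the slab gives failure probability $o(1/L)$ for that single row, and a further sum over the $O(L)$ rows needed to double the size of the droplet gives failure probability $o(1)$.

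**Iterating to size $\Lambda$.** Starting from $D_L$, we grow side by side, cycling through the $n+2$ directions, so that after $O(1)$ rounds the droplet has roughly doubled and is still inscribed in $C_{u,v}$ (this is why Corollary~\ref{cor:topo} was invoked beforehand: it furnishes the auxiliary directions $u'=u_0$ and $v'=u_{n+1}$ just outside $[v,u]$ so that the droplet's two ``free'' sides stay pinned against $\partial C_{u,v}$ as it grows, and the corners fill correctly). Iterating until the droplet contains $C_{u,v}\cap B_{\Lambda/2}$ requires $\Theta(\log(\Lambda/L))$ doublings, hence $\Theta(\Lambda/L)\cdot\Theta(\log(\Lambda/L)) = \Theta(\Lambda \log \Lambda / L)$ individual row-growths — wait, more carefully: linear growth means $\Theta(\Lambda/1) = \Theta(\Lambda)$ rows total, each a box of size $\Theta(\text{current }L')$, all fitting inside $B_{C\Lambda}$ since $\Lambda \ge CL$ and $C$ is large. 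The total failure probability is then bounded by $\sum_{L' = L}^{\Lambda} \Theta(L') \cdot o(1/L'^2) = \sum_{L'\ge L} o(1/L')$ — this naive bound is not summable, so one must be slightly more careful: use instead that $\sum_m m\,\theta_m(q) < \infty$ (which is what the definition actually gives), bounding the per-row failure probability by $\Theta(L')\,\theta_{\Theta(L')}(q)$ and the total by $\sum_{L' \ge L} \Theta(L')\,\theta_{\Theta(L')}(q)$, which is the tail of a convergent series and hence $< \delta$ once $L$ is large. This is the decisive use of the ``$\sum_n n\theta_n < \infty$'' form of the critical-density definition rather than mere convergence to $0$.

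**Main obstacle.** The genuinely delicate point is the geometric bookkeeping that a single ``row-filling'' event really is an instance of the cone event in Definition~\ref{def:diff}: one must check that the local boundary condition presented to a slab by the already-infected droplet $D_L$ contains a translate of $C_{u_i, u_i+\theta}$ for some $\theta$ with $|\theta|$ as small as we like (shrinking $\theta$ only requires taking $L$ larger, since the side has length $\Theta(L) \to \infty$), and simultaneously that filling all slabs in one round reconstitutes an honest larger droplet with all $n+2$ sides still of length $\Theta(L')$ and the corner regions correctly infected. Handling the corners — where two sides meet at directions $u_{i-1}<u_i$ and the infection has to ``turn the corner'' — is the part that needs the neighbouring directions and the monotonicity Observation~\ref{obs:diff:monotone}; one typically does the corner by overshooting each side slightly so the slabs overlap near the vertex, and invokes the same cone estimate with the more favourable neighbouring direction. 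Everything else (the union bound, the geometry of $D_L \subset B_{\Lambda/2}$ versus $B_{C\Lambda}$, rationality of directions ensuring the lattice sides behave) is routine once this is in place.
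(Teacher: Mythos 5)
Your proposal matches the paper's proof in all essentials: grow the droplet one line at a time on each side, bound the per-site failure probability by the cone event $\Pr_q(0\notin[(A\cup C_{u_i,u_{i+1}})\cap B_{L/C}])$ from Definition~\ref{def:diff}, union-bound over the $\Theta(L')$ sites of a new line, and sum over the (linearly growing) droplet sizes using the convergence of $\sum_k k\,\Pr_q(0\notin[(A\cup C_{u_i,u_{i+1}})\cap B_k])$ — exactly the tail-of-a-convergent-series step you correctly identify as decisive. Your brief wobble about doublings versus linear growth is self-corrected, and your slight imprecision about shrinking the cone aperture as $L$ grows is immaterial (the paper simply uses the fixed cones $C_{u_i,u_{i+1}}$ determined by the prescribed directions, with the hypothesis $q>\max d_{u_{i-1}}^{u_i-u_{i-1}}$ ensuring each such fixed-aperture probability is summable); everything else is in agreement with the paper's argument.
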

\begin{proof}
Let $(u_i)_{i=0}^n$, $C$, $q$ and $\delta$ be as in the statement of the lemma.

Consider $L$ such that $\Z^2\cap(D_{L+}\setminus D_L)\neq\varnothing$ and let $L'=\sup\{l,\,D_{l}\cap\Z^2= D_{L+}\cap\Z^2\}$. Consider the (possibly empty) new line of $D_{L'}\setminus D_L$ in direction $u_i$, $l_i=\Z^2\cap D_{L'}\cap\left(\left(\H^{L'}_{u_i}\setminus\H^{L}_{u_i}\right)-x_L\right)$, for $1\le i\le n$. Let $h_i=\{x\in l_i,\,\langle u_i+\pi/2,x+x_L\rangle\ge 0\}$ be the left \emph{half-side} of $l_i$ (looking from inside the droplet), see Figure \ref{fig:droplet}. For each site $x\in h_i$ and $\Lambda\ge CL$ we have
\begin{align*}
\Pr_q\left(x\not\in\left[D_L\cup (A\cap B_{C\Lambda})\right]\right)&\le\Pr_q\left(x\not\in\left[(A\cup D_L)\cap (x+B_{L/C})\right]\right)\\
&\le\Pr_q\left(0\not\in\left[\left(A\cup V_{u_i,u_{i+1}}\right)\cap B_{L/C}\right]\right),\end{align*}
since inside a box of size $L/C$ around $x$ the droplet locally looks like (at least) $V_{u_i,u_{i+1}}$, see Figure \ref{fig:droplet}.
Then the union bound over all sites in all half-sides gives
\begin{multline*}\Pr_q\left(\left[D_L\cup (A\cap B_{C\Lambda})\right]\not\supset D_{L'}\right)\le \sum_{i=1}^n|l_i|\Big(\Pr_q\left(0\not\in\left[\left(A\cup V_{u_i,u_{i+1}}\right)\cap B_{L/C}\right]\right)\\
+\Pr_q\left(0\not\in\left[\left(A\cup V_{u_{i-1},u_i}\right)\cap B_{L/C}\right]\right)\Big).
\end{multline*}

We now iterate this bound. Let $L_0$ be large enough (depending on $C$, $\delta$ and $(u_i)^{n+1}_{i=0}$) and such that such that $\Z^2\cap(D_{L_0+}\setminus D_{L_0})\neq\varnothing$. Define $L_{j+1}=\sup\{l,\,D_{l}\cap\Z^2=D_{L_j+}\cap\Z^2\}$ for all $j\ge0$. Again by the union bound for any $L\ge L_0$ and $\Lambda\ge CL$ we have
\begin{multline*}
\Pr_q\left(\left[D_L\cup (A\cap B_{C\Lambda})\right]\not\supset D_{\Lambda}\right)\leq \sum_{i=1}^{n}\sum_{j=0}^\infty |l_i^j|\Big(\Pr_q\left(0\not\in\left[\left(A\cup V_{u_i,u_{i+1}}\right)\cap B_{L_j/C}\right]\right)\\
+\Pr_q\left(0\not\in\left[\left(A\cup V_{u_{i-1},u_i}\right)\cap B_{L_j/C}\right]\right)\Big),
\end{multline*}
where $l_i^j=\Z^2\cap D_{L_{j+1}}\cap\left(\left(\H_{u_i}^{L_{j+1}}\setminus\H_{u_i}^{L_j}\right)-x_{L_j}\right)$.

Let us upper bound the first term for $i=1$ for concreteness. Let $j_k=\min\{j,\,L_j\ge Ck\}$. Then for any $k\ge \lfloor L_0/C\rfloor$
\[\sum_{j=j_k}^{j_{k+1}-1}|l^j_1|\Pr_q\left(0\not\in\left[\left(A\cup V_{u_1,u_2}\right)\cap B_{L_j/C}\right]\right)\le \Pr_q\left(0\not\in\left[\left(A\cup V_{u_1,u_2}\right)\cap B_k\right]\right)\sum_{j=j_k}^{j_{k+1}-1}|l^j_1|.\]
Finally, the last sum is easily seen to be at most $C^3k$ (it is essentially equal to the area covered by the $u_i$ side while growing from $D_{Ck}$ to $D_{C(k+1)}$), so in total we get
\[
\Pr_q\left(\left[D_L\cup (A\cap B_{C\Lambda})\right]\not\supset D_{\Lambda}\right)\leq \!\!\!\sum_{k=\lfloor L_0/C\rfloor}^\infty\!\!\! C^3k\sum_{i=0}^{n}\Pr_q\left(0\not\in\left[\left(A\cup V_{u_i,u_{i+1}}\right)\cap B_k\right]\right)\le\delta\]
by Definition~\ref{def:diff} and the choice of $q$. This concludes the proof, since $D_\Lambda\supset V_{u,v}\cap B_{\Lambda/2}$ (by construction the $u,v$-sector of the Euclidean ball of radius $\Lambda/C$ is contained in $D_\Lambda$).
\end{proof}

We are now ready to prove Theorem~\ref{th:general}.
\begin{proof}[Proof of Theorem~\ref{th:general}]
By Observation~\ref{obs:diff:monotone} we have
\[\qct\ge \sup_{u\in S^1}d_u\ge \inf_{C\in\cC}\sup_{u\in C}d_u,\]
so we are left with proving $\qct\le \inf_{C\in\cC}\sup_{u\in C}d_u$.

Fix $\varepsilon>0$ sufficiently small and $C\in\cC$ such that
\[\varepsilon+\inf_{C'\in\cC}\sup_{u\in C'}d_u>\sup_{u\in C}d_u.\] Also fix a set of directions as required by Lemma~\ref{lem:general:main} with $C=[u_1,u_n]$ and satisfying 
\begin{align*}
\forall i\in[2,n],\;d_{u_{i-1}}^{u_i-u_{i-1}}-(d_{u_{i-1}}^+\vee d_{u_i}^-)&< \varepsilon\\
d_{u_1}^{-(u_1-u_0)}-d_{u_1}^-&<\varepsilon\\
d_{u_n}^{u_{n+1}-u_n}-d_{u_n}^+&<\varepsilon,
\end{align*}
as provided by Corollary~\ref{cor:topo}. Without loss of generality (after rotating the lattice) we assume $u_n=(0,1)$. Fix $\delta>0$ sufficiently small depending on the directions $(u_i)$ and $\varepsilon$. Let $q'=2\varepsilon+\sup_{u'\in C}d_{u'}$, so that $q=q'-\varepsilon$ satisfies the condition $q>\max d_{u_{i-1}}^{u_i-u_{i-1}}$ of Lemma~\ref{lem:general:main}.

We sample (a part of) the infected sites as the union of two independent percolations---one with probability $\varepsilon$ and another one with probability $q$. At this point one can easily obtain $q'\ge\qc$ using Lemma~\ref{lem:general:main} to prove that a droplet of size $L$ grows with high probability in the second percolation and find such a large droplet in the first one. However, in order to avoid using $\qc=\qct$, we give a slightly more involved but fairly standard renormalisation procedure to prove the desired inequality for $\qct$. Furthermore, we will be able to deduce that $\qct$ is also the critical probability of exponential decay.

Let $L$ be large enough for the assertion of Lemma~\ref{lem:general:main} to hold. Also fix $N$ sufficiently large depending on $L$ such that $\Pr_\varepsilon(\exists x\in B_N,\,A\cap B_N\supset D_L+x)\ge 1-\delta$. Finally, let $c\in \N$ be large enough depending only on the directions $(u_i)$ (and on the constant $C$ in Lemma~\ref{lem:general:main}), but not on $\delta$. Consider a renormalised lattice $\cL=\Z^2$ and say $X\in \cL$ is open if $N.X+B_N\subset[A\cap (N.X+B_{cN})]$. This process is clearly only $2c$-dependent\footnote{Each site is independent from the states of sites at distance more than $2c$ from it.} and we claim that each site is open with probability at least $1-2\delta$. Indeed, $N(X-(\lfloor \sqrt{c}\rfloor,0))+B_N$ contains a droplet of size $L$ in the percolation process with parameter $\varepsilon$ with probability at least $1-\delta$ and this droplet grows to infect $NX+B_{N}$ with probability at least $1-\delta$ in the percolation process with parameter $q$ only using infections inside $NX+B_{cN}$ by Lemma~\ref{lem:general:main}.

Hence, by the Liggett--Schonmann--Stacey theorem \cite{Liggett97} the renormalised process stochastically dominates an independent site percolation with parameter $1-\delta'$ with $\delta'$ which can be made arbitrarily small by choosing $\delta$ sufficiently small. In particular, it is known (from the standard Peierls argument, see e.g.\ \cite{Grimmett99}) that the probability that there is no contour (self-avoiding closed path) of open sites around $0$ decays exponentially. Yet, if such a contour exists in a renormalised box of size $a>c$, we know that $0\in[A\cap B_{2aN}]$. Indeed, since the family is not trivial subcritical, the renormalised site $NX+B_N$ for $X$ in the contour becomes infected using $A\cap (NX+B_{cN})$ and the union of these sets for all $X$ in the contour is enough to infect the origin. To see this, simply use the fact that there exists an unstable direction and that the BP process inside the infected contour behaves as though everything outside the contour is infected. Thus, $\theta_m(q')$ decays exponentially in $m$, since $N$ is a constant. Hence, $q'\ge\qct$, concluding the proof of \eqref{eq:qct:decomposition}.

Let us now consider a non-subcritical family and show that $\qct=0$. Fix $q>2\varepsilon$. It is not hard to see (e.g.\ by repeating the proof from~\cite{Bollobas15}) that a sufficiently large droplet is very likely to grow using a density $\varepsilon$ of infections to infect an entire cone of fixed opening depending only on $\varepsilon$ and $\cU$ (see Figure~7 of~\cite{Bollobas15}). We can then repeat the renormalisation above using this input instead of Lemma~\ref{lem:general:main} to obtain that there is exponential decay at $q$ and thereby $\qct=0$.
\end{proof}
\begin{rem}
\label{rem:exp:decay}
Note that we also proved that $\qct$ is the critical probability of exponential decay: for each $q>\qct$
\[\liminf_n\frac{-\log\theta_n(q)}{n}>0,\]
while this fails for $q<\qct$. Moreover, since the family is not trivial, the exponential decay of the absence of a renormalised contour of radius $n$ implies also exponential decay of $\Pr_q(\tau_0\ge n)$ for $q>\qct$.
\end{rem}
\begin{rem}
In fact, using droplets contained between two parallel lines (see Figures~5 and~7 of \cite{Bollobas15}) instead of a cone with strictly positive opening one can obtain a slightly stronger characterisation of $\qct$ only involving one of the left or right critical densities at each endpoint of the semi-circle.
\end{rem}

\section{Critical densities of oriented percolation}
\label{sec:OP}
In this section we determine the critical densities of the simplest subcritical BP model---OP. This is established in order to be used in conjunction with Theorem~\ref{th:general} in the next section to deduce information about other models. Interestingly, although determining critical densities corresponds to studying the phase transition of OP with an absorbing boundary condition (in a restricted region), this problem does not seem to have been thoroughly studied. The only case which we are aware of that has been considered~\cite{Frojdh01} is the symmetric one---$u=\pi$, for which the result, as we shall see, is that the transition is the same as on the entire plane.

Let us recall a few classical results from OP theory all of which can be found up to minor modifications in Durrett's review~\cite{Durrett84} (see also \cites{Durrett80,Liggett05,Durrett83,Gray80}). We will not redo most of the proofs, as they will be discussed in more detail for GOP in an upcoming work by Szab\'o and the author~\cite{Hartarsky20GOP} and since they have appeared numerous times in the literature in slightly modified forms.

Recall that OP is defined by $\cU=\{U\}=\{\{(-1,1),(1,1)\}\}$. For the sake of convenience, in this section we parametrise in terms of $p=1-q$---the density of healthy (open) sites, so that $\Pr_p$ still denotes the product Bernoulli measure such that each site is open with probability $p$. For the rest of this section we consider only the sublattice of $\Z^2$ generated by $U$ without further mention. Denote by $x\to y$ for $x$ and $y$ in $\Z^2$ the event that there exist $x_0,\ldots,x_N$ with $x_0=x$, $x_N=y$, $x_{i}-x_{i-1}\in U$ and $x_i$ open for $0<i\le N$, that we call an \emph{OP path} (from $x$ to $y$). Let
\[r_n=\sup \left\{x\in\Z,\,\exists y\le 0,\,(y,0)\to(x,n)\right\}\]
be the \emph{right edge} with the convention $\sup\varnothing=-\infty$.
\begin{lem}
\label{lem:OP:alpha}
There exists a function $\alpha:[0,1]\to[-\infty,1]$ called \emph{edge speed} with the following properties.
\begin{enumerate}
\item For any $p$ we have $\Pr_p$-a.s.
\[r_n/n\to \alpha(p)=\inf_n\Ex_p[r_n/n]=\lim_n\Ex_p[r_n/n].\]
\item $\alpha$ is strictly increasing on $\left[\pcop,1\right]$.
\item $\alpha$ and continuous on $\left[\pcop,1\right]$ with $\alpha\left(\pcop\right)=0$, $\alpha(1)=1$ and $\alpha(p)=-\infty$ for $p<\pcop$.
\end{enumerate}
\end{lem}
The first equalities and the a.s.\ limit are proved as in~\cite{Liggett05}, following~\cites{Durrett84,Durrett80}. The other assertions are proved exactly as in~\cite{Durrett84}. We will use this definition of $\alpha$ in the remainder of the paper. The contour argument used in~\cite{Durrett84} to prove the continuity of $\alpha$ (together with the Borel-Cantelli lemma) actually gives the following.
\begin{lem}
\label{lem:OP:path}
For all $p>\pcop$ and $a<\alpha(p)$ we have that with positive probability there exists an infinite OP path $((a_i,i))_{i\in\N}$ with $a_0=0$ and $\inf_n a_n/n\ge a$.
\end{lem}
The next Lemma can be proved exactly like Theorem~7 of~\cite{Griffeath81} (see also~\cite{Durrett84}).
\begin{lem}
\label{lem:OP:exp:decay:supercrit}
If $a>\alpha(p)$, then for some $\gamma>0$
\[\Pr_p(r_n\ge an)\le e^{-\gamma n}.\]
\end{lem}
The following bound on $\alpha$ will only be used in the next section.
\begin{lem}\label{lem:OP:alpha:bound}
For all $p\in[0,1]$ we have
\[\alpha(p)\le \frac{p^3+p-1}{p^3-2p^2+3p-1}.\]
\end{lem}
\begin{proof}
The two-paragraph argument of Section~2 of \cite{Gray80} adapts immediately to give that $\alpha^{-1}(a)$ is larger than the root of the equation
\[(p^3-p^2+2p-1)/(p-p^2)=\frac{1+a}{1-a}.\]
Rephrasing this we obtain exactly the desired inequality.
\end{proof}

Let $\psi$ be the composition of the tangent, the inverse of $\alpha$ and finally $1-\cdot$
\[\psi:[-\pi,-3\pi/4]\cup[-\pi/4,0]\xrightarrow{|\tan|}[0,1]\xrightarrow{\alpha^{-1}}\left[\pcop,1\right]\xrightarrow{1-\cdot}[0,\qc].\]

Putting the preceding facts together we obtain the critical densities of OP.
\begin{thm}\label{th:diff:OP}
The critical density of $\cU=\{U\}=\{\{(1,1),(-1,1)\}\}$ is given by
\[d_u(\cU)=\begin{cases} 0, & u\in[-3\pi/4,-\pi/4]\\
1-\pcop=\qc, & u\in[0,\pi]\\
\psi(u),&\text{otherwise}.
\end{cases}\]
For bidirectional OP $\cU'=\{U,-U\}$, where $-U=\{(-1,-1),(1,-1)\}$, the critical densities are $d_u(\cU')=d_u(\cU)\wedge d_{-u}(\cU)$. One also has $d_u^0=d_u^\pm=d_u$ for all $u$ in both cases.
\end{thm}
\begin{rem}
\label{rem:diff:OP}
If the OP rule is rather $\tilde U=\{(x,y),(z,t)\}$ with the two linearly independent vectors (sites), let $L\in GL_2(\R)$ be such that $L\cdot \tilde U=U=\{(-1,1),(1,1)\}$ and $\det L>0$. Then the critical densities are also transformed via $d_u^{\{\tilde U\}}=d_{u'}^{\{U\}}$, where $u'$ is the direction of $(L(u-\pi/2))+\pi/2$.
\end{rem}
\begin{proof}[Proof of Theorem~\ref{th:diff:OP}]
If $u\in(-3\pi/4,-\pi/4)$ we have nothing to prove, as the directions are unstable. By symmetry it suffices to treat $u\in[-\pi/4,\pi/2]$, so fix one such direction and let $\tilde q=\qc$ if $u\in(0,\pi/2]$ and $\psi(u)$ otherwise. Notice that $\alpha(1-\tilde q)=-\tan(u)$ in the latter case and $0$ in the former one.

Let $q<\tilde q$. By Lemmas~\ref{lem:OP:alpha} and~\ref{lem:OP:path} we know that with positive probability there exists an infinite OP path of healthy sites starting at $0$ not intersecting $\H_u$. This proves that $q\le d_u^\theta$ for all $\theta$, so $q\le d^0_u\le d_u^{\pm}\le d_u$ and the same inequalities hold for $\tilde q$.

Conversely, let $q>\tilde q$. Then by Lemmas~\ref{lem:OP:alpha} and~\ref{lem:OP:exp:decay:supercrit}
\[\Pr_q(0\not\in[(A\cap B_n)\cup V_{u-\theta,u+\theta}])\]
decays exponentially for $\theta>0$ small enough, so that $d_u^0\le d_u^{\pm}\le d_u\le q$. Thus, with the inequalities from the previous case we obtain
\[d_u=d_u^{\pm}=d_u^{0}=\tilde q.\]

Now consider bidirectional OP. It is clear that $0$ remaining healthy for this process is equivalent to $0$ remaining healthy for the family $\{U\}$ and for the family $\{-U\}$, both of which are simply OP. Moreover, these two events are independent conditionally on the state of $0$ (as the oriented paths occur in the upper and lower half-planes respectively). Thus, the critical densities are indeed obtained as claimed.
\end{proof}

\begin{rem}
In order to be able to usefully apply Corollary~\ref{cor:upper:bound} in full generality to any subcritical model, we require a generalisation of Theorem~\ref{th:diff:OP} to GOP. Indeed, every non-trivial subcritical model contains rules corresponding to GOP as explained in Section \ref{subsec:models}. The proof of Theorem~\ref{th:diff:OP} remains unchanged for GOP, provided we have all the ingredients needed, Lemmas \ref{lem:OP:alpha}--\ref{lem:OP:exp:decay:supercrit}. In an upcoming work Szab\'o and the author~\cite{Hartarsky20GOP} explain how those are established.
\end{rem}

\section{Applications of the upper bound for bootstrap percolation}
\label{sec:app}
The most natural and easy way to use Corollary~\ref{cor:upper:bound}, which we call \emph{basic bound}, is for subfamilies consisting of only one rule:
\begin{equation}
\label{eq:basic:bound}
\qc(\cU)\le\qct(\cU)\le\inf_{C\in\cC}\sup\min_{U\in\cU}d_u(\{U\}),
\end{equation}
since the r.h.s.\ terms correspond to OP treated in the previous paragraph or similarly behaved GOP. In principle this approach includes the trivial one consisting of using $\qcu\le\min_{U\in\cU}q_{\mathrm{c}}^{\{U\}}$, but also allows better estimates. 

We give two illustrative applications of the general bound of Corollary~\ref{cor:upper:bound}. The first one follows from the basic bound given by single rule subfamilies as outlined above, while the second one is more subtle.

\subsection{The basic bound---the DTBP model}
Our first example is DTBP. We improve the upper bound of~\cite{Balister16} as asked in their Question~17 by proving Theorem~\ref{th:BBPS:family}.
\begin{figure}
\begin{center}
\begin{tikzpicture}[line cap=round,line join=round,>=triangle 45,x=12.0cm,y=4.0cm]
\draw[->,color=black] (0,0) -- (1,0);
\draw[->,color=black] (0,0) -- (0,1);
\draw[color=black] (1,-10pt) node[left] {\footnotesize $5\pi/4$};
\draw[color=black] (0.75,-10pt) node {\footnotesize $\pi$};
\draw[color=black] (0.5,-10pt) node {\footnotesize $3\pi/4$};
\draw[color=black] (0.25,-10pt) node {\footnotesize $\pi/2$};
\draw[color=black] (0,-10pt) node[right] {\footnotesize $\pi/4$};
\draw[color=black] (0,0) node[left] {\footnotesize $0$};
\draw[color=black] (0,.68) node[left] {\footnotesize $1-\alpha^{-1}(1/3)$};
\draw[color=black] (0,1) node[left] {\footnotesize $1-\pcop$};
\clip(0,-0.05) rectangle (1,1.05);
\draw [line width=1pt] (0,1)-- (0.5,1);
\draw [line width=1pt] (0.75,0)-- (1,0);
\draw [line width=1pt,dotted,color=ffqqqq] (0.25,0)-- (0.5,0);
\draw [shift={(-1.645,-0.325)},line width=1pt,dotted,color=ffqqqq]  plot[domain=0.17:0.76,variable=\t]({1*1.92*cos(\t r)+0*1.92*sin(\t r)},{0*1.92*cos(\t r)+1*1.92*sin(\t r)});
\draw [shift={(1.65,-0.32)},line width=1pt,dash pattern=on 3pt off 3pt,color=qqffqq]  plot[domain=2.38:2.97,variable=\t]({1*1.92*cos(\t r)+0*1.92*sin(\t r)},{0*1.92*cos(\t r)+1*1.92*sin(\t r)});
\draw [shift={(-3.16,-0.45)},line width=1pt]  plot[domain=0.11:0.38,variable=\t]({1*3.94*cos(\t r)+0*3.94*sin(\t r)},{0*3.94*cos(\t r)+1*3.94*sin(\t r)});
\draw [shift={(2.78,-0.04)},line width=1pt,dotted,color=ffqqqq]  plot[domain=2.67:3.12,variable=\t]({1*2.28*cos(\t r)+0*2.28*sin(\t r)},{0*2.28*cos(\t r)+1*2.28*sin(\t r)});
\draw [line width=1pt,dotted,color=ffqqqq] (0.75,1)-- (1,1);
\draw [line width=1pt,dash pattern=on 3pt off 3pt,color=qqffqq] (0.25,1)-- (1,1);
\end{tikzpicture}
\end{center}
\caption{A schematic representation of the critical densities of the three OP rules in DTBP. For symmetry reasons we only depict the domain $u\in[\pi/4,5\pi/4]$.}
\label{fig:DTBP}
\end{figure}
\begin{proof}[Proof of Theorem~\ref{th:BBPS:family}]
Our starting point is \eqref{eq:basic:bound}. Let $U_i$ be the three rules in the update family $\cU$ of DTBP defined in \eqref{eq:def:DTBP}. We can then use Theorem~\ref{th:diff:OP} and Remark~\ref{rem:diff:OP} to determine the r.h.s. We spare the reader the tedious details, but it is elementary to see (see Figure~\ref{fig:DTBP}) that by symmetry there are three local maxima of $u\mapsto\min_i d_u(\{U_i\})$---the one at $\pi/4$ being the global maximum in $[-\pi/4,3\pi/4]$. Hence, Theorem~\ref{th:diff:OP} and Remark~\ref{rem:diff:OP} give
\[\qcu\le d^{\mathrm{OP}}_{L(-\pi/4)+\pi/4}=d^{\mathrm OP}_{\arctan(-1/3)}=1-\alpha^{-1}(1/3),\]
where $L(x,y)=(x,y-x)$ transforms the DTBP rule $\{(-1,-1),(0,1)\}$ into $\{(-1,0),(0,1)\}$, which is OP rotated by $\pi/4$.

In fact, the other two maxima are also easily determined to be at $\pi-\arctan(1/2)$ and $\arctan(1/2)-\pi/2$. They turn out to give the same value as the one at $\pi/4$, but we did not need that for establishing the upper bound. Finally, Lemma~\ref{lem:OP:alpha:bound} provides the desired bound $\alpha^{-1}(1/3)>0.7548$.
\end{proof}
It should be noted that the numerical bound is not optimised, but merely given to testify that the gain is significant. For comparison, based on a refinement of the same method in~\cite{Gray80} in conjunction with the trivial bound $\qcu\le 1-\pcop=1-\alpha^{-1}(0)$ the authors of~\cite{Balister16} obtain $\qcu<0.312$. Even if the exact value of $\pcop$ were known, it follows from rigorous upper bounds that the trivial bound cannot go beyond $0.274$~\cite{Balister93}. Numerical studies indicate that in fact $1-\pcop\approx 0.2945$~\cite{Onody92}. Unfortunately, we have been unable to find appropriate numerical estimates for $\alpha$ for values far from $\qc$ in the literature, so we cannot provide a corresponding result for our bound $1-\alpha^{-1}(1/3)$. Finally, all these values are also to be compared with the numerical estimate $\qcu\approx 0.118$ suggested in~\cite{Balister16}, which indicates that there is much room for further improvements.

\subsection{Motivation of the second-level bound}
Unfortunately, the basic bound \eqref{eq:basic:bound} is not tight. Something more, it is possible to find two rules $U_1$ and $U_2$, such that $d(\{U_1,U_2\})$ is nowhere equal to $d(\{U_1\})\wedge d(\{U_2\})$. Even worse, changing $U_2$ may lead to a change in $d(\{U_1,U_2\})$ while $d(\{U_2\})$ remains the same. We give the following instructive counterexample, along whose lines many can be constructed.

\begin{prop}
\label{prop:counterexample}
Let $\cU_n=\{U_1,U_n\}=\{\{(1,1),(-1,1)\},\{(n,n),(-n,n)\}\}$ for $n\in\N$. Then as $n\to\infty$
\[q_{\mathrm{c}}(\cU_n)\le 1-\inf\left\{p,\,\pcop\le \theta^{\mathrm OP}(p)\right\}+o(1),\]
where $\theta^{\mathrm OP}(p)=\Pr_{1-p}\left(0\not\in[A]_{\{U_1\}}\right)$ is the probability that $0$ is never infected in OP.
\end{prop}
\begin{proof}
Let $B'_{n}=(-n,n]\times(0,n)$ and denote by $\cL=\{n.(m-k,m+k),\;m,k\in\N\}$ the sites concerned by the second rule. Note that for all $x\in\cL$ the boxes $x+B'_n$ are disjoint and disjoint from $\cL$.

Fix $\varepsilon>0$ and $p=1-q$ such that $\theta^{\mathrm OP}(p)<\pcop-\varepsilon$. Let $n$ be large enough so that \[\Pr_q\left(x\not\in\left[A\cap \left(x+B'_n\right)\right]\right)\le \frac{\theta^{\mathrm OP}(p)+\varepsilon}{p}.\] Such an $n$ exists, because the process with initial infection in $x+B'_n$ is identical to the one under the family $\{U_1\}$, which is OP and for which we know that the probability converges to $\theta^{\mathrm OP}(p)/p$.

Then we can associate to each site of $x\in\cL$ an independent Bernoulli($\theta^{\mathrm OP}(p)+\varepsilon$) random variable---the indicator of the event $G_x=\{x\not\in A;\,x\not \in [A\cap (x+B'_n)]\}$. Furthermore, $\{x\not\in[A]\}\subset G_x$ for all $x$. But then in order for $0$ to remain uninfected at all times it is necessary to have an infinite path with steps in $U_n$ starting at $0$ of sites $x$ such that $G_x$ occurs and the probability of this event is $\theta^{\mathrm OP}(\theta^{\mathrm OP}(p)+\varepsilon)=0$, since $\theta^{\mathrm OP}(p)\le \pcop-\varepsilon$.
\end{proof}

This example shows where the main difficulty of the subcritical models resides once GOP is well understood. The division into three universality classes is based on the unstable directions of a model, which can be directly obtained by superimposing the ones for each rule, which are very easy to determine~\cites{Bollobas15,Balister16}. In the refined result based on `difficulties' for critical models~\cite{Bollobas14} Bollobás, Duminil-Copin, Morris and Smith only require information in the finitely many isolated stable directions---their difficulty. In their case, like here, there is no easy way of calculating the difficulty of an isolated stable direction without looking at the entire update family. However, in the simple case of critical models the difficulty happens to be a finite discrete quantity, which invites direct exhaustive computation (which for simple models is readily done by hand), and indeed~\cite{Bollobas14} does not provide a recipe for determining difficulties (it turns out that determining them is NP-hard \cite{Hartarsky18NP}). This is essentially the same problem that we are facing here, but the critical densities of subcritical models being much richer, they are even harder to decompose and analyse.

On the bright side the bound from Corollary~\ref{cor:upper:bound} need not be applied to subfamilies with a single rule. Hence, if we have information on the joint critical densities of, say, all pairs of rules in the family $\cU$, then we can extract a (better) upper bound for $\qcu$. We next turn our attention to an example where this approach works brilliantly, while to apply the basic bound (and obtain worse results) we would need an understanding of GOP.

\subsection{Spiral model}
Indeed, in the Spiral model the subfamilies with two rules happen to be simpler than the single-rule ones when restricted to appropriate half-planes. Recall the definition of its update family $\cU=\{U_1,U_2,U_3,U_4\}$ from \eqref{eq:def:Spiral}. We will use Corollary~\ref{cor:upper:bound} to provide a new proof of one of the main results of \cite{Toninelli08}---Theorem \ref{th:Spiral}.

The proof is nearly complete at this point, but we need one last ingredient---a variant of Lemma~4.11 of~\cite{Toninelli08}, which is actually more naturally expressed in the language of critical densities. This is where one uses the ``no parallel crossing'' property, which Jeng and Schwarz~\cite{Jeng08} identified as essential, as without it the pairs of rules do not simplify to OP.
\begin{lem}[Adaptation of Lemma~4.11 of~\cite{Toninelli08}]
\label{lem:Spiral:TB:nocross}
Let $u\in(\pi/2,5\pi/4)$. Then
\[d_u(\{U_1,U_2\})=d_u(\cU'),\]
where $\cU'=\{\{(0,1),(1,1)\},\{(0,-1),(-1,-1)\}\}$ is a bidirectional OP.
\end{lem}
Since there are a few additional technicalities, we give the proof, focusing on the new parts, so the reader is also invited to consult~\cite{Toninelli08} for more details.
\begin{figure}
\begin{center}
\begin{tikzpicture}[line cap=round,line join=round,>=triangle 45,x=0.25cm,y=0.25cm]
\clip(0,-21) rectangle (22,0);
\fill[fill=black,fill opacity=0.3] (24,2.) -- (1,-21) -- (24,-21) -- cycle;
\draw [line width=1.2pt,color=ffqqqq] (1,-12)-- (1,-20.94);
\draw [line width=1.2pt,color=ffqqqq] (2,-11)-- (1,-12);
\draw [line width=1.2pt,color=ffqqqq] (2,-9)-- (2,-11);
\draw [line width=1.2pt,color=ffqqqq] (3,-8)-- (3,-2);
\draw [line width=1.2pt,color=ffqqqq] (5,-2)-- (4,-1);
\draw [line width=1.2pt,color=ffqqqq] (2,-9)-- (3,-8);
\draw [line width=1.2pt,color=ffqqqq] (4,-1)-- (3,-2);
\draw [line width=1.2pt,color=ffqqqq] (5,-2)-- (6,-2);
\draw [line width=1.2pt,color=ffqqqq] (7,-3)-- (6,-2);
\draw [line width=1.2pt,color=ffqqqq] (6,-4)-- (7,-3);
\draw [line width=1.2pt,color=ffqqqq] (6,-5)-- (6,-4);
\draw [line width=1.2pt,color=ffqqqq] (5,-6)-- (6,-5);
\draw [line width=1.2pt,color=ffqqqq] (6,-7)-- (5,-6);
\draw [line width=1.2pt,color=ffqqqq] (7,-7)-- (6,-7);
\draw [line width=1.2pt,color=ffqqqq] (8,-8)-- (7,-7);
\draw [line width=1.2pt,color=ffqqqq] (8,-9)-- (7,-10);
\draw [line width=1.2pt,color=ffqqqq] (7,-10)-- (7,-11);
\draw [line width=1.2pt,color=ffqqqq] (8,-13)-- (9,-12);
\draw [line width=1.2pt,color=ffqqqq] (9,-12)-- (8,-11);
\draw [line width=1.2pt,color=ffqqqq] (8,-11)-- (7,-11);
\draw (24,2)-- (1,-21);
\draw [line width=1.2pt,color=ffqqqq] (8,-9)-- (8,-8);
\end{tikzpicture}
\end{center}
\caption{An example of the healthy path used in the proof of Lemma~\ref{lem:Spiral:TB:nocross}. The shaded region is entirely infected.}
\label{fig:TB}
\end{figure}
\begin{proof}[Proof of Lemma~\ref{lem:Spiral:TB:nocross}]
Let $u\in I=(\pi/2,5\pi/4)$ and $\pi/2-u<\theta<5\pi/4-u$. We claim that $d^\theta_u(\{U_1,U_2\})=d^\theta_u(\cU')$, which clearly implies the desired result. Let $B=[-n,n]\times[0,cn]$ for some fixed $n\in\N$ sufficiently large and $0\le c\le1$ sufficiently small ($c<\tan(u-\pi/2)$ if $u\in(\pi/2,\pi)$ and the same with $u$ replaced with $u+\theta$) and define the events
\begin{align*}
\cE_1&=\left\{0\not\in\left[\left(A\cup V_{u,u+\theta}\right)\cap B\right]_{\cU'}\right\}\\
\cE_2&=\left\{0\not\in\left[\left(A\cup V_{u,u+\theta}\right)\cap B\right]_{\{U_1,U_2\}}\right\}.
\end{align*}
We argue that $\cE_1\supset\cE_2$. Fix a realisation of $A$ such that $\cE_2\setminus\cE_1$ holds and call the sites in
\[B\setminus\left[\left(A\cup V_{u,u+\theta}\right)\cap B\right]_{\{U_1,U_2\}}\]
\emph{survivors}. Consider the rightmost path $P$ of survivors starting at $0$ with steps in $\{(0,1),(1,1)\}$ (performing the step $(1,1)$ whenever possible and $(0,1)$ only when $(1,1)$ is not possible) and denote $x$ its endpoint. Indeed, $P$ cannot reach the (top) boundary $\partial B$ of $B$, since $\cE_1$ does not hold (survivors are necessarily initially healthy). Since $x$ is a survivor and both $x+(0,1)$ and $x+(1,1)$ are not (otherwise $x$ is not the end of the path), there needs to be a survivor $y$ among $x+(1,0)$ and $x+(1,-1)$ (see Figure~\ref{fig:TB}). In particular, $x\neq 0$, as both $(0,1)$ and $(1,-1)$ are in $\H_u\cap\H_{u+\theta}$.

Since $y$ is a survivor, there has to exist a path of survivors starting at $y$ with steps in $U_2$ reaching $\partial B$. However, it is easy to see (see Figure \ref{fig:TB}) that such a path cannot reach $\partial B$ without intersecting $V_{u,u+\theta}$ or $P$. The former possibility is excluded, since $V_{u,u+\theta}$ are not survivors and the latter one contradicts the choice of $P$ to be the rightmost path of survivors from $0$.

Hence, $\cE_2\subset\cE_1$. A similar reasoning applies with $B$ tilted by $3\pi/4$. Finally, recalling that the region $V_{\pi/2,5\pi/4}$ is entirely infected for all values of $(u,\theta)$ considered, we obtain that
\[0\not\in\left[\left(A\cup V_{u,u+\theta}\right)\cap B_n\right]_{\{U_1,U_2\}}\Longrightarrow0\not\in\left[\left(A\cup V_{u,u+\theta}\right)\cap B_{cn}\right]_{\cU'}.\]
The same implication with $\cU'$ and $\{U_1,U_2\}$ swapped is clear from the fact that $U_1\supset \{(0,1),(1,1)\}$ and $U_2\supset\{(0,-1),(-1,-1)\}$, so we are done by Definition \ref{def:diff}.
\end{proof}
\begin{proof}[Proof of Theorem~\ref{th:Spiral}]
First note that if $q<1-\pcop$, then with probability $1$ there exists a bidirectional $\cU'$ path of healthy sites, which remains healthy also for $\cU$. Therefore, $\qc(\cU)\ge1-\pcop$.

We apply Corollary \ref{cor:upper:bound} to $\cU$ and the families $\cU_1=\{U_1,U_2\}$, $\cU_2=\{U_2,U_3\}$, $\cU_3=\{U_3,U_4\}$ and $\cU_4=\{U_4,U_1\}$. We simply bound $d_u(\cU_1)$ by $1$ for $u\in(-\pi,\pi/2]$ and apply Lemma~\ref{lem:Spiral:TB:nocross} and Theorem~\ref{th:diff:OP} with Remark~\ref{rem:diff:OP} to obtain a bound on $d_u(\cU_1)$ for all $u$. By symmetry the same applies to the other three families up to rotation by $\pi/2$. Hence,
\[\qcu\le\qct(\cU)=\sup_{u\in S^1} d_u\le\!\!\!\sup_{u\in(\pi/2,\pi]}\!\!\!d_u(\cU_1)=\!\!\!\sup_{u\in(\pi/2,\pi]}\!\!\!d_u(\cU')\le\sup_{u\in S^1}d_u(\cU')=1-\pcop.\qedhere\]
\end{proof}

\begin{rem}
It is important to note that Lemma~\ref{lem:Spiral:TB:nocross} does not hold for all directions $u$. It is clear, for example, that when $u=0$ it suffices to have an infinite uni-directional healthy path with steps $\{(1,0),(1,-1)\}$ starting at $0$, which occurs for $q<1-\pcop\neq0= d_{u}(\cU')$. Moreover, the complete Spiral model is \emph{not} equivalent to any (uni- or bi-directional) OP, as it is clear from the fact that it has a discontinuous phase transition~\cite{Toninelli08}, while the phase transition of OP is continuous~\cite{Bezuidenhout90}---BP occurs for both bidirectional OP involved, but not for Spiral. Thus, it is crucial to restrict the process to half-planes where it is equivalent to OP. This idea also underlies the reasoning of~\cite{Toninelli08}.
\end{rem}

\section{Exponential decay and applications}
\label{sec:exp}
In Section \ref{sec:critdens} we characterised $\qct$ in terms of critical densities and proved that it is the critical probability of exponential decay. We now give a second proof of the latter, which makes the conclusions slightly stronger and more manipulable. For instance, if we assume that $\theta_n(q)$ decays like a power law, \eqref{eq:def:pct} gives that for $q<\qct$ the exponent is at least $-2$, which is what we will prove here without assuming that the decay is a power law. Moreover, this method will grant us access to noise sensitivity as well as proving that a one-arm event has strictly positive probability below $\qct$, so that this is indeed a phase transition regardless of whether $\qc=\qct$ or not. Finally, we give a straightforward but important application of exponential decay to the spectral gap and mean infection time of KCM.

As a motivation we start by answering Questions~12 and~14 of Balister, Bollobás, Przykucki and Smith~\cite{Balister16}. We then reprove exponential decay and all the results gathered in Theorem~\ref{th:exp:decay} using the method developed by Duminil-Copin, Raoufi and Tassion~\cite{Duminil-Copin19} and then use a modification of the algorithm we made for the proof of exponential decay to also deduce the results concerning noise sensitivity in Theorem~\ref{th:noise}.
\subsection{Answers to Questions~12 and~14 of~\cite{Balister16}}
Let us begin this section by explaining why, contrary to the expectations of the authors of~\cite{Balister16}, one should expect exponential decay \emph{above} criticality rather than below it, thus answering Question~12 of that paper. As the reasoning will be identical, we also answer Question~14, but before that we will need to establish the following straightforward fact that will serve as a source of examples.
\begin{prop}
\label{prop:GOP:low:pc}
For every $\varepsilon>0$ there exists a GOP model with $\qc\ge1-\varepsilon$.
\end{prop}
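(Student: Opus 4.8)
The plan is to exhibit, for each $\varepsilon>0$, a one-rule update family $\cU=\{U\}$ whose single rule $U$ does not contain the origin in its convex hull (so that the model is a GOP, hence subcritical), but for which the critical probability is as close to $1$ as we like. The natural construction is to take $U$ to be a long ``rake'': a rule consisting of many sites packed into a narrow cone, so that infecting a new site requires a whole cluster of already-infected sites configured in a very specific way. Concretely, I would fix $k$ large and let $U=\{(1,j):0\le j\le k\}$ (or a similar set of $k+1$ collinear-in-a-strip points all having first coordinate $1$); the origin is not in the convex hull of $U$ since $U\subset\H_{e_1}^{c}$ for suitable $c>0$, so all stable directions persist and the model is subcritical, and in fact it is a genuine oriented-percolation-type model.

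The key step is then a simple first-moment / path-counting lower bound on $\qc$ for this family. If $0\notin[A]$ with positive probability fails, i.e. if $0\in[A]$ almost surely, one traces back the infection of the origin through a ``proof tree'': the origin is infected because some translate $x+U\subset A_{t-1}$, each site of which is in turn infected by a further translate of $U$, and so on, terminating at sites of $A_0$. Since each application of the rule requires $k+1$ specific sites to be infected, the number of configurations of the infection history of a given spatial size grows only like a fixed exponential base raised to that size, while the probability that all the required sites lie in $A$ is at most $q$ to the number of sites used. Choosing $k$ large makes the per-site cost $q^{k+1}$ beat the entropy of the branching structure unless $q$ is extremely close to $1$; quantitatively, an argument of the Aizenman--Lebowitz / contour type gives $\qc(\cU)\ge 1-C^{-1/k}$ or $\qc(\cU)\ge 1-f(k)$ with $f(k)\to 0$, and picking $k$ with $f(k)<\varepsilon$ finishes the proof. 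Alternatively, and perhaps more cleanly given the paper's setup, one can compare directly with oriented site percolation: infecting the origin along the rake forces the existence of an oriented path in which every vertex must be ``supported'' by $k+1$ healthy-free constraints, i.e. the complement (healthy sites) must avoid an oriented structure, and the density of healthy sites needed to block this goes to $0$ as $k\to\infty$ by a routine union bound over such structures. Either route yields $\theta(q)>0$ for $q<1-\varepsilon$ once $k=k(\varepsilon)$ is large, hence $\qc\ge 1-\varepsilon$.

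The main obstacle — and really the only subtle point — is making the first-moment bound honest: one must count the infection histories correctly without overcounting, since the bootstrap closure can reach the origin in many overlapping ways, and one must check that the spatial extent of a minimal infection history is genuinely linear in the number of rule-applications used (so that entropy is single-exponential in that number). For the rake rule this is transparent because each application moves the ``frontier'' by exactly one in the $e_1$-direction while spreading a bounded amount in the transverse direction, so a history reaching the origin from $A_0$ within distance $m$ uses $O(m)$ applications and $O(m)$ sites, all within a box of size $O(m)$; the union bound over the at most $e^{O(m)}$ such histories, each of probability at most $q^{(k+1)\cdot\Omega(m)}$, is summable once $q^{k+1}e^{O(1)}<1$. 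Everything else is bookkeeping, and the conclusion $\qc\ge 1-\varepsilon$ is immediate.
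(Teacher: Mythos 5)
Your choice of a GOP rule is fine in spirit: the paper also exhibits a single-rule family, namely $U=\H_{5\pi/4}\cap B_{8N}$ for large $N$, a large solid region in a half-plane. But the mechanism you propose for lower-bounding $\qc$ is different from the paper's, and it has a genuine gap. The paper's argument is a one-line block renormalisation: for fixed $q<1$, declare a renormalised site $X$ \emph{good} if $4N\cdot X+B_N$ contains a healthy site; this happens with probability $1-q^{(2N+1)^2}\to 1$ as $N\to\infty$, the good sites form an i.i.d.\ field, the huge rule $U$ lets one step between healthy witnesses in adjacent good blocks, and so for $N$ large one dominates a supercritical oriented percolation and obtains an infinite healthy $U$-path from $0$ with positive probability. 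No counting is needed.

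Your first-moment bound on ``infection histories'' is the step that does not hold up, and it is precisely the step you flag as ``the only subtle point'' and then wave away. For the rake $U=\{(1,j):0\le j\le k\}$, the estimate ``at most $e^{O(m)}$ histories, each of probability at most $q^{(k+1)\Omega(m)}$'' is not justified. The branches of a proof tree collide massively: the $k+1$ children of $(1,0)$ and of $(1,1)$ already share $k$ sites, and a depth-$d$ history can have $\Theta(d^2k)$ internal nodes while its distinct leaf sites lie on a boundary of size only $\Theta(dk)$, so the probability cost per rule-application is nowhere near $q^{k+1}$. Conversely, the number of possible sets of internal nodes inside the depth-$d$ cone $\bigcup_s\{(s,0),\dots,(s,sk)\}$ is of order $2^{\Theta(d^2k)}$, not single-exponential in $d$. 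You would need to show the union bound really is $<1$ after this overcounting and undercounting are reconciled, and that is not ``bookkeeping'': it is essentially a Peierls estimate for a non-standard oriented graph, and it is not clear it closes. The clean fix even for your rake is the same renormalisation the paper uses — tile $\{i\}\times[jm,(j+1)m)$ with $m\approx k/2$, call a block good if it has a healthy site, and compare to OP on the renormalised lattice — which replaces the delicate entropy computation by an elementary probability estimate $1-q^m\to 1$.
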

\begin{proof}
Fix $1-q=\varepsilon>0$ and let $N=N(\varepsilon)\in\N$ be large enough. Consider the following GOP update family
\[\cU=\{U\}=\{\H_{-\pi/2}\cap B_{8N}\}.\]

We perform the following renormalisation. We call a renormalised site $X\in\Z^2$ good if there is a healthy site in $4N.X+B_N$. The renormalised process clearly yields a percolation with parameter larger than $\pcop$ for $N$ large enough. Indeed, sites are good independently (as $(4NX+B_N)\cap(4NY+B_N)=\varnothing$ for $X\neq Y\in\Z^2$) with probability $1-q^{|B_N|}$. In particular, for $N$ large enough there is a positive probability that the renormalised site $0$ belongs to an infinite OP path of good renormalised sites. But this implies that the ordinary site $0$ belongs to an infinite oriented path of healthy vertices in the graph structure on $\Z^2$ defined by $U$, i.e.\ $0$ remains healthy forever with positive probability. Hence, BP does not occur a.s.\ and $1-\varepsilon=q\le\qc$ as desired.
\end{proof}

\subsubsection{Question~14}
The authors of \cite{Balister16} ask for which subcritical models below criticality there is no infinite path (non-oriented with nearest neighbour steps) of sites in $[A]$ and seem to be in favour of a positive answer for all subcritical BP models. On the one hand, it is indeed \emph{possible} for this scenario to occur and that is the case for the simplest subcritical model---OP. 
\begin{prop}
Consider OP and let $q<\qc$. Then a.s.\ there is no infinite path in $[A]$.
\end{prop}
\begin{proof}
Let $q<\qc$. Recall that the edge speed from Lemma \ref{lem:OP:alpha} satisfies $\alpha(1-q)>\varepsilon$ for some $\varepsilon>0$. It then follows from Lemma \ref{lem:OP:path} that with positive probability there exists an infinite initially healthy oriented path $(a_i,i)_{i\in\N}$ (i.e.\ with $|a_{i+1}-a_i|=1$ for all $i$) starting at $0$ with $\inf a_i/i\ge \varepsilon$. Reflecting this event, we see that with positive probability there exists a bi-infinite oriented path $(a_i,i)_{i\in\Z}$ containing $0$ such that $\inf_{i\neq 0} a_i/|i|\ge\varepsilon$. By ergodicity and symmetry a.s.\ there exist two bi-infinite oriented paths of initially healthy vertices $(a_i)_{i\in\Z}$ and $(b_i)_{i\in\Z}$ such that $a_0<0$, $b_0>0$, $\liminf_{|i|\to\infty}a_i/|i|\ge\varepsilon$ and $\limsup_{|i|\to\infty}b_i/|i|\le -\varepsilon$. As these are oriented paths of healthy sites, they never become infected in the BP process. Moreover, the two paths intersect both in the upper and lower half-planes, $\H_{-\pi/2}$ and $\H_{\pi/2}$, forming a contour of sites in $\Z^2\setminus[A]$ around the origin. In particular, a.s.\ there is no infinite non-oriented path with nearest neighbour steps in $[A]$ containing the origin, which concludes the proof by ergodicity.
\end{proof}

On the other hand, it is obvious that \emph{any} subcritical model with $\qc>\pcsp$ is an example of the opposite behaviour. Minimal such examples are provided by large enough GOP as in Proposition~\ref{prop:GOP:low:pc}, but also by any trivial subcritical model. Indeed, for any $\pcsp<q<\qc$ we a.s.\ have an infinite non-oriented path of initially infected sites.

As we do not give the characterisation asked for in \cite{Balister16}, let us explain why we believe the question to be somewhat extrinsic in the light of the above example and counter-examples. Indeed, the graph structure of $\Z^2$, which defines the infinite path in $[A]$ that \cite{Balister16} asks for, is not relevant to the model itself, defined only by $\cU$. For example if one is to replace $\cU$ by $2\cU$ (e.g.\ in the above examples) the problem is changed non-trivially, while the bootstrap process is really the same. Finally, let us note that we do not expect that $\qc>\pcsp$ (or $\qc\ge\pcsp$) is a necessary condition.

\subsubsection{Question~12}
With the previous reasoning in mind, let us go back to Question~12 of \cite{Balister16} about exponential decay. The question is whether at $q<\qc$ there would be exponential decay in $n$ of the probability of $0$ being connected by sites in $[A]$ to the boundary of $B_n$, to quote \cite{Balister16} ``Here we mean `connected' in the site percolation sense, although other notions of connectedness are also interesting''. 

This is not the case, since in many models there is even no decay at all (the probability of being connected in the non-oriented nearest neighbour sense by sites in $[A]$ to the boundary of $B_n$ may remain bounded away from $0$ as $n\to\infty$ for some $q<\qc$), let alone exponential one. For example consider any subcritical model with $\qc>\pcsp$. Obviously, for $\pcop<q<\qc$ there is positive probability for $0$ to be initially connected to infinity by an infected non-oriented nearest neighbour path, but also with probability $1$ BP does not occur, so some (positive density of) sites remain healthy forever. This is by no means contradictory, since, e.g.\ in the example of Proposition~\ref{prop:GOP:low:pc}, a path, in the graph sense given by the GOP rule and not the non-oriented nearest neighbour one, of healthy sites witnessing that $0$ never becomes infected can easily jump over an infinite infected non-oriented nearest neighbour path in the usual $\Z^2$ sense.

\subsection{Exponential decay---proof of Theorem~\ref{th:exp:decay}}
\label{subsec:exp}
Even though exponential decay below $\qc$ is not always present, we prove that there is exponential decay \emph{above} $\qc$, as it is well known to be the case for OP (this follows e.g.\ from Lemmas \ref{lem:OP:alpha} and \ref{lem:OP:exp:decay:supercrit}). We shall use the recent method of Duminil-Copin, Raoufi and Tassion~\cite{Duminil-Copin19} in order to prove the exponential decay of the one-arm events $E_n$ from Definition \ref{def:En}. In fact, much of the proof of~\cite{Duminil-Copin19} calls for no modification.\footnote{We encourage the reader unfamiliar with that paper to see the second half of the course recording~\cite{Duminil-Copin19course}, which gives precisely the part we need and precisely in the simpler form we use here adapted to product measures, except for Lemma~\ref{lem:exp:decay} we prove.} We will only need the following replacement for their Lemma~3.2.
\begin{lem}
\label{lem:exp:decay}
There exists a randomised algorithm determining $\1_{E_n}$ with maximal revealment
\[\delta\le \frac{3}{n-1}\sum_{k=0}^{n-1}\tilde\theta_k(p).\]
\end{lem}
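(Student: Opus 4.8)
The plan is to mimic the standard randomised-algorithm construction from \cite{Duminil19}, adapted to the path-based event $E_n$. I would first fix a uniformly random target point $y$ on (or near) the boundary $\partial B_n$; more precisely, since $E_n$ asks for a path from $0$ to within distance $C$ of $\partial B_n$ satisfying the monotone infection-time constraint, pick a uniformly random site $y$ among the $\Theta(n)$ sites at distance $\le C$ from $\partial B_n$ along one fixed side (or, as in \cite{Duminil19}, among all the ``slices'' at a random distance $k$ from the boundary). The algorithm then explores the configuration in the bootstrap dynamics starting from the region near $y$ and working inward, revealing bits only as needed to decide whether a valid $U$-path with the infection-time constraint reaches $0$. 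The key structural point, which I would isolate as a lemma, is the ``reflexivity'' already advertised before Definition~\ref{def:En}: if one explores $B_n$ from the boundary inward, then conditionally on the explored region the residual event on the unexplored (central) region is again of the form $E_k$ for a box of smaller radius $k$. This lets us set up a recursion.

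The main computation is the revealment bound. For a fixed site $a\in B_n$, the probability that $a$ is queried is controlled by the probability that, in the exploration started from the random boundary slice at distance $k$, the exploration front reaches $a$ before terminating; by the reflexivity/monotonicity this happens only if an $E$-type path emanates from the slice containing (a point close to) $a$, i.e. if the event $E_{k'}$ occurs in the sub-box of radius $k'\approx$ (distance from $a$ to the boundary slice). Averaging over the uniformly random slice index $k\in\{0,\dots,n-1\}$, the probability that $a$ is revealed is at most $\frac1{n}\sum_{k} \Pr_q(E_{k}\text{ occurs in some box of radius }\le k)$, and by translation invariance and the obvious monotonicity $\tilde\theta_{k}(q)\le\tilde\theta_{k'}(q)$ for $k\le k'$ (which follows since a valid sequence in $B_k$ extends to one in $B_{k'}$), plus a factor $3$ to absorb the $O(1)$ overlaps coming from the constant $C$ and from counting both the path to $0$ and the local neighbourhood structure of $U$, one gets
\[\delta \le \frac{3}{n-1}\sum_{k=0}^{n-1}\tilde\theta_k(q)\,.\]
The factor $3$ and the precise index shift are exactly the kind of routine bookkeeping I would not grind through here; the point is that each revealed site is ``charged'' to at most a bounded number of smaller-scale copies of the event.

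The part I expect to be the real obstacle is making the exploration algorithm genuinely well-defined in this setting, because unlike ordinary percolation the event $E_n$ is phrased in terms of the \emph{bootstrap dynamics} $\tau^{B_n}_{x}$ rather than a static connectivity, and deciding $\tau^{B_n}_{x_i}\ge i$ a priori seems to require global information. The resolution is that $E_n$ only asks for a \emph{lower} bound $\tau^{B_n}_{x_i}\ge i$ along the path, and being healthy at time $i-1$ is a \emph{decreasing} local event that can be certified by revealing only sites within bounded distance and checking no rule of $\cU$ has been satisfied yet — so the ``witness'' for $E_n$ is itself a monotone, locally-checkable object, and the exploration can proceed greedily along candidate $U$-paths from the boundary, revealing at each step only the $O(1)$-neighbourhood needed to extend the path and to certify the time constraint. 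Once this is set up, the independence of the algorithm's internal randomness (the choice of $y$ or $k$) from $\Omega_0$ is immediate, and the revealment estimate goes through as above. The remaining ingredients — that this algorithm indeed determines $\1_{E_n}$, i.e. terminates with the correct value — follow from the reflexivity lemma: when the exploration stops, either a full path to $0$ has been certified, or the unexplored central region is too small to contain the rest of such a path, and in the latter case $E_n$ cannot hold.
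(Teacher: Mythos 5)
Your high-level plan matches the paper's: choose a uniformly random radius $k$, explore outward from the annulus $\partial B_k$, and bound the revealment of a site at distance $l$ from the origin by the probability of a translated copy of $E_{|k-l|}$, then average over $k$. But several of the specifics you wave off as routine bookkeeping are where the argument actually lives, and a few of your assertions are wrong. The monotonicity $\tilde\theta_k(q)\le\tilde\theta_{k'}(q)$ for $k\le k'$ is backwards (reaching $\partial B_{k'}$ is the harder event) and is not used at all; the factor $3$ arises because averaging $\tilde\theta_{|k-l|}$ over $k\in[1,n)$ contributes at most $2\sum_m\tilde\theta_m$ by re-indexing, and one more $\sum_k\tilde\theta_k$ comes from a second phase of the algorithm that you omit. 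That second phase is essential for correctness: after the exploration from $\partial B_k$ stops, if $0$ has been reached you still cannot decide $E_n$, since you only know that a witnessing chain reaches $\partial B_k$ and not that it reaches $\partial B_n$; the algorithm must then reveal the entire remaining box, and this happens precisely when (a copy of) $E_k$ occurs, contributing the extra $\tilde\theta_k$. Your stated stopping criterion does not account for this.

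More importantly, the claim that $\tau^{B_n}_{x_i}\ge i$ ``can be certified by revealing only sites within bounded distance'' is false: to witness it using only the revealed set $V$ one must ensure $x_i$ survives $i-1$ steps even when every unrevealed site is taken to be infected, which generically forces $V$ to contain $\Omega(i)$ sites. Consequently the algorithm cannot proceed by revealing a fixed-size neighbourhood per step along a greedy path. What it actually does is reveal a single site $x_0$ at a time, the explorable frontier being the set of $x_0\notin V$ admitting some chain $x_1,\dots,x_N$ to within distance $C$ of $\partial B_k$ whose intermediate $\tau$-conditions are already \emph{witnessed} by the current $V$; the set $V$ accumulates globally and serves as the certificate. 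With that reformulation, revealing a site $x\in\partial B_l$ in the first phase immediately forces a translate of $E_{|k-l|}$ rooted at $x$, and the bound follows. Your sketch captures the intent but needs this restructuring to be made precise.
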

\begin{proof}
The algorithm is as follows. First pick $k$ uniformly at random in $[1,n-1]$. Let $S\subset B_n$ denote the current set of sites whose state has been checked by the algorithm. We start by revealing (in an arbitrary order) all sites at distance at most $C$ from $\partial B_k$, the boundary of $B_k$, and adding them to $S$. Afterwards we repeat the following. As long as there exists a site $x_0\in B_n\setminus S$ for which there exist an integer $N\ge 1$ and a sequence $x_1,\ldots x_N$ of sites in $S$ verifying the following conditions, the algorithm picks one of the possible $x_0$ arbitrarily and checks its state.
\begin{itemize}
\item $x_N$ is at distance at most $C$ from $\partial B_k$.
\item For all $0<i\le N$ we have $x_{i-1}\in x_i+X$.
\item For all $0<i\le N$ we have that $S$ is a witness of the event $\tau^{B_n}_{x_i}\ge i$.
\end{itemize}
When no such sites remain, the first stage of the algorithm terminates.

If at this point $0\not\in S$, then the algorithm stops. Otherwise, we directly reveal all remaining sites in $B_n$ (in an arbitrary order) and stop.

It is clear that this algorithm does determine $\1_{E_n}$. Indeed, if all sites were revealed, this is vacuously true for any function, while if at the end of the first stage we had $0\not\in S$, we know that $E_k$ does not occur (by definition) and therefore neither does $E_n\subset E_k$ (by extraction of a shorter path from a longer one).

We now proceed to bound its revealment. Fix the value of $k$ and consider a site $x\in \partial B_l$ for some $0\le l\le n$. The events $E_n$ are such that when $x$ is revealed, we are certain that either $E_{|k-l|}$ translated by $x$ occurs or the original event $E_k$ occurs. Hence, its revealment is at most
$\tilde\theta_{|k-l|}(q)+\tilde\theta_{k}(q)$. Taking the average on $k$ this gives a maximal revealment bounded by
\[\frac{3}{n-1}\sum_0^{n-1} \tilde\theta_{l}(p).\qedhere\]
\end{proof}
With this Lemma we are ready to apply the method of~\cite{Duminil-Copin19} to prove Theorem~\ref{th:exp:decay}.
\begin{proof}[Proof of Theorem~\ref{th:exp:decay}]
Let us start by proving the theorem for subcritical models. For the first two items, using Lemma~3.1 of~\cite{Duminil-Copin19} we can repeat the proof of their Theorem~1.2, using the result of \cite{ODonnell05} (instead of its more general form, Theorem~1.1 of~\cite{Duminil-Copin19}) together with our replacement for their Lemma~3.2---Lemma~\ref{lem:exp:decay}---and Russo's formula. Setting 
\[\hat\qc=\sup\left\{q,\,\limsup\frac{\log \sum_0^{n-1}\tilde\theta_k(q)}{\log n}\ge 1\right\},\]
this yields the following.
\begin{itemize}
\item If $q>\hat\qc$, then there exists $c(q)>0$ such that
\[\tilde\theta_n(q)\le \exp(-c(q).n).\]
\item There exists $c>0$ such that for $q<\hat\qc$
\[\tilde\theta(q)\ge c.(\hat\qc-q)>0.\]
\end{itemize}
We next prove that $\hat\qc=\qct$.

First notice that $0\not\in[A\cap B_n]$ implies the existence of a path, in the sense of Definition \ref{def:En}, of sites $x_i$ with $\tau^{B_n}_{x_i}=\infty$ from $0$ to $\partial B_n$ (since there are no finite stable healthy sets) with $x_{i+1}\in x_{i}+X$ and $x_0=0$. But such a path needs to come at distance less than $C/4$ of $\partial B_{n/2}$ at some point $x_k$, so $E_{n/3}$ translated by $x_k$ occurs. Thus, by the union bound
\[\theta_n(q)\le Cn\tilde\theta_{n/3}(q).\]
Therefore, exponential decay for $\tilde\theta_n$ implies exponential decay for $\theta_n$ and thereby $\qct\le \hat\qc$ and for $q>\hat \qc$ we have (for some other $c(q)$)
\[\theta_n(q)\le \exp(-c(q).n).\]

Conversely, we know that for $q<\hat\qc$ the sequence $\tilde\theta_n(q)$ converges to $\tilde\theta(q)>0$. Note that on the event $E_n$ there exists a site $x$ with $\tau^{B_n}_x\ge n/C$ at distance at most $C/4$ from $\partial B_{n/2}$ in the path in Definition \ref{def:En}. Then by the union bound we obtain
\[ Cn\theta_{\sqrt{n}/(2C)}(q)\ge\tilde\theta_n(q)\rightarrow\tilde\theta(q)>0,\]
since $\tau^{B_{C^2n}}_0\ge 4Cn\Rightarrow 0\not\in[A\cap B_{\sqrt{n}}]$. Indeed, since $\cU$ is not supercritical, we can find three or four stable directions containing the origin in their convex envelope, which guarantees that $[B_{\sqrt{n}}]\subset B_{\sqrt{Cn}}$ and inside this box sites will become infected at least one at a time. This proves that $\theta_{n}(q)\ge c/n^2$ for some $c>0$ and thus $q\le\qct$ by \eqref{eq:def:pct}. Hence, $\qct=\hat\qc$ and the proof of the first two items is complete.

Let us turn to the third one. As we already observed the occurrence of $E_n$ implies the existence of a site $x$ within distance $C/4$ of $\partial B_{n/2}$ with $\tau^{B_n}_x\ge n/C$. However, the event $\tau_x\ge n/C$ does not depend on sites outside $B_n$, so that it is the same as $\tau_x^{B_n}\ge n/C$ and the first one's probability is independent of $x\in B_{2n/3}$. Then the union bound gives
\[Cn\Pr_q(\tau_0\ge n/C)\ge \tilde\theta_{n}(q)\rightarrow\tilde\theta(q)>0.\]
Thus, for $q<\qct$ we have $\Pr_q(\tau_0>n)\ge c/n$ for some $c>0$ and in particular the first moment of $\tau_0$ is infinite, which completes the proof for subcritical models.

For $\cU$ critical or supercritical and $q>0$ it suffices to recall from Remark~\ref{rem:exp:decay} that $\Pr_q(\tau_0\ge n)$ decays exponentially, which immediately implies the exponential decay of $\tilde\theta_n(q)$ by the union bound as above and thus completes the proof (the second and third items being void for $\qct=0$).
\end{proof}

\subsection{Noise sensitivity---proof of Theorem~\ref{th:noise}}
We next use the algorithm we have to study noise sensitivity and prove Theorem~\ref{th:noise}. 

The harder part of the proof of Theorem~\ref{th:noise} relies on the following easy consequence of Theorem~1.8 of Schramm and Steif~\cite{Schramm10} and Theorem~1.9 of Benjamini, Kalai and Schramm~\cite{Benjamini99}.\footnote{The results of these papers are stated for $q=1/2$, but they are also valid for any fixed value of $0<q<1$. Moreover, the result does hold for the stronger Definition~\ref{def:noise}.}
\begin{thm}[\cites{Schramm10,Benjamini99}]
\label{th:SS}
Let $G_n$ be a sequence of cylinder events (depending on finitely many sites). If there exists a randomised algorithm determining the occurrence of $G_n$ with maximal revealment $\delta_n\to 0$, then the sequence is noise sensitive.
\end{thm}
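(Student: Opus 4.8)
The plan is to run the standard spectral proof of noise sensitivity in the $q$-biased setting, feeding the revealment hypothesis in through the theorem of Schramm and Steif. First I would write $f_n=\1_{G_n}$ and expand it in the orthonormal Fourier--Walsh basis $\chi_S=\prod_{i\in S}(\omega_i-q)/\sqrt{q(1-q)}$, $S\subset B_n$, as $f_n=\sum_S\hat f_n(S)\chi_S$. Since resampling each coordinate independently with probability $\varepsilon$ multiplies the contribution of $\chi_S$ by $(1-\varepsilon)^{|S|}$ (and kills cross terms), one has $\Ex_q[f_n(\omega)f_n(N_\varepsilon(\omega))]=\sum_S(1-\varepsilon)^{|S|}\hat f_n(S)^2$ and therefore
\[Cov(\1_{x\in G_n},\1_{N_\varepsilon(x)\in G_n})=\sum_{\emptyset\ne S}(1-\varepsilon)^{|S|}\hat f_n(S)^2,\qquad Var(\1_{G_n})=\sum_{\emptyset\ne S}\hat f_n(S)^2\,.\]
Fixing a level $k\in\N$ and bounding $(1-\varepsilon)^{|S|}$ by $1$ when $1\le|S|\le k$ and by $(1-\varepsilon)^k$ when $|S|>k$ gives the truncation
\[\frac{Cov(\1_{x\in G_n},\1_{N_\varepsilon(x)\in G_n})}{Var(\1_{G_n})}\le\frac{\sum_{1\le|S|\le k}\hat f_n(S)^2}{Var(\1_{G_n})}+(1-\varepsilon)^k\,.\]

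Next I would control the first summand via the revealment theorem of \cite{Schramm10} (Theorem~1.8), in its form for general real-valued functions: a randomised algorithm determining $f_n$ with maximal revealment $\delta_n$ forces $\sum_{|S|=j}\hat f_n(S)^2\le\delta_n\,j\,\Ex_q[f_n^2]$ for every $j\ge1$, hence $\sum_{1\le|S|\le k}\hat f_n(S)^2\le\delta_n k^2\,\Pr_q(G_n)$. The key is that the possibly vanishing factor $\Ex_q[f_n^2]=\Pr_q(G_n)$ is matched by the variance: replacing $G_n$ by its complement if necessary (which alters neither the revealment of the algorithm, nor $Var(\1_{G_n})$, nor the covariance) I may assume $\Pr_q(G_n)\le1/2$, so that $Var(\1_{G_n})=\Pr_q(G_n)(1-\Pr_q(G_n))\ge\Pr_q(G_n)/2$ and the first summand is at most $2\delta_n k^2$. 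Thus, given $\varepsilon>0$ and $\eta>0$, I would first pick $k$ with $(1-\varepsilon)^k<\eta/2$ and then $n_0$ with $2\delta_n k^2<\eta/2$ for $n\ge n_0$ (possible since $\delta_n\to0$), obtaining $Cov/Var<\eta$; letting $\eta\downarrow0$ and then quantifying over $\varepsilon$ yields precisely Definition~\ref{def:noise}. The fact that this spectral truncation is equivalent to noise sensitivity, and that the Fourier identities and the revealment theorem are legitimate for the $q$-biased product measure (not just $q=1/2$) and for events of vanishing probability, is exactly what is supplied by Theorem~1.9 of \cite{Benjamini99} together with the footnoted remark.

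I do not anticipate a genuine obstacle here: the whole argument is a short manipulation of the two cited black boxes. The only points that must not be overlooked are bookkeeping ones: (i) the revealment bound has to be used with the factor $\Ex_q[f_n^2]$ retained, since otherwise dividing by $Var(\1_{G_n})$ blows up when $\Pr_q(G_n)\to0$ and the strengthened Definition~\ref{def:noise} (strictly stronger than the classical one for vanishing probabilities) would be out of reach; and (ii) one should verify that the covariance--Fourier identity under resampling, the revealment theorem of \cite{Schramm10}, and the spectral characterisation of noise sensitivity of \cite{Benjamini99} all hold verbatim over the $q$-biased product space and over the $n$-dependent ground sets $B_n$, which they do.
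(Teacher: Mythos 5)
The paper does not actually prove Theorem~\ref{th:SS}; it invokes it as a cited ``easy consequence'' of Theorem~1.8 of~\cite{Schramm10} and Theorem~1.9 of~\cite{Benjamini99}, with a footnote asserting (without detail) that the $q$-biased setting and the strengthened Definition~\ref{def:noise} pose no problem. Your write-up correctly supplies exactly the argument those citations encode: expand $\1_{G_n}$ in the $q$-biased Fourier--Walsh basis, use $\Ex_q[f_n(\omega)f_n(N_\varepsilon(\omega))]=\sum_S(1-\varepsilon)^{|S|}\hat f_n(S)^2$ to get the level-$k$ truncation, and feed in the Schramm--Steif revealment bound $\sum_{|S|=j}\hat f_n(S)^2\le\delta_n\,j\,\Ex_q[f_n^2]$. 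The point you flag as (i) is in fact the crux of why the footnote's claim about Definition~\ref{def:noise} is true: retaining the factor $\Ex_q[f_n^2]=\Pr_q(G_n)$ and pairing it with the harmless complementation that ensures $\Pr_q(G_n)\le 1/2$, hence $Var(\1_{G_n})\ge\Pr_q(G_n)/2$, is precisely what lets the normalised quantity $Cov/Var$ go to $0$ even when $\Pr_q(G_n)\to 0$, where the unnormalised BKS notion would be vacuous. The remaining bookkeeping (same algorithm, same revealment, same $Cov$ and $Var$ for $G_n$ and $G_n^c$; $(1-\varepsilon)^k$ tail; choosing $k$ then $n_0$) is all sound. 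The only cosmetic caveat, not addressed by you or by the paper, is the degenerate case $Var(\1_{G_n})=0$, where the ratio in Definition~\ref{def:noise} is $0/0$ and one simply discards those $n$; this is immaterial. In short: where the paper relies on black boxes, your proposal is a correct and complete unpacking of them, and it identifies the one non-obvious normalisation point that makes the strengthened statement hold.
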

The straightforward converses in Theorem~\ref{th:noise}, stated for completeness, follow from the next easy lemma.
\begin{lem}
\label{lem:Pete}
Let $G_n$ be a nested sequence of cylinder events such that $\bigcap_n G_n=G_\infty$ and $0<\Pr_q(G_\infty)<1$. Then $G_n$ are not noise sensitive.
\end{lem}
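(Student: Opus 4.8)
The plan is to show that, for every fixed $\varepsilon\in(0,1)$, the covariance appearing in Definition~\ref{def:noise} stays bounded away from $0$ as $n\to\infty$, while the variance converges to $\Pr_q(G_\infty)(1-\Pr_q(G_\infty))>0$; the quotient then tends to a strictly positive limit, so it cannot vanish, and $G_n$ is not noise sensitive.

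\emph{Step 1: reduction to the fixed event $G_\infty$.} Since $(G_n)$ decreases to $G_\infty$, we have $\Pr_q(G_n)\downarrow\Pr_q(G_\infty)$, so $Var(\1_{G_n})\to\Pr_q(G_\infty)(1-\Pr_q(G_\infty))>0$ and $\Pr_q(G_n\setminus G_\infty)=\Pr_q(G_n)-\Pr_q(G_\infty)=o(1)$. Write $y=N_\varepsilon(x)$, whose marginal law is again $\Pr_q$, and expand $\1_{G_n}=\1_{G_\infty}+\1_{G_n\setminus G_\infty}$. Every term in $Cov(\1_{x\in G_n},\1_{y\in G_n})$ that picks up at least one factor $\1_{G_n\setminus G_\infty}$ is non-negative and bounded by $\Pr_q(G_n\setminus G_\infty)$, and $\Pr_q(G_n)^2=\Pr_q(G_\infty)^2+o(1)$; hence $Cov(\1_{x\in G_n},\1_{y\in G_n})=Cov(\1_{x\in G_\infty},\1_{y\in G_\infty})+o(1)$. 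It therefore suffices to prove that $Cov(\1_{x\in G_\infty},\1_{N_\varepsilon(x)\in G_\infty})>0$.

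\emph{Step 2: a fixed non-degenerate event is not noise sensitive.} Condition on the pair $(R,z)$, where $R\subset\Z^2$ is the random set of resampled coordinates (each coordinate lying in $R$ independently with probability $\varepsilon$) and $z$ is the common restriction of $x$ and $y=N_\varepsilon(x)$ to $R^c$. Given $(R,z)$, the configurations $x$ and $y$ coincide off $R$ and, on $R$, are conditionally independent with the same law; consequently $Cov(\1_{x\in G_\infty},\1_{y\in G_\infty}\mid R,z)=0$, and with $\phi=\Pr_q(x\in G_\infty\mid R,z)$ we get $Cov(\1_{x\in G_\infty},\1_{N_\varepsilon(x)\in G_\infty})=Var(\phi)\ge0$. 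Suppose $Var(\phi)=0$, i.e. $\Pr_q(G_\infty\mid\cF_{S^c})=\Pr_q(G_\infty)$ for law-of-$R$ almost every $S$, where $\cF_T$ denotes the $\sigma$-algebra generated by the coordinates in $T$. For any finite $F\subset\Z^2$ the event $\{S:S\cap F=\emptyset\}$ has positive probability, so there is such an $S$ with $\cF_F\subset\cF_{S^c}$, whence $\Pr_q(G_\infty\mid\cF_F)=\Pr_q(G_\infty)$ for every finite $F$; by L\'evy's zero--one law $\1_{G_\infty}=\Pr_q(G_\infty)$ almost surely, contradicting $0<\Pr_q(G_\infty)<1$. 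Hence $Var(\phi)>0$.

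\emph{Conclusion and main obstacle.} Combining the two steps, $Cov(\1_{x\in G_n},\1_{N_\varepsilon(x)\in G_n})\to Var(\phi)>0$ while $Var(\1_{G_n})\to\Pr_q(G_\infty)(1-\Pr_q(G_\infty))$, so the quotient in Definition~\ref{def:noise} does not tend to $0$ and the sequence is not noise sensitive. The only genuinely delicate point is ruling out $Var(\phi)=0$ in Step~2; if one prefers to avoid the zero--one law, an alternative is to let $\varepsilon\to0$ and use that $\Ex_q\bigl|\1_{G_\infty}(N_\varepsilon(x))-\1_{G_\infty}(x)\bigr|\to0$ — which holds for any $L^1$ function by approximation with cylinder functions — so that $Cov(\1_{x\in G_\infty},\1_{N_\varepsilon(x)\in G_\infty})\to Var(\1_{G_\infty})>0$ and hence positivity holds for all sufficiently small $\varepsilon$, which already suffices since noise sensitivity would require the limit to be $0$ for \emph{every} $\varepsilon$.
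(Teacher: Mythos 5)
Your proposal is correct, and the core of it (Steps 1 and 2) is a genuinely different argument from the paper's. The paper also begins by noting $Var(\1_{G_n})\to Var(\1_{G_\infty})>0$, but then uses the $L^2$-convergence $\1_{G_n}\to\1_{G_\infty}$ and the contractivity of the noise operator to reduce to a \emph{fixed cylinder} event $G_{n_\delta}$, and finally lets $\varepsilon\to 0$ so that $Cov(\1_{x\in G_{n_\delta}},\1_{N_\varepsilon(x)\in G_{n_\delta}})$ is close to $Var(\1_{G_{n_\delta}})$; this shows the double limit $\lim_{\varepsilon\to0}\liminf_n$ of the ratio equals $1$. You instead reduce to the \emph{infinite} event $G_\infty$, decompose the covariance via conditioning on the resampled set $R$ and the unresampled bits $z$ to get $Cov=Var(\phi)$ with $\phi=\Pr_q(G_\infty\mid R,z)$, and rule out $Var(\phi)=0$ by L\'evy's zero--one law. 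This buys you something slightly stronger: the ratio is bounded away from $0$ for \emph{every} fixed $\varepsilon\in(0,1)$, not just as $\varepsilon\to0$. Two small points worth making explicit: (i) your Step 2 takes $R\subset\Z^2$ whereas Definition~\ref{def:noise} resamples only on $B_n$, so you are implicitly using that enlarging the resampled set decreases the covariance (immediate from the Fourier expansion $\sum_{T\neq\emptyset}\hat f(T)^2(1-\varepsilon)^{|T\cap S|}$, but it should be said); (ii) the Fourier view also gives $Var(\phi)>0$ directly, since $Var(\1_{G_\infty})>0$ forces some $\hat{\1}_{G_\infty}(T)\neq 0$ with $T\neq\emptyset$ and then $Var(\phi)\ge\hat{\1}_{G_\infty}(T)^2(1-\varepsilon)^{|T|}>0$, which sidesteps the zero--one law. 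Your alternative route in the final paragraph — approximating $\1_{G_\infty}$ by cylinder functions and sending $\varepsilon\to0$ — is essentially the paper's argument in disguise.
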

\begin{proof}
Firstly, $Var(\1_{G_n})\to Var(\1_{G_\infty})\in (0,1/4]$. Secondly, $\1_{G_n}\xrightarrow{L^2}\1_{G_\infty}$, so that for any $\delta>0$ there exists $n_\delta$ such that for all $n\ge n_\delta$ we have $\|\1_{G_n}-\1_{G_{n_\delta}}\|_{L^2}<\delta$. Finally, for any $\varepsilon>0$ the function $f\mapsto (x\mapsto \Ex[f(N_\varepsilon(x)) |x])$ is an $L^2$ contraction, so that for all $n\ge n_\delta$ we also have $\|\1_{N_\varepsilon(x)\in G_n}-\1_{N_\varepsilon(x)\in G_{n_\delta}}\|_{L^2}<\delta$. These three facts combined imply that it is sufficient to show that for any $\delta>0$ small enough and any $\varepsilon>0$ small enough depending on $\delta$ it holds that $Var(\1_{G_{n_\delta}})-Cov(\1_{x\in G_{n_\delta}},\1_{N_\varepsilon(x)\in G_{n_\delta}})<\delta$. But this is the case, as $G_{n_\delta}$ is a cylinder event, so that for $\varepsilon$ small enough $\Pr_q(\1_{x\in G_{n_\delta}}\neq \1_{N_\varepsilon(x)\in G_{n_\delta}})<\delta$. Hence,
\[\lim_{\varepsilon\to 0}\liminf_{n\to\infty}\frac{Cov(\1_{x\in G_n},\1_{N_\varepsilon(x)\in G_n})}{Var(\1_{G_n})}=1,\]
which concludes the proof by Definition~\ref{def:noise}.
\end{proof}
\begin{rem}
The consequences of Lemma~\ref{lem:Pete} can also be deduced easily from \cite{Benjamini99}*{Theorem~1.4}.
\end{rem}

\begin{proof}[Proof of Theorem~\ref{th:noise}]
Fix $0<q<1$. First assume that $\theta(q)>0$. Then by Lemma~\ref{lem:Pete} we have that the events $0\not\in[A\cap B_n]$ are not noise sensitive and then Theorem~\ref{th:SS} proves that no low-revealment algorithm exists. The proof in the case $\tilde\theta(q)>0$ that the events $E_n$ are not noise sensitive is analogous. Assume, on the contrary, that $\tilde\theta(q)=0$. Then Lemma~\ref{lem:exp:decay} provides an algorithm with revealment $\delta_n\to 0$, which completes the proof of the first two items of Theorem~\ref{th:noise}.

Finally, assume that $\theta(q)=\tilde\theta(q)=0$. Since $\theta(q)=0$ we also have $\Pr_q(\tau_0\ge n)\to 0$. Fix $\varepsilon>0$ and let $n$ be large enough so that we can find $n/C>k_0>C$ with $k_0<\varepsilon/(64C\Pr_q(\tau_0\ge n/C))$ and $\frac{2}{k_0}\sum_{m=0}^{2k_0}\tilde\theta_m(q)<\varepsilon$. Denote by $H_k$ the event that there exists $x$ at distance at most $C$ from $\partial B_{k}$ such that $\tau_x^{B_n}>n/C$.  Then by the union bound $\Pr_{q}(H_k)< 16Ck \Pr_q(\tau_0\ge n/C)<\varepsilon$ for $k<4k_0$.

We perform the same algorithm as in the proof of Lemma~\ref{lem:exp:decay}, but with $k$ chosen uniformly in $[3k_0,4k_0)$. When the first stage (exploration) of the algorithm stops we check if $H_k$ occurs, which is indeed known (witnessed by the set of inspected sites $S$). If it does, then we simply check all the remaining sites to determine if $0\in[A\cap B_n]$. The probability that this last step occurs is exactly $\Pr_{q}(H_k)<\varepsilon$. If $H_k$ does not occur, we know that $0\in[A\cap B_n]$ (since there are no finite stable healthy sets). We can then bound the revealment similarly to what we did in Lemma~\ref{lem:exp:decay}---we consider a site $y\in\partial B_l$ and take cases depending on its position. If $l\ge5k_0$, the revealment is at most $\varepsilon+\tilde\theta_{l-4k_0}(q)\le\varepsilon+\tilde\theta_{k_0}(q)<2\varepsilon$ and similarly for $l<2k_0$. For $2k_0\le l< 5k_0$ we average on $k$ as before to obtain a revealment bounded by $\varepsilon+\frac{2}{k_0}\sum_{m=0}^{2k_0}\tilde\theta_m(q)$. Hence, the maximal revealment is indeed bounded by $2\varepsilon$. Then, as previously, Theorem~\ref{th:SS} gives that $0\in[A\cap B_n]$ is noise sensitive, which concludes the proof.
\end{proof}

\subsection{Spectral gap and mean infection time of KCM}
To conclude our discussion of exponential decay, we turn to its applications to the KCM defined at the end of the introduction. Cancrini, Martinelli, Roberto and Toninelli~\cite{Cancrini08} proved the positivity of the spectral gap above $\qc$ for several specific models including OP, whose KCM counterpart is known as the North-East model. They also proved that the result holds for any model under an unhandy additional condition. We now use Theorem~\ref{th:exp:decay} together with their results to prove that for all KCM the gap is positive above $\qct$ and $0$ below and the mean infection time of the origin is finite and infinite respectively. It is very interesting to note that we will use the exponential decay of $\tilde\theta_n$ and not $\theta_n$, which does not suffice.

In order to link the spectral gap and the mean infection times we need the following simple facts from~\cite{Martinelli19} and~\cite{Cancrini09}.
\begin{lem}[Lemma~4.3~\cite{Martinelli19}, Theorem~4.7~\cite{Cancrini09}]
\label{lem:MT}
For all $0<q<1$ the mean infection time of the origin in the BP and the corresponding stationary KCM processes satisfy
\[\delta\Ex^{\mathrm{BP}}_{q}[\tau_0]\le\Ex^{\mathrm{KCM}}_{q}[\tau_0]\le\frac{T_{\mathrm{rel}}(q)}{q},\]
where $T_{\mathrm{rel}}$ is the inverse spectral gap of the KCM and $\delta>0$ is a sufficiently small constant.
\end{lem}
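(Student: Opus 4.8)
The two inequalities are of entirely different character, so I would establish them separately; each is in fact an instance of a standard argument in the KCM literature (Lemma~4.3 of~\cite{Martinelli19} and Theorem~4.7 of~\cite{Cancrini09}, respectively), and the real work is assembling them in the present notation. Throughout, $\mu=\Pr_q$ is the reversible law of the KCM, $\cE$ its Dirichlet form, and I read $\tau_0$ in both processes as the hitting time of $\{\sigma(0)=1\}$ from $\mu$ conditioned on the origin being healthy; for bootstrap percolation this differs from $\Ex_q[\tau_0]$ only by bounded factors, which I absorb into $\delta$.

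\textbf{The upper bound} $\Ex_{\mathrm{KCM},q}[\tau_0]\le T_{\mathrm{rel}}(q)/q$ is the textbook comparison between a hitting time and the spectral gap. Setting $B=\{\sigma,\ \sigma(0)=1\}$, so $\mu(B)=q$, I would first observe that for any $f$ vanishing on $B$ Cauchy--Schwarz gives $\Ex_\mu[f]^2\le\mu(B^{\mathrm c})\Ex_\mu[f^2]$, whence $\mathrm{Var}_\mu(f)\ge\mu(B)\Ex_\mu[f^2]$ and so the Dirichlet eigenvalue on $B^{\mathrm c}$ obeys $\lambda_B\ge\mu(B)/T_{\mathrm{rel}}(q)$. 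Then $\Pr_\mu(\tau_0>t)=\langle\1_{B^{\mathrm c}},e^{-t\cL_{B^{\mathrm c}}}\1_{B^{\mathrm c}}\rangle_\mu\le e^{-\lambda_B t}\mu(B^{\mathrm c})$ for the generator $\cL_{B^{\mathrm c}}$ killed on $B$, and integrating in $t$ and dividing by $\mu(B^{\mathrm c})$ gives $\Ex_{\mathrm{KCM},q}[\tau_0]\le1/\lambda_B\le T_{\mathrm{rel}}(q)/q$. In infinite volume I would run this through the Dirichlet form on $L^2(\mu)$ directly, or pass to a limit over finite boxes, exactly as in~\cite{Cancrini09}.

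\textbf{The lower bound} $\Ex_{\mathrm{KCM},q}[\tau_0]\ge\delta\,\Ex_q[\tau_0]$ rests on the KCM being dominated by a monotone continuous-time bootstrap process whose infection time in turn dominates, up to a constant, the discrete-time one. Realise the KCM via i.i.d.\ rate-one Poisson clocks and i.i.d.\ uniforms $u_x\in[0,1]$ read at the rings; from the same data and the same initial configuration I run the monotone dynamics $\xi_t$ in which a ring at $x$ sets $\xi_t(x)=1$ as soon as some rule of $\cU$ is entirely infected in $\xi_{t^-}$ and never heals. A one-line induction over the rings yields $\sigma_t\subseteq\xi_t$ for all $t$ (the KCM constraint is weaker than the monotone one, and the KCM never creates an infection $\xi$ lacks), so $\tau_0^{\mathrm{KCM}}\ge T_0:=\inf\{t,\ 0\in\xi_t\}$ and it suffices to bound $\Ex[T_0]$ below. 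Conditioning on $A_0$ and writing $k_x=\tau_x^{\mathrm{BP}}(A_0)$, the case $k_0=\infty$ is trivial ($T_0=\infty$), so assume $k_0<\infty$ and trace the infection of $0$ in $\xi$ backwards: following the rule $\xi$ actually used, then the member of that rule with largest $\tau^{\mathrm{BP}}$, and iterating, one produces a chain $0=z_0,z_1,\dots,z_{j^\ast}$ ending in $A_0$ with $\tau^{\mathrm{BP}}_{z_j}\ge k_0-j$ (hence $j^\ast\ge k_0$) along which the infection times $T_{z_j}$ strictly decrease and every gap $T_{z_j}-T_{z_{j+1}}$ is at least the relevant residual clock time, i.e.\ a conditionally $\mathrm{Exp}(1)$ variable. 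Summing the gaps telescopes to $T_0$, so, conditionally on $A_0$, $T_0$ stochastically dominates a sum of $k_0$ independent $\mathrm{Exp}(1)$'s, giving $\Ex[T_0\mid A_0]\ge k_0$; taking expectations proves the claim.

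\textbf{The main obstacle} is minor: both halves are routine, the upper bound being the standard spectral hitting-time estimate and the lower bound the usual KCM-versus-monotone-bootstrap coupling together with the elementary fact that each bootstrap generation costs a positive expected continuous-time increment. The only points needing genuine care are reconciling the precise definition of $\tau_0$ in the two dynamics so the inequality compares like with like, and the filtration bookkeeping in the backward-chain extraction (the gaps must be revealed in increasing time order so that the residual clock times are genuinely fresh exponentials); both are already carried out in~\cite{Martinelli19,Cancrini09}, and I would simply adapt them.
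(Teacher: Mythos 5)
The paper itself does not prove this lemma --- it is imported verbatim from Theorem~4.7 of~\cite{Cancrini09} (upper bound) and Lemma~4.3 of~\cite{Martinelli19} (lower bound) --- so the task is really to reconstruct the cited arguments, and you do so essentially correctly. Your upper bound is exactly the standard spectral hitting-time estimate of~\cite{Cancrini09}: $\lambda_B\ge\mu(B)\,\mathrm{gap}=q/T_{\mathrm{rel}}$ from the variance inequality for functions vanishing on $B$, then the killed-semigroup representation of $\Pr_\mu(\tau_0>t)$ and integration. Your lower bound has the right skeleton: the coupling $\sigma_t\subseteq\xi_t$ with the monotone continuous-time bootstrap process is correct and monotone in the constraint, and the backward chain with $\tau^{\mathrm{BP}}_{z_j}\ge k_0-j$, hence length at least $k_0$, is the right combinatorial object. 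The handling of the conditioning conventions via $\Ex_q[\tau_0^{\mathrm{BP}}]=(1-q)\,\Ex_q[\tau_0^{\mathrm{BP}}\mid 0\notin A]$ is also fine.

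The one step I would not let pass as written is the claim that, given $A_0$, the variable $T_0$ stochastically dominates a sum of $k_0$ \emph{independent} $\mathrm{Exp}(1)$'s. The chain $(z_j)$ and the times $s_j=\max_{z\in z_j+U_j}T_z$ are functions of \emph{all} the clocks, including the clock at $z_j$ itself (through the requirement that no earlier ring at $z_j$ saw a completed rule, and through the identity of the rule ``actually used''), so the residual waiting times are attached to a randomly selected, backward-constructed sequence; their joint independence is precisely the assertion to be proved, not bookkeeping, and the exact domination with constant $1$ is more than the lemma claims or than the references establish. The standard way to close this (and the origin of the constant $\delta$) is a union bound over space-time chains: there are at most $|U|^m$ chains of length $m$ with $z_{j+1}\in z_j+U$, each requiring $m$ clock rings in decreasing time order in $[0,t]$, an event of probability at most $t^m/m!$ by a first-moment bound on the ordered tuples of Poisson points; summing over $m\ge k_0$ gives $\Pr(T_0\le k_0/(2e|U|)\mid A_0)\le 1/2$ for $k_0$ at least a constant, hence $\Ex[T_0\mid A_0]\ge\delta k_0$ and the claim, including the case $k_0=\infty$. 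With that substitution (or with the filtration argument genuinely carried out, revealing the clocks in increasing time order), your proof is complete.
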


\begin{figure}
\begin{center}
\begin{tikzpicture}[line cap=round,line join=round,>=triangle 45,x=0.45cm,y=0.45cm]
\fill[fill=black,pattern=vertical lines] (-1.75,0.95) -- (2.23,0.95) -- (-8.31,6.71) -- (-12.3,6.71) -- cycle;
\fill[fill=black,pattern=vertical lines] (-1.51,1.92) -- (2,0) -- (14,0) -- (10.49,1.92) -- cycle;
\fill[line width=0pt,fill=black,fill opacity=0.25] (14,0) -- (16,0) -- (14.25,0.95) -- (12.25,0.95) -- cycle;
\fill[line width=0pt,fill=black,fill opacity=0.25] (-14.05,7.66) -- (-12.3,6.71) -- (-10.3,6.71) -- (-12.05,7.66) -- cycle;
\draw (0,0)-- (16,0);
\draw (0,0)-- (-14.05,7.66);
\draw (-14.05,7.66)-- (1.95,7.66);
\draw (1.95,7.66)-- (16,0);
\draw (2.23,0.95)-- (-8.31,6.71);
\draw (-8.31,6.71)-- (-12.3,6.71);
\draw (-12.3,6.71)-- (-1.75,0.95);
\draw (-1.51,1.92)-- (2,0);
\draw (2,0)-- (14,0);
\draw (14,0)-- (10.49,1.92);
\draw (10.49,1.92)-- (-1.51,1.92);
\draw (-1.75,0.95)-- (2.23,0.95);
\draw (12.25,0.95)-- (14.25,0.95);
\draw (-10.3,6.71)-- (-12.05,7.66);
\draw (-14,7.5) node[anchor=north west] {$\mathbf{b}$};
\draw (15.2,1) node[anchor=north west] {$\mathbf{a}$};
\draw (2,0) node[anchor=north west] {$\varepsilon \mathbf{a}$};
\draw (-2.5,1) node[anchor=north west] {$\varepsilon \mathbf{b}$};
\end{tikzpicture}
\end{center}
\caption{Illustration of the definition of a renormalised site being good. The two hatched parallelograms become infected by the first condition, while the second one concerns the two shaded rhombi.}
\label{fig:gap}
\end{figure}
\begin{figure}
\begin{center}
\begin{tikzpicture}[line cap=round,line join=round,>=triangle 45,x=0.225cm,y=0.225cm]
\fill[fill=black,pattern=vertical lines] (-1.75,0.95) -- (2.23,0.95) -- (-8.31,6.71) -- (-12.3,6.71) -- cycle;
\fill[fill=black,pattern=vertical lines] (-1.51,1.92) -- (2,0) -- (14,0) -- (10.49,1.92) -- cycle;
\fill[line width=0pt,fill=black,fill opacity=0.25] (0,0) -- (1.75,-0.95) -- (3.75,-0.95) -- (2,0) -- cycle;
\fill[line width=0pt,fill=black,fill opacity=0.25] (-2,0) -- (0,0) -- (-1.75,0.95) -- (-3.75,0.95) -- cycle;
\fill[fill=black,pattern=vertical lines] (12.3,-6.71) -- (16.28,-6.71) -- (5.73,-0.95) -- (1.75,-0.95) -- cycle;
\fill[fill=black,pattern=vertical lines] (-17.51,1.92) -- (-14,0) -- (-2,0) -- (-5.51,1.92) -- cycle;
\fill[fill=black,pattern=vertical lines] (-17.75,0.95) -- (-13.77,0.95) -- (-24.31,6.71) -- (-28.3,6.71) -- cycle;
\fill[fill=black,pattern=vertical lines] (12.53,-5.75) -- (16.05,-7.66) -- (28.05,-7.66) -- (24.53,-5.75) -- cycle;
\fill[fill=black,pattern=north west lines] (2,0) -- (0.25,0.95) -- (-1.75,0.95) -- (0,0) -- cycle;
\fill[fill=black,pattern=north east lines] (0,0) -- (2,0) -- (0.25,0.95) -- (-1.75,0.95) -- cycle;
\fill[fill=black,pattern=north west lines] (-14,0) -- (-15.75,0.95) -- (-17.75,0.95) -- (-16,0) -- cycle;
\fill[fill=black,pattern=north east lines] (-16,0) -- (-14,0) -- (-15.75,0.95) -- (-17.75,0.95) -- cycle;
\fill[fill=black,pattern=north east lines] (14.05,-7.66) -- (16.05,-7.66) -- (14.3,-6.71) -- (12.3,-6.71) -- cycle;
\fill[fill=black,pattern=north west lines] (16.05,-7.66) -- (14.3,-6.71) -- (12.3,-6.71) -- (14.05,-7.66) -- cycle;
\draw (0,0)-- (16,0);
\draw (0,0)-- (-14.05,7.66);
\draw (-14.05,7.66)-- (1.95,7.66);
\draw (1.95,7.66)-- (16,0);
\draw (2.23,0.95)-- (-8.31,6.71);
\draw (-8.31,6.71)-- (-12.3,6.71);
\draw (-12.3,6.71)-- (-1.75,0.95);
\draw (-1.51,1.92)-- (2,0);
\draw (2,0)-- (14,0);
\draw (14,0)-- (10.49,1.92);
\draw (10.49,1.92)-- (-1.51,1.92);
\draw (-1.75,0.95)-- (2.23,0.95);
\draw (12.3,-6.71)-- (16.28,-6.71);
\draw (16.28,-6.71)-- (5.73,-0.95);
\draw (5.73,-0.95)-- (1.75,-0.95);
\draw (1.75,-0.95)-- (12.3,-6.71);
\draw (-17.51,1.92)-- (-14,0);
\draw (-14,0)-- (-2,0);
\draw (-2,0)-- (-5.51,1.92);
\draw (-5.51,1.92)-- (-17.51,1.92);
\draw (-17.75,0.95)-- (-13.77,0.95);
\draw (-13.77,0.95)-- (-24.31,6.71);
\draw (-24.31,6.71)-- (-28.3,6.71);
\draw (-28.3,6.71)-- (-17.75,0.95);
\draw (12.53,-5.75)-- (16.05,-7.66);
\draw (16.05,-7.66)-- (28.05,-7.66);
\draw (28.05,-7.66)-- (24.53,-5.75);
\draw (24.53,-5.75)-- (12.53,-5.75);
\draw (2,0)-- (0.25,0.95);
\draw (0.25,0.95)-- (-1.75,0.95);
\draw (-1.75,0.95)-- (0,0);
\draw (0,0)-- (2,0);
\draw (0,0)-- (2,0);
\draw (2,0)-- (0.25,0.95);
\draw (0.25,0.95)-- (-1.75,0.95);
\draw (-1.75,0.95)-- (0,0);
\draw (-14,0)-- (-15.75,0.95);
\draw (-15.75,0.95)-- (-17.75,0.95);
\draw (-17.75,0.95)-- (-16,0);
\draw (-16,0)-- (-14,0);
\draw (-16,0)-- (-14,0);
\draw (-14,0)-- (-15.75,0.95);
\draw (-15.75,0.95)-- (-17.75,0.95);
\draw (-17.75,0.95)-- (-16,0);
\draw (14.05,-7.66)-- (16.05,-7.66);
\draw (16.05,-7.66)-- (14.3,-6.71);
\draw (14.3,-6.71)-- (12.3,-6.71);
\draw (12.3,-6.71)-- (14.05,-7.66);
\draw (16.05,-7.66)-- (14.3,-6.71);
\draw (14.3,-6.71)-- (12.3,-6.71);
\draw (12.3,-6.71)-- (14.05,-7.66);
\draw (14.05,-7.66)-- (16.05,-7.66);
\draw (-30.05,7.66)-- (-14.05,7.66);
\draw (16,0)-- (30.05,-7.66);
\draw (0,0)-- (14.05,-7.66);
\draw (-16,0)-- (-1.95,-7.66);
\draw (-16,0)-- (0,0);
\draw (-1.95,-7.66)-- (14.05,-7.66);
\draw (-16,0)-- (-30.05,7.66);
\draw (-3.75,0.95)-- (-1.75,0.95);
\draw (3.75,-0.95)-- (2,0);
\draw (14.05,-7.66)-- (30.05,-7.66);
\end{tikzpicture}
\end{center}
\caption{Infection procedure used to prove that if the top-right, bottom-right and top-left renormalised sites are good, the bottom-left one becomes entirely infected.}
\label{fig:gap2}
\end{figure}
\begin{proof}[Proof of Theorem~\ref{th:gap}]
Let $\cU$ be a (non-trivial) update family and without loss of generality assume that it contains a rule $U_0\subset\H_{-\pi/2+\delta}\cap\H_{-\pi/2-2\delta}$ for some $\delta>0$ sufficiently small such that $-\pi/2-\delta$ is a rational direction. Fix $q>\qct$ and $\varepsilon(\delta)>0$ and $\eta(\delta,\varepsilon)>0$ sufficiently small. The positivity of the gap is implied by Theorem~3.3 of~\cite{Cancrini08} if we can find a suitable renormalisation satisfying the following (see Definition 3.1~\cite{Cancrini08}).\footnote{The statement in~\cite{Cancrini08} is given for square boxes, but generalises without change.}
\begin{itemize}
\item[(a)] Each renormalised site is good with probability at least $1-\varepsilon$.
\item[(b)] If the renormalised sites $(0,1)$, $(1,0)$ and $(1,1)$ are all good, then
\[[A\cap (\{\mathbf{a},\mathbf{b},\mathbf{a}+\mathbf{b}\}+B')]\supset B',\] where $\mathbf{a}$ and $\mathbf{b}$ are the two base vectors of the renormalisation and $B'$ is the renormalisation box---the parallelogram generated by $\mathbf{a}$ and $\mathbf{b}$ i.e.
\[B'=([0,1)\cdot\mathbf{a})+([0,1)\cdot\mathbf{b}),
\]
where we use the notation $C+D=\{c+d,\;c\in C,d\in D\}$.
\end{itemize}

Set $\mathbf{a}=(n,0)$ and $\mathbf{b}=n(\cos(-\pi+\delta),\sin(-\pi+\delta))$ for $n(\eta)$ sufficiently large. We call the renormalised site $0$ \emph{good} if the following all hold (see Figure~\ref{fig:gap}) and we extend the definition to any site by translation.
\begin{itemize}
\item For all $x$ in the parallelograms $[\varepsilon,1-\varepsilon]\cdot\mathbf{a}+ [0,2\varepsilon]\cdot\mathbf{b}$ and $[\varepsilon ,1-\varepsilon]\cdot \mathbf{b}+ [0,2\varepsilon]\cdot\mathbf{a}$ it holds that $\tau_x^{B'}< \eta n$.
\item For all $x$ in the rhombus $[1-\varepsilon,1)\cdot\mathbf{a}+[0,\varepsilon]\cdot\mathbf{b}$ it holds that $\tau_x^{B'}<\eta n$ if we impose infected boundary condition on $[1,1+2\varepsilon]\cdot\mathbf{a}+[0,1-\varepsilon]\cdot\mathbf{b}$ and healthy on the rest of $\Z^2\setminus B'$. Also the symmetric condition holds for the rhombus $[1-\varepsilon,1)\cdot\mathbf{b}+[0,\varepsilon]\cdot\mathbf{a}$.
\end{itemize}
Condition (b) on the renormalisation is easily checked from this definition, using only the rule $U_0$ (see Figure~\ref{fig:gap2}). Indeed, all hatched regions become infected by the first condition, so that the double hatched rhombi are infected by $U_0$. Finally, the shaded rhombi become infected by the second condition, since the infected boundary condition is already met. The renormalised site considered is then entirely infected using $U_0$. Thus, we only need to check that a renormalised site is good with probability at least $1-\varepsilon$.

Since the conditions concern $O(n^2)$ sites, by symmetry and monotonicity it suffices to observe that
\[\Pr_q\left(\tau_0^{[-C\eta n,C\eta n]\times [0,C\eta n]}\ge\eta n\right)\]
decays exponentially with $n$. Indeed, for this event to occur, there must exist a path of sites $x_0,\ldots, x_{\lceil n\eta\rceil}=0$ with $x_{i}-x_{i+1}\in U_0$ and $\tau_{x_i}^{[-C\eta n,C\eta n]\times [0,C\eta n]}\ge i$ for all $0\le i<\eta n$, which in particular means that $E_{\eta^2 n}$ translated by $x_0$ occurs. Hence, using the first item of Theorem~\ref{th:exp:decay} and the union bound we obtain the desired result and thereby the spectral gap is strictly positive. By Lemma~\ref{lem:MT} this implies that the mean infection time of the KCM is finite.

Finally, by Theorem~\ref{th:exp:decay} for $q<\qct$ the mean infection time of BP is infinite, so Lemma~\ref{lem:MT} shows that in this regime the spectral gap is $0$ and the mean infection time of the KCM is infinite.
\end{proof}

\section{Open problems}
\label{sec:open}
To conclude, let us mention some interesting open problems related to this work besides its direct extensions based on GOP.
\subsection{Simplifications}
We next mention the two prime conjectures which would greatly simplify the statements of our results besides being interesting on their own. We start with the uniqueness of the transition.
\begin{conj}
\label{conj:qc:qct}
For all update families we have
\[\qc=\qct.\]
\end{conj}
We should note that, the Kahn--Kalai--Linial theorem~\cite{Kahn88} tells us that (up to replacing the box by the torus as in~\cite{Balogh03} or adapting the technique of~\cite{Duminil-Copin18}) $\theta_n(q)$ decays at least like $n^{-\varepsilon(q-\qc)}$ above criticality and Theorem~\ref{th:exp:decay} establishes that below $\qct$ it decays at most like $n^{-2}$. As it is commonly the case, it is likely that breaching this gap will prove difficult.

As mentioned earlier if one proves the slightly stronger property
\begin{equation}
\label{eq:conj:stronger}
\tilde\theta(q)>0\Rightarrow\theta(q)>0,
\end{equation}
which implies Conjecture~\ref{conj:qc:qct}, then Theorem~\ref{th:noise} exhausts the noise sensitivity problem for subcritical BP at least for the most natural event $0\in[A\cap B_n]$, which we consider since there is no obvious choice of ``crossing'' event. Indeed, in view of Question \ref{ques:torus} below, it is not clear whether it is relevant to consider the event of complete infection on the torus. Also in the light of Theorem~\ref{th:noise} the converse implication of \eqref{eq:conj:stronger} is not uninteresting at $\qct$.

Secondly, it would be practical to know if the complication of taking limits in Definition~\ref{def:diff} is necessary. We suspect that this is never the case.
\begin{ques}
What are the continuity properties of the function $(u,\theta)\mapsto d_u^\theta$?
\end{ques}

\subsection{Torus}
Although the most natural setting for subcritical models is the infinite volume quantity $\theta$, which is approximated by its restriction to boxes $\theta_n$, another common choice in order to avoid boundary issues is to consider the torus $\bbT_n=(\Z/n\Z)^2$. Indeed, results for critical and supercritical models are meaningful in this setting and are essentially equivalent to the law of the infection time in infinite volume~\cite{Bollobas15}. Yet, for subcritical models the mechanism of infection is rather different---instead of rare large droplets that grow easily we have common droplets which only manage to grow with a lot of help. Owing to this it is not clear how quantities on the torus relate to those on the entire grid. We should mention that most of our results carry through if all is defined on the torus, but it is interesting to note that not even the next question seems to have been answered yet.
\begin{ques}
\label{ques:torus}
Does one have that for all subcritical families
\[\qc=\liminf_{n}\{q,\,\Pr_q([A]_{\bbT_n}=\bbT_n)\ge 1/2\},\]
where the closure is taken with respect to the BP process on the torus and $A$ is a random subset of $\bbT_n$ of density $q$?
\end{ques}

\section*{Acknowledgements}
\addcontentsline{toc}{section}{\numberline{}Acknowledgements}
This work was supported by ERC Starting Grant 680275 MALIG. The author would particularly like to thank Cristina Toninelli for countless discussions, useful suggestions and guidance throughout the preparation of this work. Thanks are also due to G\'abor Pete for useful comments on noise sensitivity and on the original proof of Theorem \ref{th:noise}; to Justin Salez for careful proofreading and to Vincent Tassion for enlightening discussions. Finally, we thank the anonymous referees for extensive, meticulous and useful comments, which helped greatly improve the presentation.

\addcontentsline{toc}{section}{\numberline{}References}
\bibliographystyle{plain}
\bibliography{Bib}
\end{document}